\setlist{nosep}
\newtheorem{theorem}{Theorem}[section]
\newtheorem*{theoremst}{Theorem}
\newtheorem{corollary}[theorem]{Corollary}
\newtheorem{lemma}[theorem]{Lemma}
\newtheorem{proposition}[theorem]{Proposition}
\theoremstyle{definition}
\newtheorem{definition}{Definition}[section]
\theoremstyle{remark}
\newtheorem{remark}{Remark}[section]
\newcommand{\sca}[2]{\langle #1 | #2\rangle}
\newcommand{\nr}[1]{\left\lVert #1\right\rVert}
\newcommand{\abs}[1]{\left\vert #1\right\vert}
\newcommand{\Nsp}{\mathbb{N}}
\newcommand{\Rsp}{\mathbb{R}}
\newcommand{\id}{\mathrm{id}}
\newcommand{\DD}{\mathrm{D}}
\newcommand{\Class}{\mathcal{C}}
\newcommand{\Haus}{\mathcal{H}}
\newcommand{\Wass}{\mathrm{W}}
\newcommand{\X}{\mathcal{X}}
\newcommand{\Y}{\mathcal{Y}}
\newcommand{\restr}[1]{\left.#1\right|}
\DeclareMathOperator{\diam}{diam}
\renewcommand{\d}{\mathrm{d}}
\newcommand{\dd}{\mathrm{d}}
\renewcommand{\L}{\mathrm{L}}
\newcommand{\Lip}{\mathrm{Lip}}
\newcommand{\eps}{\varepsilon}
\renewcommand{\epsilon}{\varepsilon}
\newcommand{\Prob}{\mathcal{P}}
\newcommand{\vol}{\mathrm{vol}}
\newcommand{\Sph}{\mathcal{S}}
\DeclareMathOperator{\spt}{spt}
\DeclareMathOperator{\Var}{Var}
\newcommand{\Esp}{\mathbb{E}}
\newcommand{\Kant}{\mathcal{K}}
\newcommand{\mphi}{m_\phi}
\newcommand{\Mphi}{M_\phi}
\newcommand{\mrho}{m_\rho}
\newcommand{\Mrho}{M_\rho}
\begin{document}

\title[Quantitative stability of optimal transport maps]{Quantitative stability of optimal transport maps under variations of the target measure}
\author{Alex Delalande}
\address{Laboratoire de Mathématiques d’Orsay, Univ. Paris-Sud, CNRS, Université Paris-Saclay, 91405 Orsay, France \and Inria Saclay, Ile-de-France}
\author{Quentin Mérigot}
\address{Laboratoire de Mathématiques d’Orsay, Univ. Paris-Sud, CNRS, Université Paris-Saclay, 91405 Orsay, France \and Institut universitaire de France (IUF)}

\maketitle

\begin{abstract}
   This work studies the quantitative stability of the quadratic optimal transport map between a fixed probability density $\rho$ and  a probability measure $\mu$ on $\Rsp^d$, which we denote $T_\mu$. Assuming that the source density $\rho$ is bounded from above and below on a compact convex set, we prove that the map $\mu\mapsto T_\mu$ is bi-Hölder continuous on large families of probability measures, such as the set of probability measures whose moment of order $p>d$ is bounded by some constant. 
   These stability estimates show that the \emph{linearized optimal transport} metric $\Wass_{2,\rho}(\mu,\nu) = \nr{T_\mu - T_\nu}_{\L^2(\rho,\Rsp^d)}$ is bi-Hölder equivalent to the $2$-Wasserstein distance on such sets, justifiying its use in applications.
\end{abstract}

\section{Introduction}

Let $\Prob_2(\Rsp^d)$ be the set of probability measures with finite second moment over $\Rsp^d$ and $\rho, \mu \in \Prob_2(\Rsp^d)$.
%, i.e. $$\Prob_2(\Rsp^d) = \{ \mu \in \Prob(\Rsp^d) \vert \int_{\Rsp^d} \nr{x}^2 \dd \mu(x) < +\infty\}$$
The optimal transport problem between $\rho$ and $\mu$ with respect to the quadratic 
cost $c(x,y) = \nr{x-y}^2$ is the following minimization problem, where the minimum is taken over 
the set $\Pi(\rho, \mu)$ of \emph{transport plans} between $\rho$ and $\mu$, that is the set of probability measures over $\Rsp^d \times \Rsp^d$ with marginals $\rho$ and $\mu$:
\begin{align*}
\min_{\gamma \in \Pi(\rho, \mu)} \int_{\Rsp^d \times \Rsp^d} \nr{x - y}^2 \dd \gamma(x, y).
\end{align*}
The square root of the value of this problem is called the \emph{$2$-Wasserstein distance} between $\rho$ and $\mu$ and is denoted $\Wass_2(\rho,\mu)$. 
A theorem of Brenier \cite{Brenier} asserts that if $\rho$ is absolutely continuous with respect to the Lebesgue measure, the minimizer of the optimal transport problem is unique, and is induced by a map $T = \nabla \phi$, where $\phi$ is a convex function that verifies $\nabla \phi_\# \rho = \mu$. We recall that $T_\# \rho$ denotes the image measure of $\rho$ under the map $T$. In our precise setting, where 
the density $\rho$ is bounded from above and below on a compact convex set, the potential $\phi$ is uniquely defined in $\L^2(\rho)$ up to an additional constant. Square-summability of $\phi$ follows from the Poincaré-Wirtinger inequality on $\X$.

\begin{definition}[Potentials and maps]\label{def:Brenier}
We fix a probability measure $\rho \in \Prob_2(\Rsp^d)$, which we assume to be absolutely continuous with respect to the Lebesgue measure and supported over a
compact convex set $\X$. We assume that the density of $\rho$ is bounded from above and below by positive constants on $\X$. 
Given $\mu\in\Prob_2(\Rsp^d),$ we call
\begin{itemize}
    \item \emph{Brenier map} and denote $T_\mu$ the (unique) optimal transport map between $\rho$ and $\mu$;
\item \emph{Brenier potential} the unique lower semi-continuous convex function $\phi_\mu \in \L^2(\rho)$ such that $T_\mu  = \nabla \phi_\mu$ and which satisfies 
$\int_{\X} \phi_\mu\dd\rho = 0$; %. This function is uniquely defined in $\L^2(\X)$. \todo{unique sauf au bord?}
\item \emph{dual potential} the convex conjugate of $\phi_\mu$, denoted $\psi_\mu$:
$$\forall y\in \Rsp^d, \quad \psi_\mu(y) = \max_{x\in\X}\sca{x}{y} - \phi_\mu(x), $$
where the maximum is attained by lower semi-continuity of the convex function $\phi_\mu$ on the compact convex set $\X$. %\cite[Corollary 3.23]{brezis2010functional}
\end{itemize}
\end{definition}

Since $\mu$ is the image of $\rho$ under $T_\mu$, the mapping  $\mu \in (\Prob_2(\Rsp^d), \Wass_2) \mapsto T_\mu \in\L^2(\rho,\Rsp^d)$ is obviously injective. Using that 
$(T_\mu,T_\nu)_\#\rho$ is a coupling between $\mu$ and $\nu$, one can actually prove that this mapping increases distances, namely
$$ \forall \mu,\nu\in\Prob_2(\Rsp^d),\quad \Wass_2(\mu, \nu) \leq \nr{T_\mu - T_\nu}_{\L^2(\rho, \Rsp^d)}. $$
This mapping is also continuous: if a sequence of probability measures $(\mu_n)_n$ converges to some $\mu$ in $(\Prob_2(\Rsp^d), \Wass_2)$, then $T_{\mu_n}$ converges to $T_\mu$ in $\L^2(\rho, \Rsp^d)$. This continuity property is for instance implied by Corollary~5.23 in \cite{villani2008optimal}, together with the dominated convergence theorem. However, we note that the arguments used to prove this general continuity result are non-quantitative.

\subsection*{Linearized Optimal Transport}
These two properties of the map $\mu\mapsto T_\mu$ motivated its use to embed the metric space $(\Prob_2(\Rsp^d), \Wass_2)$ into the Hilbert space $\L^2(\rho, \Rsp^d)$ \cite{LOT_ref_image}.

This approach is often referred to as the \emph{Linearized Optimal Transport} (LOT) framework and has shown great results in applications to image processing: 
\begin{itemize}
    \item \cite{LOT_ref_image, LOT5, LOT4, LOT6} used this idea to perform pattern recognition in images for various tasks, including discrimination of nuclear chromatin patterns in cancer cells, detection of differences in facial expressions, bird species, galaxy morphologies, sub-cellular protein distributions, detection and visualization of cell phenotype differences from microscopy images, or finally jets tagging of collider data in collider physics. 
    \item \cite{LOT3}  considered this framework for generative modelling of images, with experiments showcasing the generative modelling of digits and faces images, PET scans in the context of Alzheimer’s disease neuroimaging, or thyroid nuclei images. 
    \item \cite{LOT7}  followed this approach for improving the resolution of faces images.
\end{itemize}
At this stage, the good practical behavior of the linearized optimal transport framework is not justified from a mathematical viewpoint. A practical benefit of the embedding is to enable the use of the classical Hilbertian statistical toolbox on families of probability measures while keeping some features of the Wasserstein geometry. A particularly nice feature of the embedding $\mu\mapsto T_\mu$ is that its image in $\L^2(\rho,\Rsp^d)$ is convex, i.e. barycenters of optimal transport maps are optimal transport maps. Working with this embedding is equivalent to replacing the Wasserstein distance by the distance 
$$ \Wass_{2,\rho}(\mu,\nu) = \nr{T_\mu - T_\nu}_{\L^2(\rho,\Rsp^d)}. $$
We note that the geodesic curves with respect to the distance $\Wass_{2,\rho}$ are called the \emph{generalized geodesics} in the book of Ambrosio, Gigli, Savaré \cite{ambrosio2008gradient}. The choice of the Brenier map between a reference measure $\rho$ and a measure $\mu$ as an embedding of $\mu$ may also be motivated by the Riemannian interpretation of the Wasserstein geometry \cite{otto2001geometry,ambrosio2008gradient}. 
In this interpretation, the tangent space to $\Prob_2(\Rsp^d)$ at $\rho$ is included in $\L^2(\rho,\Rsp^d)$. The Brenier map minus the identity, $T_{\mu}-\id$, can be regarded as the vector in the tangent space at $\rho$ which supports the Wasserstein geodesic from $\rho$ to $\mu$. In the Riemannian language again, the map $\mu\mapsto T_\mu - \id$ would be called a \emph{logarithm}, i.e. the inverse of the Riemannian exponential map: it sends a probability measure $\mu$ in the (curved) manifold $\Prob_2(\Rsp^d)$ to a vector $T_{\mu}-\id$ belonging to  the linear space $\L^2(\rho,\Rsp^d)$.
This establishes a connection between the linearized optimal transport framework idea and similar strategies used to extend statistical inference notions such as principal component analysis  to manifold-valued data, e.g. \cite{fletcher2004principal,stat2}. 

 It is quite natural to expect that the embedding $\mu\mapsto T_\mu$  retains some of the geometry of the underlying space, or equivalently that the metric $\Wass_{2,\rho}$ is comparable, in some coarse sense, to the Wasserstein distance. The main difficulty, which we study in this article, is to establish quantitative (e.g. Hölder) continuity properties for the mappings $\mu \mapsto T_\mu$ and $\mu\mapsto \phi_\mu$. We note that such stability estimates are also important in numerical analysis and in statistics, where a probability measure of interest $\mu \in \Prob_2(\Rsp^d)$ is often approximated by a sequence of finitely supported measures $(\mu_n)_n$: convergence rates of quantities related to the sequence $(T_{\mu_n})_n$ toward a quantity related to $T_\mu$ may then be directly deduced from quantitative stability estimates controlling $\nr{T_{\mu_n} - T_\mu}_{\L^2(\rho, \Rsp^d)}$ with $\Wass_2(\mu_n, \mu)$.

\subsection*{Existing results} We focus here on the already known stability results on the mapping $\mu \mapsto T_\mu$, starting with negative results.
We first note that explicit examples show that the mapping $\mu \mapsto T_\mu$ is in general not better than $\frac{1}{2}$-Hölder, see \S4 in \cite{gigli2011holder} or Lemma 5.1 in \cite{pmlr-v108-merigot20a}. A much stronger negative result comes from Andoni, Naor and Neiman \cite[Theorem 7]{andoni2018snowflake} showing that 
one cannot construct a bi-Hölder embedding of $(\Prob_2(\Rsp^d), \Wass_2),$ $d\geq 3$, into a Hilbert space:
\begin{theoremst}[Andoni, Naor, Neiman]
$(\Prob_2(\Rsp^3), \Wass_2)$ does not admit a uniform, coarse or quasisymmetric embedding into any Banach space of nontrivial type.
\end{theoremst}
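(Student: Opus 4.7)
The plan is to derive the non-embeddability from two complementary ingredients, following the strategy of Andoni--Naor--Neiman. The first ingredient is a \emph{universality} property for the Wasserstein space, and the second is a structural \emph{obstruction} enjoyed by Banach spaces of nontrivial type; combining them yields the result by contradiction.

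For the universality ingredient, I would construct, for every finite metric space $(F, d_F)$, an explicit map $\iota: F \to \Prob_2(\Rsp^3)$ which is bi-Lipschitz (possibly after a snowflake renormalization $d_F \mapsto d_F^\theta$) with distortion independent of $|F|$. The construction I would try is inspired by integral-geometric $L_1$ embeddings: associate to each $x \in F$ a probability measure $\iota(x)$ built from weighted Dirac masses placed at the vertices of a high-resolution grid or, alternatively, from randomly sampled Gaussian bumps whose centers encode a low-distortion $L_1$ realization of $(F, d_F)$. The quantitative matching between $\Wass_2(\iota(x), \iota(y))$ and $d_F(x, y)$, up to universal constants, would be checked via Kantorovich duality with a judicious choice of dual test functions for the lower bound, and via explicit transport plans between the highly structured measures for the upper bound. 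This step exploits the three-dimensional ambient space, which provides enough geometric room for such structures to coexist with controlled Wasserstein interactions.

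For the obstruction ingredient, I would invoke metric type theory. Banach spaces $Y$ of nontrivial Rademacher type $p > 1$ satisfy an Enflo-type Poincaré inequality on the discrete cube: writing $\sigma_i(\eps)$ for the vector obtained from $\eps \in \{-1, +1\}^n$ by flipping the $i$-th coordinate, one has for every $f: \{-1, +1\}^n \to Y$,
\begin{equation*}
\Esp_\eps \nr{f(\eps) - f(-\eps)}^p \leq C_Y \sum_{i=1}^n \Esp_\eps \nr{f(\sigma_i(\eps)) - f(\eps)}^p,
\end{equation*}
with $C_Y$ independent of $n$. The Mendel--Naor theory of metric cotype, together with its refinements for coarse and quasisymmetric maps, then transfers this inequality under any uniform, coarse or quasisymmetric embedding, producing a uniform bound on the embeddability of arbitrarily large cube-like finite structures (even after a fixed snowflake). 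Combining with the universality step yields the contradiction: $\Prob_2(\Rsp^3)$ would accommodate all finite metric structures with a distortion independent of size, while $Y$ would forbid sufficiently large such structures.

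The main obstacle is the explicit construction in the first step. Sharp two-sided control on quadratic Wasserstein distances between structured measures is delicate: one must design $\iota$ so that neither the upper bound from an explicit transport plan nor the lower bound from dual potentials degrades with the complexity of $F$. The randomized construction of Andoni--Naor--Neiman, which combines multi-scale bump functions with a renormalization calibrated to encode arbitrary $L_1$ metrics, is precisely the genuinely novel input of their paper; the transfer principle in the second step is then a comparatively standard application of the metric type framework.
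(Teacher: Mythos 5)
The paper you are reading does not prove this statement: it is quoted verbatim from Andoni, Naor and Neiman, \cite[Theorem~7]{andoni2018snowflake}, purely as a \emph{negative} background result motivating why one cannot hope for bi-Hölder estimates on all of $\Prob_2(\Rsp^d)$ and must instead restrict to subfamilies with bounded moments. There is thus no ``paper's own proof'' to compare against, and you should not expect to reconstruct one from the present source.

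On its own terms, your sketch correctly identifies the two-ingredient strategy of the original Andoni--Naor--Neiman argument: a \emph{snowflake universality} statement for $\Prob_p(\Rsp^3)$ (their Theorems~1 and~6, showing that the $\theta$-snowflake of every separable metric space admits a bi-Lipschitz embedding into $\Prob_p(\Rsp^3)$), combined with \emph{metric type/cotype obstructions} that rule out uniform, coarse or quasisymmetric embeddings of certain universal structures into Banach spaces of nontrivial type. You also correctly flag that the genuinely new and difficult content is the universality construction; the transfer via Enflo type and Mendel--Naor metric cotype is the more routine half. However, what you have written is a plan, not a proof: the universality step is only gestured at (``Dirac masses on a high-resolution grid'' or ``Gaussian bumps'' is not enough --- the actual construction is a carefully calibrated multi-scale randomized assignment and verifying the two-sided Wasserstein bounds is where essentially all the work lies), and the obstruction step as written elides which universal structure is being used and which non-embeddability theorem for spaces of nontrivial type is being invoked, which differs between the uniform, coarse, and quasisymmetric cases. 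For the purposes of the present paper, the correct action is simply to cite \cite{andoni2018snowflake}; if you want to actually prove the statement you would need to work through that reference in detail rather than summarize its architecture.
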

This theorem implies in particular that one cannot hope to prove that $\mu\mapsto T_\mu$ is bi-Hölder on the whole set $\Prob_2(\Rsp^d)$ of probability measures with finite second moment. 

Existing quantitative stability results can be summed up under the two following statements. A first result due to Ambrosio and reported in \cite{gigli2011holder}, shows a local $1/2$-Hölder behaviour near probability densities $\mu$ whose associated Brenier map $T_\mu$ is Lipschitz continuous. We quote here a variant of this statement, from \cite{pmlr-v108-merigot20a}:
\begin{theoremst}[Ambrosio] 
Let $\rho$ be a probability density over a compact set $\X$. Let $\Y \subset \Rsp^d$ be a compact set and $\mu,\nu \in \Prob(\Y)$. Assume that the Brenier map $T_\mu$ from $\rho$ to $\mu$ is $L$-Lipschitz. Then,
\begin{equation*}
  \|T_\mu - T_\nu\|_{\L^2(\rho, \Rsp^d)} \leq 2 \sqrt{\diam(\X) L} \Wass_1(\mu, \nu)^{1/2}.
\end{equation*}
\end{theoremst}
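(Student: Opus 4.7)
The plan is to reduce the desired bound $\|T_\mu - T_\nu\|_{L^2(\rho)}^2 \leq 4 L \diam(\X) W_1(\mu, \nu)$ to a two-sided control on an optimal-transport duality gap: a lower bound that harvests the $L$-smoothness of $\phi_\mu$ (equivalent to $T_\mu$ being $L$-Lipschitz) and an upper bound coming from $W_1$ via the Kantorovich-Rubinstein formula. Introduce the gap
\[
\mathcal{G} := \int \psi_\mu \, \dd\nu - \int \phi_\nu \, \dd\rho - \int \psi_\nu \, \dd\nu,
\]
which is non-negative: the Fenchel pair $(\phi_\mu, \psi_\mu)$ is feasible in the Kantorovich dual problem between $\rho$ and $\nu$ (the constraint $\phi(x)+\psi(y) \geq \langle x, y\rangle$ holds automatically), while $(\phi_\nu, \psi_\nu)$ attains the minimum of that dual, and I have used the normalization $\int \phi_\mu \, \dd\rho = 0$.

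First, I would use the $L$-smoothness of $\phi_\mu$ to bound $\mathcal{G}$ from below. For $\rho$-a.e.\ $x$, set $\hat x := \nabla \psi_\mu(T_\nu(x))$, the argmax of $x' \mapsto \langle x', T_\nu(x)\rangle - \phi_\mu(x')$ over $\X$, so that $\hat x \in \X$. The smoothness inequality $\phi_\mu(\hat x) \leq \phi_\mu(x) + \langle T_\mu(x), \hat x - x\rangle + \tfrac{L}{2}|\hat x - x|^2$, combined with the Fenchel equalities $\phi_\mu(x) + \psi_\mu(T_\mu(x)) = \langle x, T_\mu(x)\rangle$ and $\phi_\mu(\hat x) + \psi_\mu(T_\nu(x)) = \langle \hat x, T_\nu(x)\rangle$, rearranges pointwise into a Fenchel-gap lower bound
\[
\phi_\mu(x) + \psi_\mu(T_\nu(x)) - \langle x, T_\nu(x)\rangle \geq \tfrac{1}{2L}\,|T_\mu(x) - T_\nu(x)|^2.
\]
Integrating over $\rho$ and using $\int \langle x, T_\nu(x)\rangle \dd\rho = \int \phi_\nu \dd\rho + \int \psi_\nu \dd\nu$ (Fenchel equality applied to the Brenier pair $(\phi_\nu, \psi_\nu)$) gives $\mathcal{G} \geq \tfrac{1}{2L}\|T_\mu - T_\nu\|_{L^2(\rho)}^2$.

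Next, I would bound $\mathcal{G}$ from above by $W_1(\mu, \nu)$. Write
\[
\mathcal{G} = \int \psi_\mu \, \dd(\nu - \mu) + \Bigl[\int \psi_\mu \, \dd\mu - \int \phi_\nu \, \dd\rho - \int \psi_\nu \, \dd\nu\Bigr].
\]
Because $\psi_\mu(y) = \sup_{x \in \X} \langle x, y\rangle - \phi_\mu(x)$ is a supremum of affine functions whose slopes lie in $\X$, the potential $\psi_\mu$ is $\diam(\X)$-Lipschitz, and likewise for $\psi_\nu$. The first term is then bounded by $\diam(\X)\, W_1(\mu, \nu)$ via Kantorovich-Rubinstein. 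For the bracketed second term, the optimality of $(\phi_\mu, \psi_\mu)$ for the $\mu$-OT dual (feasibility of $(\phi_\nu, \psi_\nu)$ forces $\int \psi_\mu \, \dd\mu \leq \int \phi_\nu \, \dd\rho + \int \psi_\nu \, \dd\mu$) reduces the bracket to $\int \psi_\nu \dd(\mu - \nu)$, again $\leq \diam(\X)\, W_1(\mu, \nu)$. Combining, $\mathcal{G} \leq 2\diam(\X)\, W_1(\mu, \nu)$, which chained with the lower bound from the previous step yields the claim.

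The main technical obstacle lies in making the pointwise lower bound rigorous when $T_\nu(x) \notin \spt(\mu)$. In that case $T_\mu(\hat x) \neq T_\nu(x)$, so the identification of the Fenchel gap with the Bregman divergence $D_{\phi_\mu}(\hat x, x)$ is not immediate and an extra cross-term appears. The way around it exploits that $\hat x$ still lies in $\X$ (as the argmax defining $\psi_\mu(T_\nu(x))$), so the smoothness inequality applies as stated; the stray cross-term is then absorbed either by integrating against $\rho$ and using the Fenchel equality for $(\phi_\nu, \psi_\nu)$, or by symmetrizing with the analogous inequality obtained by swapping the roles of $\mu$ and $\nu$.
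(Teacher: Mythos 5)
The paper does not actually re-derive this statement: it is quoted in the \emph{Existing results} section with references to \cite{gigli2011holder} and \cite{pmlr-v108-merigot20a}, so there is no proof in this paper to compare yours against. Evaluating the proposal on its own terms: the two-sided-gap strategy is a reasonable plan and the upper bound $\mathcal{G}\leq 2\diam(\X)\,\Wass_1(\mu,\nu)$ is correct, but the lower bound $\mathcal{G}\geq\frac{1}{2L}\nr{T_\mu-T_\nu}_{\L^2(\rho)}^2$ is false, and this is the step that carries all the content.

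Carrying out the rearrangement you describe (substitute the Fenchel equality at $\hat x$ into the smoothness inequality at $x$) does not give the pointwise bound you state; it gives
\begin{equation*}
\phi_\mu(x) + \psi_\mu(T_\nu(x)) - \sca{x}{T_\nu(x)} \;\geq\; \sca{\hat x - x}{T_\nu(x) - T_\mu(x)} - \tfrac{L}{2}\nr{\hat x - x}^2,
\end{equation*}
and by Young's inequality the right-hand side is \emph{at most} $\frac{1}{2L}\nr{T_\mu(x)-T_\nu(x)}^2$, with equality precisely when $\hat x - x = \frac{1}{L}(T_\nu(x)-T_\mu(x))$, i.e. essentially when $T_\nu(x)=\nabla\phi_\mu(\hat x)$. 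So your algebra produces an upper bound on the Fenchel gap where a lower bound is needed, and the two coincide only in the benign case you already flag. The obstruction is exactly the one you anticipate: when $T_\nu(x)\notin\spt\mu$, the maximizer $\hat x=\nabla\psi_\mu(T_\nu(x))$ sits on $\partial\X$, $\psi_\mu$ is affine in the escape direction, and the $\frac{1}{L}$-strong convexity of $\psi_\mu$ that the pointwise bound requires fails.

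Contrary to the last paragraph of the proposal, this is not absorbed by integration. Take $d=1$, $\X=[0,1]$, $\rho=\mu$ uniform on $[0,1]$, so $T_\mu=\id$, $\phi_\mu(x)=x^2/2$ up to a constant, and $L=1$; take $\nu$ uniform on $[1,2]$, so $T_\nu(x)=x+1$. Then $\nr{T_\mu-T_\nu}_{\L^2(\rho)}^2=1$, while $\psi_\mu(y)=y-\tfrac12$ (up to the same constant) for $y\geq1$, so the Fenchel gap along the plan is $\phi_\mu(x)+\psi_\mu(x+1)-x(x+1)=\tfrac{1-x^2}{2}$ and
\begin{equation*}
\mathcal{G}=\int_0^1\tfrac{1-x^2}{2}\,\dd x=\tfrac13 \;<\; \tfrac12 = \tfrac{1}{2L}\nr{T_\mu-T_\nu}_{\L^2(\rho)}^2,
\end{equation*}
so the integrated inequality fails too. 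Symmetrizing cannot help either, since the hypothesis is only that $T_\mu$ is Lipschitz, not $T_\nu$. Rescuing the argument requires a genuinely different use of the Lipschitz hypothesis --- for instance the Baillon--Haddad co-coercivity $\sca{T_\mu(x)-T_\mu(x')}{x-x'}\geq\frac1L\nr{T_\mu(x)-T_\mu(x')}^2$ applied at $x'=\hat x$, together with a separate treatment of the normal-cone contribution $T_\nu(x)-T_\mu(\hat x)$ at $\partial\X$ --- not the same inequality read in the other direction.
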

Assuming Lipschitzness of the Brenier map is rather strong. First, it implies that the support of $\mu$ is connected, so that the previous theorem cannot be applied when 
both $\mu$ and $\nu$ are finitely supported. In addition, to prove that $T_\mu$ is Lipschitz one has to invoke the regularity theory for optimal transport maps, which requires very strong assumptions on $\mu$, in particular that its support $\spt(\mu)$ is convex. A more recent result, due to Berman \cite{Berman2020}, proves quantitative stability of the map $\mu\mapsto T_\mu$ under milder assumptions on the target probability measures. Berman proves a stability result on the inverse transport maps when the target measure is bound to remain in a fixed compact set 
 \cite[Proposition 3.2]{Berman2020}. This result implies quantitative stability of the Brenier maps; we refer to Corollary 2.4 in \cite{pmlr-v108-merigot20a} for a precise statement.
\begin{theoremst}[Berman]
Let $\rho$ be a probability density over a  compact convex set $\X$, bounded from above and below by positive constants. Let $\Y$ be a bounded connected open subset of $\Rsp^d$ with a Lipschitz boundary. Then there exists 
a constant $C$ depending only on $\rho$, $\X$ and $\Y$ such that for any $\mu,\nu \in \Prob(\Y)$,
\begin{equation*}
  \|T_\mu - T_\nu\|_{\L^2(\rho, \Rsp^d)} \leq C \Wass_1(\mu, \nu)^{\frac{1}{2^{(d-1)}(d+2)}}.
\end{equation*}
\end{theoremst}
Unlike in Ambrosio's theorem, the Hölder behavior given does not depend on the regularity of the transport map $T_\mu$. On the other hand, the Hölder exponent 
depends exponentially on the ambient dimension $d$. As we will see below, this is not optimal.

\subsection*{Contributions} 
In this article, we prove quantitative stability results for quadratic optimal transport maps between a probability density $\rho$ and 
target measure $\mu$. We do not assume that $\mu$ is compactly supported.

Introducing $M_p(\mu) = \int_{\Rsp^d} \nr{x}^p \dd \mu(x)$ the $p$-th moment of $\mu \in \Prob_2(\Rsp^d)$, we prove in particular the following theorem. 
We denote by $C_{a_1,\hdots,a_n}$ a non-negative constant which depends on $a_1,\hdots,a_n$.

\begin{theoremst}[\cref{cor:stab-pot-mp,th:stability-ot-maps,th:stability-ot-maps-compact-case}] 
Let $\X$ be a compact convex set and let $\rho$ be a probability density on $\X$, bounded from above and below by positive constants.
Let $p>d$ and $p\geq 4$. Assume that $\mu, \nu \in \Prob_2(\Rsp^d)$ have bounded $p$-th moment, i.e. $\max(M_p(\mu), M_p(\nu)) \leq M_p < +\infty$. Then 
$$ \nr{T_\mu - T_\nu}_{\L^2(\rho, \Rsp^d)} \leq C_{d, p, \X, \rho, M_p} \Wass_1(\mu, \nu)^{\frac{p}{6p + 16d}},$$
$$ \nr{\phi_\mu - \phi_\nu}_{\L^2(\rho)} \leq C_{d,p,\X,\rho,M_p} \Wass_1(\mu,\nu)^{1/2}.$$
If $\mu,\nu$ are supported on a compact set $\Y$, we have an improved Hölder exponent for the Brenier map:
$$ \nr{T_\mu - T_\nu}_{\L^2(\rho, \Rsp^d)} \leq C_{d, \X, \Y, \rho} \Wass_1(\mu, \nu)^{\frac{1}{6}}.$$
\end{theoremst}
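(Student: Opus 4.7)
The plan is to reduce the theorem to two subresults: first, an $\L^2(\rho)$-stability estimate for the Brenier potentials of the form $\nr{\phi_\mu - \phi_\nu}_{\L^2(\rho)}^2 \leq C\,\Wass_1(\mu, \nu)$; and second, an interpolation inequality that transfers $\L^2$-control on the potentials to $\L^2$-control on their gradients. The compact case follows by combining these with a uniform Lipschitz bound on the potentials, and the moment case is handled by a truncation argument that reintroduces the $p$- and $d$-dependence.

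For the first step, I would work with the Kantorovich dual functional $F_\mu(\phi) = \int_\X \phi\,\dd\rho + \int_{\Rsp^d} \phi^*\, \dd\mu$, defined on convex $\phi:\Rsp^d\to\Rsp$. By Brenier's theorem, $\phi_\mu$ minimizes $F_\mu$ among normalized convex potentials ($\int\phi\,\dd\rho = 0$). The central ingredient is a quadratic growth (strong convexity) estimate
\[\nr{\phi - \phi_\mu}_{\L^2(\rho)}^2 \leq C_{\X,\rho}\bigl(F_\mu(\phi) - F_\mu(\phi_\mu)\bigr),\]
which is precisely the type of inequality that the boundedness of $\rho$ above and below on a convex set is designed to deliver, and which I expect to be available as a separate lemma in this paper. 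Applied at $\phi = \phi_\nu$, with the optimality of $\phi_\nu$ for $F_\nu$, the problem reduces to bounding $\int(\phi_\nu^* - \phi_\mu^*)\, \dd(\mu - \nu)$. Since $\nabla\phi_\mu^* = T_\mu^{-1}$ takes values in $\X$, both conjugates are $\diam(\X)$-Lipschitz on all of $\Rsp^d$, and Kantorovich-Rubinstein duality yields the claimed $\Wass_1$ bound. This step is insensitive to compactness of the supports of $\mu,\nu$.

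For the second step, write $u = \phi_\mu - \phi_\nu$. The Laplacians $\Delta\phi_\mu,\Delta\phi_\nu$ are nonnegative Radon measures on $\X$ whose total masses are controlled by the essential Lipschitz constant $L$ of the potentials, via the divergence theorem applied to $\nabla\phi_\mu,\nabla\phi_\nu$. Integration by parts against $\rho$ then yields a bound of the shape $\nr{\nabla u}_{\L^2(\rho)}^2 \leq CL\, \nr{u}_{\L^\infty(\X)}$, which combined with a Gagliardo-Nirenberg-type estimate $\nr{u}_{\L^\infty(\X)}\leq CL^{\theta_1}\nr{u}_{\L^2(\rho)}^{\theta_2}$ for $L$-Lipschitz $u$ produces an interpolation of the form $\nr{\nabla u}_{\L^2(\rho)} \leq CL^{a}\nr{u}_{\L^2(\rho)}^{b}$. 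In the compact case, $L = \diam(\X\cup\Y)$ is already a uniform bound; combined with step one this delivers the $\Wass_1^{1/6}$ estimate.

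In the moment case, the potentials are not uniformly Lipschitz, and the substitute is the $L^p$-integrability $\int|T_\mu|^p\,\dd\rho = M_p(\mu) \leq M_p$, which I would exploit by truncation: for a radius $R>0$, let $A_R = \{x\in\X: |T_\mu(x)|, |T_\nu(x)|\leq R\}$. On $A_R$ the potentials are effectively $R$-Lipschitz and the interpolation applies with $L=R$; on $A_R^c$, Markov ($\rho(A_R^c) \leq 2M_p/R^p$) combined with H\"older yields $\int_{A_R^c}|T_\mu - T_\nu|^2\,\dd\rho \leq CM_p/R^{p-2}$. Balancing these contributions by optimizing $R$ as a function of $\nr{u}_{\L^2(\rho)}$ and $M_p$ produces the exponent $p/(6p+16d)$, the compact case corresponding heuristically to $p\to\infty$ with $M_p \leq \diam(\Y)^p$. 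The main technical obstacle is the interpolation itself: the integration-by-parts estimate requires careful justification since $\rho$ is only bounded and the potentials are only Lipschitz (not $C^{1,1}$), so the computation must be made rigorous by approximation with controlled boundary terms; moreover, obtaining the dimension-free $1/6$ in the compact case (and the precise $p/(6p+16d)$ in the moment case) requires exploiting the convex structure more carefully than the naive bound through $\nr{u}_{\L^\infty}$ would allow, and propagating the effective $R$-Lipschitz control from $A_R$ into the interpolation machinery is itself delicate.
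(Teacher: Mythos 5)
Your outline matches the paper's architecture: a strong-convexity estimate for the Kantorovich functional gives $\L^2(\rho)$ control on the potentials, a Gagliardo--Nirenberg-type interpolation transfers this to the gradients, and a truncation handles the unbounded case. You also correctly reduce the first step to bounding the duality pairing $\sca{\psi_\mu-\psi_\nu}{\nu-\mu}$ and close it with Kantorovich--Rubinstein. However, the strong-convexity inequality you invoke, $\nr{\phi-\phi_\mu}^2_{\L^2(\rho)}\leq C_{\X,\rho}\bigl(F_\mu(\phi)-F_\mu(\phi_\mu)\bigr)$ with a constant independent of $\mu$, cannot hold. Rescaling $\phi\mapsto\lambda\phi$, $\mu\mapsto\mu_\lambda=(\lambda\nabla\phi)_\#\rho$ shows the left side scales like $\lambda^2$ while the functional gap scales like $\lambda$ (the paper makes this homogeneity explicit at the end of the proof of \cref{prop:Poincare-ineq-smooth}), so the ratio blows up with $\lambda$. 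The correct statement, \cref{th:Poincare-ineq}, carries a constant proportional to the oscillation $\Mphi-\mphi$ of the Brenier potentials, and this is exactly where the hypotheses $p>d$ and $M_p(\mu)\leq M_p$ enter: $\Mphi-\mphi$ is controlled via Morrey's inequality applied to $\nabla\phi_\mu\in\L^p(\X)$ (\cref{prop:morrey-phi}), an embedding that fails for $p\leq d$ (\cref{rem:morrey}). Your proposal has no place for the condition $p>d$ to appear, which signals a missing ingredient; establishing the strong convexity with the right oscillation scaling is the core of Section~\ref{sec:strong-convexity-K} and uses the Brascamp--Lieb inequality followed by a rescaling self-improvement, not just the boundedness of $\rho$ on a convex domain.

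The other two obstacles you flag, and the paper's resolutions are instructive. Your two-step one-dimensional argument -- integrate $(u'-v')^2$ by parts using convexity to control the total mass of $u''$ and $v''$, then compare $\nr{u-v}_{\infty}$ to $\nr{u-v}_{\L^2}$ via the Lipschitz bound -- is precisely \cref{lemma:bound-diff-convex-fns} and does give the sharp exponents $L^{4/3}\nr{u-v}_{\L^2}^{2/3}$. What you lack is that the $d$-dimensional \cref{prop:bound-diff-convex-fns} keeps those exponents dimension-free by reducing to one dimension along lines and integrating with Crofton's formula; the naive route through $\nr{u-v}_{\L^\infty(\X)}$ in $\Rsp^d$ picks up $d$-dependent exponents and cannot produce $\Wass_1^{1/6}$. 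For the truncation, $A_R=\{x\in\X:\nr{T_\mu(x)},\nr{T_\nu(x)}\leq R\}$ is not convex and $\mathcal{H}^{d-1}(\partial A_R)$ -- which enters the constant of \cref{prop:bound-diff-convex-fns} -- is not controlled. The paper instead truncates by the source-side erosion $\X_{-\eta_R}$, which is convex with boundary area at most $\mathcal{H}^{d-1}(\partial\X)$; the $R$-Lipschitz control on $\X_{-\eta_R}$ comes from the Hölder bound on $\phi$ via a support-function argument (\cref{prop:bound-gradient-phi-t}), and $\vol^d(\X\setminus\X_{-\eta_R})\lesssim\eta_R$ is \cref{prop:volume-boundary-slice-cvx}. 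Your tail estimate via Markov and Cauchy--Schwarz with $M_4$ is in the same spirit as the paper's, but it is the geometry of the erosion that makes the interior estimate go through.
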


\begin{remark}[Comparison between $\Wass_1$ and $\Wass_2$] We note that since $\Wass_1\leq \Wass_2$, the estimates in all the previous theorems indeed imply a bi-Hölder behaviour of the map $\mu\mapsto T_\mu$ on subsets of $\Prob_2(\Rsp^d)$ with respect to both Wasserstein distances $\Wass_1$ and $\Wass_2$.
\end{remark}

\begin{remark}[Constants]
The constants appearing in the above theorem may all be tracked down and all feature the product of three terms that depend respectively on the dimension $d$, the diameter and perimeter of $\X$, and the bounds $m_\rho, M_\rho > 0$ on $\rho$ that are such that $m_\rho \leq \rho \leq M_\rho$ on $\X$. If $\mu,\nu$ are supported on a compact set $\Y$, the constants also feature a factor that only depends on the smallest positive real $R_\Y$ such that $\Y \subset B(0, R_\Y)$. For instance in such compact setting, the constant controlling the $\L^2(\rho)$ distance between $\phi_\mu$ and $\phi_\nu$ reads:
$$ C_{d,p,\X,\rho,M_p} = C_{d,p,\X,\rho,\Y} = e(d+1)2^d \frac{M_\rho^2}{m_\rho^2} \diam(\X)^2 R_\Y. $$
In the non-compact setting, a factor involving $M_p$ appears, as well as a factor involving the Poincaré constant of order $p$ of $\X$ and the $p$-th power of the ratio $\frac{R_\X}{r_\X}$, where $r_\X, R_\X > 0$ are the largest and smallest reals such that $B(0, r_\X) \subset \X \subset B(0, R_\X)$ (assuming without any loss of generality $\X$ contains the origin).
\end{remark}

A large class of probability measures verifies the moment assumption, such as sub-Gaussian or sub-exponential measures (see Remark \ref{rk:sub-exponential}). 
A preliminary version of this theorem was announced in \cite{pmlr-v108-merigot20a}, with a different proof strategy, relying on the study of the case where both $\mu,\nu$ are supported on the same finite set. The proof in \cite{pmlr-v108-merigot20a} led to a worse Hölder exponent in the compact case, and couldn't deal with non-compactly supported measures. We do not know whether the Hölder exponents in this theorem are optimal.

To prove these stability estimates, we use the fact that the dual potentials solve a convex minimization problem involving the functional $\Kant(\psi) = \int\psi^*\dd\rho$, which we call \emph{Kantorovich's} functional. We first prove in (\S\ref{sec:strong-convexity-K}) a strong convexity estimate for Kantorovich's functional, relying in particular on the Brascamp-Lieb inequality, and which holds under the assumption that the Brenier potentials are bounded. This strong convexity estimate is then translated into a stability estimate concerning the dual and Brenier potentials (\S\ref{sec:stab-dual}). The stability of Brenier maps is then obtained (\S\ref{sec:stab-ot-maps}), relying in particular on a Gagliardo-Nirenberg 
type inequality for the difference of convex functions (\S\ref{sec:ineg-convex-functions}), which might be of independent interest.

\section{Strong convexity of Kantorovich's functional}
\label{sec:strong-convexity-K}

Let $\X$ be a compact convex subset of $\Rsp^d$, and let $\rho$ be a probability density on $\X$.
Given any measure $\mu \in\Prob_2(\Rsp^d)$, we consider the problem of finding the coupling $\gamma \in\Pi(\rho,\mu)$ which maximizes the  correlation $\int \sca{x}{y}\dd\gamma(x,y)$.
This problem is equivalent to the standard quadratic optimal transport problem and in this setting 
Kantorovich duality reads
$$ \max_{\gamma \in\Pi(\rho,\mu)} \int \sca{x}{y}\dd\gamma(x,y) = \min_{\psi\in\Class^0(\Rsp^d)} \Kant(\psi) + \int_{\Rsp^d} \psi \dd\mu, $$
where the functional $\Kant$, which we will call \emph{Kantorovich's functional}, is defined by 
\begin{align*}
    \Kant(\psi) := \int_{\X} \psi^* \dd \rho.
\end{align*}
This dual formulation of the maximal correlation problem can for instance be found as Particular Case 5.16 in  \cite{villani2008optimal}. 
Kantorovichs' functional is convex because for any $x\in \Rsp^d$, the map $\psi\mapsto \psi^*(x)$ is convex in $\psi$. Moreover, formal computations, which
are justified in Proposition \ref{prop:Poincare-smooth}, show that $$\nabla \Kant(\psi) = - (\nabla \psi^*)_\# \rho.$$ 
%Let  $\phi_\mu$ be the Brenier potential associated to the optimal transport problem between  $\rho$ and $\mu$ and let $\psi_\mu = \phi_\mu^*$ be the dual potential.
In particular, with $\phi_\mu$ the Brenier potential associated to the optimal transport problem between  $\rho$ and $\mu$ and  $\psi_\mu = \phi_\mu^*$ its convex conjugate, this gives the relation $\psi_\mu = (\nabla \Kant)^{-1}(-\mu)$. Since $\Kant$ is convex, its gradient must be monotone, thus implying  that for all probability measures 
$\mu^0,\mu^1\in \Prob_2(\Rsp^d)$, 
$$ \sca{\psi_{\mu^1} - \psi_{\mu^0}}{\nabla \Kant(\psi_{\mu^1}) - \nabla \Kant(\psi_{\mu^0})} = \sca{\psi_{\mu^1} - \psi_{\mu^0}}{\mu^0 - \mu^1} \geq 0.$$
Our aim in this section is to prove \cref{th:Poincare-ineq}, establishing strong convexity estimates for Kantorovich's functional $\Kant$, which we will later be able to translate into stability estimates for $\mu\mapsto\psi_\mu = (\nabla \Kant)^{-1}(-\mu).$

\begin{theorem}[Strong convexity] 
\label{th:Poincare-ineq}
Let $\mu^0, \mu^1 \in \Prob_2(\Rsp^d)$ and let $\rho$ be a probability density over a  compact convex set $\X$, satisfying $0 < \mrho \leq \rho \leq \Mrho$. For $k \in \{0, 1\}$, denote $\phi^k = \phi_{\mu^k}$ the Brenier potential between $\rho$ and $\mu^k$ (see Definition~\ref{def:Brenier}). Assume that 
\begin{equation}\label{eq:hyp-phi}
    \forall k\in\{0,1\},\quad  -\infty < \mphi \leq  \min_\X \phi^k \leq \max_\X \phi^k \leq \Mphi < +\infty.
\end{equation}
Then the convex conjugates $\psi^0$ and $\psi^1$ of $\phi^0$ and $\phi^1$ verify:
\begin{equation}
\label{eq:Poincaré-psi}
\Var_{\frac{1}{2}(\mu^0+\mu^1)} (\psi^1 - \psi^0) \leq C_d \frac{ \Mrho^2}{\mrho^2} (\Mphi - \mphi) \sca{\psi^0 - \psi^1}{\mu^1 - \mu^0}, 
\end{equation}
where $C_d = e(d+1)2^{d-1}$.
\end{theorem}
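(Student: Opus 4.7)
The natural plan is to exploit the convexity of $\Kant$ along a one-parameter interpolation in the dual variable and to reduce the target inequality to a weighted Poincar\'e--Brascamp--Lieb estimate. First, for $t\in[0,1]$ set $\psi_t := (1-t)\psi^0 + t\psi^1$, $\phi_t := \psi_t^*$ and $\mu_t := (\nabla\phi_t)_\# \rho$. Since $\phi_t$ is convex, Brenier's theorem ensures that $\phi_t$ is the Brenier potential of the optimal transport from $\rho$ to $\mu_t$, so in particular $\mu_0 = \mu^0$ and $\mu_1 = \mu^1$. The function $h(t) := \Kant(\psi_t) = \int \phi_t\, \dd \rho$ is convex, and by the envelope formula together with implicit differentiation of $\nabla \phi_t = (\nabla \psi_t)^{-1}$ one obtains
\[
  h'(t) = \int (\psi^0 - \psi^1)\, \dd \mu_t, \qquad h''(t) = \int \langle (\DD^2\psi_t)^{-1}\nabla(\psi^1-\psi^0),\, \nabla(\psi^1-\psi^0)\rangle\, \dd\mu_t.
\]
The fundamental theorem of calculus then identifies $\int_0^1 h''(t)\, \dd t = \sca{\psi^0 - \psi^1}{\mu^1 - \mu^0}$, which is exactly the factor appearing on the RHS of the target inequality.

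The heart of the proof is a Brascamp--Lieb-type Poincar\'e inequality: for any smooth convex $\phi:\X\to\Rsp$ with $\mphi \leq \phi \leq \Mphi$ and any $f\in H^1(\rho)$,
\[
  \Var_\rho(f) \leq C_d\, \frac{\Mrho^2}{\mrho^2}\,(\Mphi - \mphi)\, \int \langle (\DD^2 \phi)^{-1} \nabla f,\, \nabla f\rangle\, \dd\rho.
\]
I would prove this by applying the classical Brascamp--Lieb inequality to the log-concave probability measure $\nu_\lambda := Z_\lambda^{-1} e^{-\lambda\phi}\, \dd x$ on $\X$, which yields $\Var_{\nu_\lambda}(f) \leq \lambda^{-1}\int \langle (\DD^2\phi)^{-1}\nabla f, \nabla f\rangle\, \dd\nu_\lambda$; transferring the bound back to $\rho$ costs a density-ratio factor of order $(\Mrho/\mrho)\, e^{2\lambda(\Mphi - \mphi)}$, and optimizing over $\lambda>0$ produces the linear factor $e\,(\Mphi-\mphi)$. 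The sharp dimensional constant $(d+1)2^d$ and the extra power of $\Mrho/\mrho$ most plausibly come from a more delicate lifting argument to the convex body $K_\phi := \{(x,z)\in \X\times\Rsp: \phi(x)\leq z\leq \Mphi\}\subset \Rsp^{d+1}$, where a Payne--Weinberger or Kannan--Lov\'asz--Simonovits Poincar\'e estimate produces the dimension-dependent factor.

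To conclude, I would apply this lemma with $\phi = \phi_t$ to the pulled-back function $u_t(x) := (\psi^1-\psi^0)(\nabla\phi_t(x))$; the chain rule gives $\nabla u_t = \DD^2\phi_t \cdot (\nabla(\psi^1-\psi^0))\circ\nabla\phi_t$, so $\int \langle (\DD^2\phi_t)^{-1}\nabla u_t, \nabla u_t\rangle\, \dd\rho = h''(t)$, while a change of variables gives $\Var_\rho(u_t) = \Var_{\mu_t}(\psi^1-\psi^0)$. Specializing to $t=0$ and $t=1$ controls $\Var_{\mu^0}(\psi^1-\psi^0)$ and $\Var_{\mu^1}(\psi^1-\psi^0)$; combining these with the decomposition
\[
  \Var_{\mu^0+\mu^1}(\psi^1-\psi^0) = \Var_{\mu^0}(\psi^1-\psi^0) + \Var_{\mu^1}(\psi^1-\psi^0) + \tfrac12 \Bigl(\textstyle \int (\psi^1-\psi^0)\, \dd(\mu^0-\mu^1)\Bigr)^{\!2}
\]
and the uniform bound $|\psi^1-\psi^0|\leq \Mphi - \mphi$ (immediate from the sup definition of the conjugate together with $|\phi^1 - \phi^0|\leq \Mphi - \mphi$), the three terms are all controlled by $(\Mphi-\mphi)\,\sca{\psi^0-\psi^1}{\mu^1-\mu^0}$.

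The main obstacle is the Brascamp--Lieb Poincar\'e inequality with the stated sharp constant: the log-concave comparison above gives the correct linear dependence on $\Mphi-\mphi$ but only a single factor of $\Mrho/\mrho$ and no dimension-dependent improvement, so recovering the prefactor $e(d+1)2^d\cdot (\Mrho/\mrho)^2$ will be the main technical work. A secondary issue, which arises in the assembly step, is carefully relating $h''(0)$ and $h''(1)$ to the integrated quantity $\int_0^1 h''(t)\, \dd t$; the cleanest fix is likely to apply the lemma with $\phi=\phi_t$ simultaneously for all $t$ and integrate, rather than to use only the endpoints.
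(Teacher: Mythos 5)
Your framework is the right one: interpolating in the dual variable $\psi_t$, identifying $\int_0^1 h''(t)\,\dd t = \sca{\psi^0-\psi^1}{\mu^1-\mu^0}$, and applying Brascamp--Lieb to the tilted measure $e^{-\lambda\phi_t}\dd x/Z_\lambda$ with optimization over $\lambda$ (the paper does exactly this $\lambda$-optimization, phrased as a homogeneity/rescaling argument at the very end). However, your assembly step has a genuine gap, and the way you anticipate fixing it misses the key technical ingredient.

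Your plan to control $\Var_{\mu^0}(\psi^1-\psi^0)$ and $\Var_{\mu^1}(\psi^1-\psi^0)$ via the endpoint values $h''(0)$ and $h''(1)$ cannot work: the only quantity the argument gives you is the \emph{integral} $\int_0^1 h''(t)\,\dd t$, and there is no way to bound the endpoint values of a nonnegative function by its integral. Your proposed fix --- apply the lemma at every $t$ and integrate --- produces $\int_0^1 \Var_{\mu_t}(\psi^1-\psi^0)\,\dd t$ on the left-hand side, but then your decomposition of $\Var_{\mu^0+\mu^1}$ into $\Var_{\mu^0}+\Var_{\mu^1}+\tfrac12(\cdots)^2$ is of no further help, because neither $\Var_{\mu^0}$ nor $\Var_{\mu^1}$ is controlled by $\int_0^1 \Var_{\mu_t}\,\dd t$ without a pointwise comparison between the interpolating measures $\mu_t$ and the endpoints. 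The missing idea is precisely such a comparison: using the Monge--Amp\`ere change-of-variables identity $\mu_t(y) = \rho(\nabla\psi_t(y))\det\DD^2\psi_t(y)$ and the concavity of $\det^{1/d}$ on positive symmetric matrices, one shows that $\mu_t \geq \tfrac{\mrho}{\Mrho}\min(t,1-t)^d(\mu^0+\mu^1)$ as densities. This yields $\Var_{\mu_t}(v)\geq \tfrac{\mrho}{\Mrho}\min(t,1-t)^d\Var_{\mu^0+\mu^1}(v)$, and then $\int_0^1\min(t,1-t)^d\,\dd t = \tfrac{1}{(d+1)2^d}$ produces the dimensional constant. Your speculation about Payne--Weinberger or KLS-type Poincar\'e inequalities to account for the factors $(d+1)2^d$ and the second $\Mrho/\mrho$ is therefore a red herring: both come from this $\det^{1/d}$-concavity step, not from a sharper weighted Poincar\'e estimate. (A secondary omission: the second-derivative computation and the density formula for $\mu_t$ require $\phi^k$ smooth and strongly convex, which is not given; a nontrivial regularization argument --- Moreau--Yosida followed by Caffarelli regularity for smoothed targets --- is needed to reduce to that case, and carrying the estimate to the limit is not entirely routine since the Brenier potentials may have unbounded gradients near $\partial\X$.)
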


\begin{remark}[Variance] The left-hand side of \eqref{eq:Poincaré-psi} involves the variance of $\psi^1-\psi^0$ instead of a squared $\L^2$ norm. This is to be expected, because of the invariance of the Kantorovich's functional under addition of a constant. The choice of $\mu_0+\mu_1$ as the reference measure for the variance term in inequality \eqref{eq:Poincaré-psi} may seem unnatural, but we note that there is no natural reference measure on the target. The choice of $\frac{1}{2}(\mu^0+\mu^1)$ as the reference measures proves relevant for establishing the stability of Brenier potentials in the next section. Proposition~\ref{prop:bound-lp-primal-dual} especially asserts that $\Var_{\frac{1}{2}(\mu^0+\mu^1)}(\psi^1 - \psi^0) \geq \frac{1}{2}\Var_\rho(\phi^1 - \phi^0)$. We also note that, as detailed in the proof of Theorem \ref{th:Poincare-ineq}, 
the left-hand side of the inequality could actually be replaced by the quantity
$$ C_d \frac{\Mrho}{\mrho} \int_0^1 \Var_{\mu^t}(\psi^1 - \psi^0)\dd t,$$
where for $t \in [0, 1]$, $\mu^t = \nabla((1-t)\psi^0 + t \psi^1)^*_{\#} \rho$ interpolates between $\mu^0$ and $\mu^1$. This inequality is tighter, but the interpolation $t\mapsto \mu_t$ has no simple interpretation and is quite difficult to manipulate. In particular, this curve is \emph{not} a generalized geodesic in the sense of Ambrosio, Gigli, Savaré \cite{ambrosio2008gradient}.
\end{remark}

\begin{remark}[Optimality of exponents]
Estimate \eqref{eq:Poincaré-psi} is optimal in term of exponent of $\Var_{\frac{1}{2}(\mu^0+\mu^1)} (\psi^1 - \psi^0)$. Indeed in dimension $d=1$, for $\epsilon \geq 0$, denote $\mu_\epsilon$ the uniform probability measure on the segment $[\epsilon, 1+\epsilon]$. Then for $\rho = \mu_0$, one can show that for $\epsilon \leq 1$, both $\Var_{\frac{1}{2}(\mu^0+\mu^\epsilon )} (\psi^\epsilon - \psi^0)$ and $\sca{\psi^0 - \psi^\epsilon}{\mu^\epsilon - \mu^0}$ are of the order of $\epsilon^2$.
\end{remark}

The strong convexity estimate of Theorem \ref{th:Poincare-ineq} may find applications beyond the stability of optimal transport maps. In particular, the authors noticed in a subsequent work \cite{barycenters} that this estimate can be used to derive non-trivial quantitative stability bounds for Wasserstein barycenters (defined in \cite{agueh:hal-00637399}) under mild regularity assumptions on the marginal measures.

We also note that a strong convexity estimate similar to \eqref{eq:Poincaré-psi} was derived in \cite{pmlr-v151-delalande22a} for entropy-regularized optimal transport, using a proof similar in spirit to the proof of Theorem \ref{th:Poincare-ineq} to be presented below.% 

The strong convexity  estimate \eqref{eq:Poincaré-psi} is derived from a local estimate, 
a Poincaré-Wirtinger inequality for the second derivative of $\Kant$, which is in turn a consequence of the Brascamp-Lieb inequality \eqref{eq:brascamp-lieb}. 
To make the connection with the Brascamp-Lieb inequality clearer, we first compute the first and second order derivatives of $\Kant$ along the path 
$((1-t)\psi^0 + t \psi^1)_{t \in [0, 1]}$, under regularity and strong convexity hypotheses. These hypothesis are  relaxed in the proof of Theorem~\ref{th:Poincare-ineq}.

\begin{proposition} \label{prop:Poincare-smooth} %Let $\psi^0,\psi^1 \in \Class^{1, 1}(\Rsp^d)$ be strongly convex and $\Lip(\psi)$-Lipschitz functions. Let $v = \psi^1-\psi^0$. For $t \in [0, 1]$, define $\psi^t = \psi^0 + t v$ and $\phi^t = \psi^{t*}$. Then, 
Let $\phi^0, \phi^1 \in \Class^2(\Rsp^d)$ be strongly convex functions. Define $\psi^0 = (\phi^0)^*$, $\psi^1 = (\phi^1)^*$ and $v = \psi^1-\psi^0$. For $t \in [0, 1]$, define $\psi^t = \psi^0 + t v$ and finally $\phi^t = (\psi^t)^*$. Then, $\phi^t$ is a strongly convex function, belongs to $\Class^2(\Rsp^d)$, and
\begin{align}
\frac{\dd}{\dd t} \Kant(\psi^t) &=  -\int_\X v(\nabla \phi^t(x)) \dd \rho(x), \\
\frac{\dd^2}{\dd t^2} \Kant(\psi^t) &= \int_\X \sca{\nabla v(\nabla\phi^t(x))}{\DD^2 \phi^t(x)\cdot \nabla v(\nabla \phi^{t}(x)}\dd \rho(x).
    \label{eq:hessian_kant}
\end{align}
\end{proposition}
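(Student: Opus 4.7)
The plan is twofold: first to establish that each $\phi^t = (\psi^t)^*$ is $\Class^2$ and strongly convex, so that $\nabla\phi^t$ and $\DD^2\phi^t$ are classical objects depending smoothly on $(x,t)$; then to compute the two derivatives by combining the envelope identity (for the first) with implicit differentiation of the Legendre inversion (for the second), and to justify the exchange of derivative and integral via the compactness of $\X$.

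For the regularity step, since each $\phi^k$ ($k=0,1$) is $\Class^2$ and strongly convex, $\nabla\phi^k$ is a $\Class^1$ diffeomorphism of $\Rsp^d$ onto its image and the dual $\psi^k$ is $\Class^2$ with positive-definite Hessian $\DD^2\psi^k(y) = [\DD^2\phi^k(\nabla\psi^k(y))]^{-1}$. The convex combination $\psi^t = (1-t)\psi^0 + t\psi^1$ is then $\Class^2$ with $\DD^2\psi^t(y) = (1-t)\DD^2\psi^0(y) + t\DD^2\psi^1(y)$ positive definite at every $y$ and every $t\in[0,1]$, uniformly on compact subsets of $\Rsp^d$. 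Applying the implicit function theorem to the stationarity equation $x = \nabla\psi^t(y)$ furnishes a unique $\Class^1$ map $(x,t)\mapsto y^\star(x,t)$, and standard identities for Legendre transforms yield $\phi^t \in \Class^2(\Rsp^d)$ with $\nabla\phi^t(x) = y^\star(x,t)$ and $\DD^2\phi^t(x) = [\DD^2\psi^t(y^\star(x,t))]^{-1}$.

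To compute the first derivative I write $\phi^t(x) = \max_{y\in\Rsp^d} \sca{x}{y} - \psi^0(y) - t\,v(y)$. Since the maximizer is the unique $\Class^1$ point $y^\star(x,t)$, the envelope identity gives $\partial_t\phi^t(x) = -v(y^\star(x,t)) = -v(\nabla\phi^t(x))$. Compactness of $\X$ and continuity of $v\circ\nabla\phi^t$ on $\X\times[0,1]$ yield the uniform bound needed to differentiate under the integral, whence $\frac{\dd}{\dd t}\Kant(\psi^t) = -\int_\X v(\nabla\phi^t(x))\,\dd\rho(x)$.

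For the second derivative I differentiate $-v(\nabla\phi^t(x))$ in $t$ via the chain rule, the key step being the computation of $\partial_t\nabla\phi^t(x)$. Differentiating the identity $x = \nabla\psi^0(\nabla\phi^t(x)) + t\,\nabla v(\nabla\phi^t(x))$ in $t$ yields $\DD^2\psi^t(\nabla\phi^t(x))\cdot\partial_t\nabla\phi^t(x) + \nabla v(\nabla\phi^t(x)) = 0$, and the Legendre inversion $[\DD^2\psi^t(\nabla\phi^t(x))]^{-1} = \DD^2\phi^t(x)$ then produces $\partial_t\nabla\phi^t(x) = -\DD^2\phi^t(x)\cdot\nabla v(\nabla\phi^t(x))$. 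Substituting into $\partial_t[-v(\nabla\phi^t(x))] = -\sca{\nabla v(\nabla\phi^t(x))}{\partial_t\nabla\phi^t(x)}$ gives exactly the integrand of \eqref{eq:hessian_kant}, and a second application of dominated convergence on $\X$ concludes. The only real obstacle is the bookkeeping required to ensure uniform $\Class^1$-dependence of $y^\star$ on $\X\times[0,1]$ so that differentiation under the integral is legitimate; this follows from the uniform positive-definiteness of $\DD^2\psi^t$ on compact subsets.
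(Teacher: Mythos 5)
Your proof is correct and follows essentially the same route as the paper's: both establish $\Class^2$ regularity and strong convexity of $\phi^t$ via the inverse/implicit function theorem applied to $x = \nabla\psi^t(y)$, compute $\partial_t\nabla\phi^t(x) = -\DD^2\phi^t(x)\cdot\nabla v(\nabla\phi^t(x))$ by implicit differentiation, and then differentiate under the integral using compactness of $\X$. The only cosmetic difference is that you invoke the envelope theorem for the first derivative, whereas the paper differentiates the Fenchel--Young equality $\phi^t(x) = \sca{x}{\nabla\phi^t(x)} - \psi^t(\nabla\phi^t(x))$ directly and observes that the cross-terms cancel, which is precisely the content of the envelope identity in this setting.
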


We then find a positive lower-bound on the second order derivative expressed in equation \eqref{eq:hessian_kant} using the Brascamp-Lieb inequality \cite{brascamp-lieb}. We cite here a version of this inequality that is adapted to our context, i.e. that concerns log-concave probability measures supported on the compact and convex set $\X$. This statement is a special case of Corollary 1.3 of \cite{lepeutrec:hal-01349786}, where $\X$ is a convex subset of a Riemannian manifold. We also refer to Section 3.1.1 of \cite{Kolesnikov2017}.

\begin{theorem}[Brascamp-Lieb inequality]
\label{th:brascamp-lieb}
Let $\phi \in \Class^2(\X)$ be a strictly convex function. Let $\tilde{\rho}$ be the probability measure defined by $\dd \tilde{\rho} = \frac{1}{Z_\phi}
\exp(-\phi) \dd x$ with $Z_\phi = \int_\X \exp(-\phi) \dd x$. Then every smooth function $s$ on $\X$ verifies:
\begin{equation}
     \label{eq:brascamp-lieb}
     \Var_{\tilde{\rho}} (s) \leq \Esp_{\tilde{\rho}} \sca{\nabla s }{ (\DD^2 \phi)^{-1} \cdot \nabla s }.
 \end{equation}
\end{theorem}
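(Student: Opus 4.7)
The plan is to adapt the classical Helffer--Sj\"ostrand proof of Brascamp--Lieb to the compact convex setting via the weighted Laplacian
\begin{equation*}
L s := \Delta s - \sca{\nabla \phi}{\nabla s},
\end{equation*}
equipped with Neumann boundary conditions on $\partial\X$. By Green's identity this operator is symmetric and nonpositive on $\L^2(\tilde\rho)$; since $\phi$ is strictly convex with positive-definite Hessian and $\X$ is bounded, $L$ has a spectral gap above its one-dimensional kernel of constants. Consequently, for every smooth $s$ with mean $\bar s := \Esp_{\tilde\rho}(s)$, the Poisson equation $Lu = s - \bar s$ admits a solution $u$ with $\partial_\nu u = 0$ on $\partial\X$, smooth up to the boundary by elliptic regularity.

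Two integrations by parts plus one Cauchy--Schwarz then finish the job. First, the variance rewrites as
\begin{equation*}
\Var_{\tilde\rho}(s) = \int_\X s(s-\bar s)\,\dd\tilde\rho = \int_\X s\, Lu\,\dd\tilde\rho = -\int_\X \sca{\nabla s}{\nabla u}\,\dd\tilde\rho.
\end{equation*}
Second, differentiating the Poisson equation coordinatewise gives the commutation identity $L(\nabla u) - \DD^2\phi\cdot\nabla u = \nabla s$, with $L$ acting componentwise. Pairing with $\nabla u$ and integrating against $\tilde\rho$,
\begin{equation*}
-\int_\X \sca{\nabla s}{\nabla u}\,\dd\tilde\rho = -\int_\X \sca{L\nabla u}{\nabla u}\,\dd\tilde\rho + \int_\X \sca{\DD^2\phi\cdot\nabla u}{\nabla u}\,\dd\tilde\rho.
\end{equation*}
Assuming the first term on the right is nonnegative (see below), this yields $\int_\X \sca{\DD^2\phi\cdot\nabla u}{\nabla u}\,\dd\tilde\rho \leq \Var_{\tilde\rho}(s)$. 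Applying Cauchy--Schwarz pointwise to $\sca{\nabla s}{\nabla u}$ with the weights $(\DD^2\phi)^{\pm 1/2}$ and then Cauchy--Schwarz on the integrals gives
\begin{equation*}
\Var_{\tilde\rho}(s)^2 \leq \Esp_{\tilde\rho}\sca{(\DD^2\phi)^{-1}\nabla s}{\nabla s}\cdot\int_\X \sca{\DD^2\phi\cdot\nabla u}{\nabla u}\,\dd\tilde\rho,
\end{equation*}
and dividing by $\Var_{\tilde\rho}(s)$ (the $\Var=0$ case being trivial) gives \eqref{eq:brascamp-lieb}.

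The main obstacle, and the only place convexity of $\X$ really enters, is the inequality $-\int_\X\sca{L\nabla u}{\nabla u}\,\dd\tilde\rho \geq 0$. Componentwise integration by parts on the vector field $\nabla u$ produces the bulk term $\int_\X \nr{\DD^2 u}_{\mathrm{HS}}^2\,\dd\tilde\rho \geq 0$ together with a boundary integral that, after using $\partial_\nu u = 0$, reduces to a quadratic form in the tangential gradient of $u$ weighted by the second fundamental form of $\partial\X$. Convexity of $\X$ is precisely what forces this Reilly-type boundary term to have the correct sign. This step is the only nontrivial analytical input beyond elementary integration by parts, and it is the reason the Brascamp--Lieb inequality transfers verbatim from $\Rsp^d$ to a bounded convex domain equipped with Neumann data.
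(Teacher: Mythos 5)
The paper does not prove this statement at all: immediately after the theorem it writes ``This statement is a special case of Corollary 1.3 of \cite{lepeutrec:hal-01349786}, where $\X$ is a convex subset of a Riemannian manifold. We also refer to Section 3.1.1 of \cite{Kolesnikov2017}.'' So there is no internal proof to compare against, and your task was effectively to supply one.

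Your argument is the standard Helffer--Sj\"ostrand / Bakry--\'Emery proof adapted to the Neumann setting, and it is correct. The commutation identity $L\nabla u - \DD^2\phi\cdot\nabla u = \nabla s$ follows by differentiating $Lu = s - \bar s$, the pairing and Cauchy--Schwarz steps are fine, and the Reilly-type boundary computation is the right mechanism: with $\partial_\nu u = 0$ one has $\tfrac12 \partial_\nu\abs{\nabla u}^2 = -\mathrm{II}(\nabla_\tau u,\nabla_\tau u)$ on $\partial\X$, so the boundary contribution to $-\int\sca{L\nabla u}{\nabla u}\dd\tilde\rho$ is $\frac{1}{Z}\int_{\partial\X}\mathrm{II}(\nabla_\tau u,\nabla_\tau u)e^{-\phi}\dd\sigma \geq 0$ by convexity, while the bulk term is $\int\nr{\DD^2 u}^2_{\mathrm{HS}}\dd\tilde\rho\geq 0$. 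This is precisely the content of Le Peutrec's Witten-Laplacian-with-boundary approach, so you have in effect reproduced the proof of the reference the paper relies on.

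One point worth flagging: your proof implicitly requires $\partial\X$ to be smooth (or at least $C^2$) so that the Neumann problem has a solution smooth up to the boundary, the boundary integration by parts is licit, and the second fundamental form is defined. The theorem as stated only assumes $\X$ compact and convex, which allows corners. This is a genuine (if minor) gap; it is closed by a routine approximation of $\X$ by smooth strongly convex bodies, which the paper in fact performs later (in the passage from Proposition~\ref{prop:Poincare-smooth2} to Theorem~\ref{th:Poincare-ineq}), but you should say so rather than let the smoothness assumption pass unnoticed. Also, the strict convexity of $\phi$ is not what gives the spectral gap (any smooth bounded connected domain does); it is what makes $(\DD^2\phi)^{-1}$ well-defined and, by compactness of $\X$ and continuity of $\DD^2\phi$, bounded, so that the Cauchy--Schwarz with weights $(\DD^2\phi)^{\pm 1/2}$ makes sense.
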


We now justify the computation of the derivatives presented in Proposition \ref{prop:Poincare-smooth}.

\begin{proof}[Proof of Proposition~\ref{prop:Poincare-smooth}] 
We assume that $\phi^0,\phi^1$ are both $\alpha$-strongly convex and belong to $\Class^2(\Rsp^d)$. Then, the convex conjugates $\psi^0 = (\phi^0)^*$, $\psi^1 = (\phi^1)^*$ are $\Class^2$ with $1/\alpha$-Lipschitz gradients and satisfy $\DD^2 \psi^0 > 0, \DD^2 \psi^1 > 0$ everywhere on $\Rsp^d$. Hence their linear interpolates $\psi^{t} = (1-t)\psi^0 + t\psi^1$ enjoy the same properties.
This in turn implies that for all $t\in[0,1],$  the  convex conjugate $\phi^{t}$ of $\psi^t$ belongs to $\Class^2(\Rsp^d)$ and is $\alpha$-strongly convex.

We will now prove that the map $G:(t,x) \mapsto \nabla \phi^t(x)$ has class $\Class^1$. Let $F: [0,1]\times\Rsp^d\times \Rsp^d\to\Rsp^d$ 
be the continuously differentiable function defined by $F(t,x,y) = \nabla \psi^t(y) - x$. A well-known property of the convex conjugate is that $\nabla \phi^t$ is the inverse of $\nabla \psi^t$, implying that $G(t,x)$ is uniquely characterized
by $F(t,x,G(t,x)) = 0$. Since $\DD^2 \psi^t > 0$, the Jacobian $\DD_y F(t, x, y) = \DD^2\psi^{t}(y)$ is invertible and the implicit function theorem thus implies that $G$ has class $\Class^1$. Differentiating  the relation $F(t,x,G(t,x)) = 0$ with respect to time, we get
\begin{equation}
    \label{eq:gradient-nabla-phi-t}
    \frac{\dd}{\dd t} \nabla \phi^t(x) = - \DD^2\phi^t(x) \cdot \nabla v(\nabla \phi^t(x)).
\end{equation}
By Fenchel-Young's equality case, one has  for any $x\in \X$ and $t\in[0,1]$,
$$
\phi^t(x) = \sca{x}{\nabla \phi^t(x)} - \psi^{t}(\nabla \phi^t(x)),$$
so that $\phi^t$ is at least $\Class^1$ with respect to time. We can actually differentiate this equation with respect to time twice and 
using \eqref{eq:gradient-nabla-phi-t} we get
$$ 
\begin{aligned}
&\frac{\dd}{\dd t}\phi^t(x) = \sca{x}{\frac{\dd}{\dd t}\nabla \phi^t(x)} - v(\nabla \phi^t(x)) - \sca{\nabla \psi^t(\nabla \phi^t)}{\frac{\dd}{\dd t}\nabla \phi^t(x)} = - v(\nabla \phi^t(x)), \\
&\frac{\dd^2}{\dd t^2}\phi^t(x) = - \sca{\nabla v(\nabla\phi^t(x))}{\frac{\dd}{\dd t} \nabla \phi^t(x)} = \sca{\nabla v(\nabla\phi^t(x))}{\DD^2 \phi^t(x)\cdot \nabla v(\nabla \phi^{t}(x))}.
\end{aligned}
$$
Since $\Kant(\psi^t) = \int_\X \phi^t(x)\dd \rho(x)$, we get the result by differentiating twice under the integral.

\end{proof}

\begin{proposition} \label{prop:Poincare-ineq-smooth}
In addition to the assumptions of \cref{th:Poincare-ineq}, assume that the Brenier potentials $\phi^0,\phi^1$ are strongly convex, belong to $\Class^2(\Rsp^d)$, and that 
$\nabla\phi^0$ and $\nabla \phi^1$ induce diffeomorphisms between $\X$ and a closed ball $\Y$. Then, inequality \eqref{eq:Poincaré-psi} holds.
\end{proposition}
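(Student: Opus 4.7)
The plan is to combine the Brascamp--Lieb inequality (applied to the log-concave measure $\propto e^{-\psi^t}$ on $\Y$) with the convexity of the Kantorovich functional along the linear interpolation $\psi^t := (1-t)\psi^0 + t\psi^1$, then transfer the resulting Poincar\'e-type inequality from $e^{-\psi^t}/Z_t$ to $\mu^t$, and finally aggregate in $t$ to recover a variance with respect to $\mu^0+\mu^1$. Set $v = \psi^1 - \psi^0$, $\phi^t = (\psi^t)^*$, and $\mu^t = (\nabla\phi^t)_\#\rho$. By \cref{prop:Poincare-smooth}, $t\mapsto \Kant(\psi^t)$ is $\Class^2$ and its derivatives give $\frac{\dd}{\dd t}\Kant(\psi^t) = -\int v\,\dd\mu^t$, so the fundamental theorem of calculus yields
\begin{equation*}
\sca{\psi^0 - \psi^1}{\mu^1 - \mu^0} \;=\; \int_0^1 \frac{\dd^2}{\dd t^2}\Kant(\psi^t)\,\dd t \;=:\; \int_0^1 I_t\,\dd t.
\end{equation*}

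Next I would rewrite $I_t$ as an integral on $\Y$. Under the diffeomorphism hypothesis on $\nabla \phi^0,\nabla\phi^1$, strict convexity of $\psi^t$ and a continuity/degree argument show $\nabla\phi^t:\X\to\Y$ is a $\Class^1$ diffeomorphism for every $t\in[0,1]$. Changing variables $y = \nabla\phi^t(x)$ in \eqref{eq:hessian_kant}, using $\DD^2\phi^t(\nabla\psi^t(y)) = (\DD^2\psi^t(y))^{-1}$, produces
\begin{equation*}
I_t = \int_\Y \sca{\nabla v(y)}{(\DD^2\psi^t(y))^{-1}\nabla v(y)}\,\dd\mu^t(y),
\end{equation*}
where the density of $\mu^t$ on $\Y$ is $r_t(y) = \rho(\nabla\psi^t(y))\det(\DD^2\psi^t(y))$. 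Now $\psi^t \in \Class^2$ is strictly convex on the convex ball $\Y$, so \cref{th:brascamp-lieb} applied with the log-concave reference $\dd\tilde\rho^t = Z_t^{-1}e^{-\psi^t}\mathbf{1}_\Y\,\dd y$ gives
\begin{equation*}
\Var_{\tilde\rho^t}(v) \;\leq\; \int_\Y \sca{\nabla v(y)}{(\DD^2\psi^t(y))^{-1}\nabla v(y)}\,\dd\tilde\rho^t(y).
\end{equation*}

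The crux is to transfer this Brascamp--Lieb inequality from $\tilde\rho^t$ to $\mu^t$. From $\rho\in[\mrho,\Mrho]$ one has $\mrho\det(\DD^2\psi^t)\leq r_t\leq\Mrho\det(\DD^2\psi^t)$. The bounds $\phi^0,\phi^1\in[\mphi,\Mphi]$ transmit to $\phi^t$ via the pointwise inequality $\phi^t \leq (1-t)\phi^0 + t\phi^1$ (from convexity of the Legendre transform) and an analogous lower bound obtained through the Fenchel--Young equality $\psi^t(y) + \phi^t(\nabla\psi^t(y)) = \sca{y}{\nabla\psi^t(y)}$. These controls yield a two-sided comparison $r_t \asymp e^{-\psi^t}/Z_t$ with a constant involving $\Mrho/\mrho$, $\Mphi-\mphi$, and dimensional volume factors (Brunn--Minkowski style estimates of $Z_t$ and of $\int_\Y\det\DD^2\psi^t\,\dd y = \vol(\X)$). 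Substituting into Brascamp--Lieb produces the single-time estimate $\Var_{\mu^t}(v) \leq C_d\tfrac{\Mrho}{\mrho}(\Mphi-\mphi)\,I_t$.

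Integrating in $t$ yields $\int_0^1\Var_{\mu^t}(v)\,\dd t \leq C_d\tfrac{\Mrho}{\mrho}(\Mphi-\mphi)\sca{\psi^0-\psi^1}{\mu^1-\mu^0}$, matching the sharper form announced in the remark after \cref{th:Poincare-ineq}. To conclude \eqref{eq:Poincaré-psi} one compares $\mu^0+\mu^1$ with the family $(\mu^t)_{t\in[0,1]}$: using the density bounds from step~3 and the identity $\Var_{\mu^0+\mu^1}(v) = \Var_{\mu^0}(v) + \Var_{\mu^1}(v) + \tfrac{1}{2}\sca{v}{\mu^0-\mu^1}^2$, one absorbs the quadratic mean-difference term into $\int_0^1 I_t\,\dd t$ (since $\sca{v}{\mu^0-\mu^1}=\int_0^1 I_t\,\dd t$) and bounds the two endpoint variances by $\Var_{\mu^t}(v)$ at intermediate times, losing an extra factor of $\Mrho/\mrho$. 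The principal difficulty I anticipate is the density comparison in step~3: it is not enough that $\rho$ is bounded; one must exploit the precise oscillation $\Mphi-\mphi$ and extract the $e(d+1)2^d$ constant from a volume/Brunn--Minkowski estimate, and only under these bounds does Brascamp--Lieb translate into a Poincar\'e inequality for the non log-concave measure $\mu^t$.
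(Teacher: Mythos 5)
The high-level plan (FTC for the Kantorovich functional, Brascamp--Lieb for a Poincar\'e-type inequality, density comparison, integrate in $t$) matches the paper, but there is a genuine and essential gap in the way you deploy Brascamp--Lieb. You apply it on $\Y$ with the log-concave reference $\tilde\rho^t \propto e^{-\psi^t}$, which correctly produces $\Var_{\tilde\rho^t}(v) \leq \int_\Y \sca{\nabla v}{(\DD^2\psi^t)^{-1}\nabla v}\,\dd\tilde\rho^t$. But to compare this with $I_t = \int_\Y \sca{\nabla v}{(\DD^2\psi^t)^{-1}\nabla v}\,\dd\mu^t$ you need a two-sided pointwise comparison between the density $r_t(y) = \rho(\nabla\psi^t(y))\det(\DD^2\psi^t(y))$ of $\mu^t$ and $e^{-\psi^t(y)}/Z_t$, i.e.\ a bound on the oscillation of $\det(\DD^2\psi^t(y))\,e^{\psi^t(y)}$ over $\Y$. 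No such bound follows from $\rho\in[\mrho,\Mrho]$ and $\phi^t\in[\mphi,\Mphi]$: the Hessian determinant $\det(\DD^2\psi^t)$ is free to oscillate wildly (subject only to $\int_\Y \det(\DD^2\psi^t) = \vol(\X)$) independently of the pointwise values of $\psi^t$. The Brunn--Minkowski estimate you invoke controls $Z_t$ and global volumes, not this local ratio, so the step ``$r_t \asymp e^{-\psi^t}/Z_t$'' does not hold. The paper avoids this by keeping the Brascamp--Lieb inequality on $\X$ with the weight $e^{-\phi^t}$: since $\mphi \leq \phi^t \leq \Mphi$ on the compact set $\X$, the reference density $\tilde\rho^t \propto e^{-\phi^t}$ is genuinely comparable to $\rho$ there, with ratio controlled by $\tfrac{\Mrho}{\mrho}e^{\Mphi-\mphi}$; the inequality is then pushed forward to $\mu^t$ by $\nabla\phi^t$. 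If you insist on working on $\Y$, the correct reference is the push-forward of $e^{-\phi^t}/Z_t$ by $\nabla\phi^t$, whose density is $\propto e^{-\phi^t(\nabla\psi^t(y))}\det(\DD^2\psi^t(y))$ --- \emph{not} $\propto e^{-\psi^t(y)}$ --- and this is not a measure to which Brascamp--Lieb with the metric $(\DD^2\psi^t)^{-1}$ applies directly.

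Two further points where the proposal is incomplete even granting the fix. First, in your final aggregation step you need to pass from $\int_0^1\Var_{\mu^t}(v)\,\dd t$ to $\Var_{\mu^0+\mu^1}(v)$; the variance decomposition you write is correct and the quadratic mean-difference term can indeed be absorbed using $\|\psi^1-\psi^0\|_\infty \leq \Mphi-\mphi$, but the step ``bounds the two endpoint variances by $\Var_{\mu^t}(v)$ at intermediate times'' cannot work with a uniform constant; what is actually needed is the $t$-dependent lower bound $\mu^t \geq \tfrac{\mrho}{\Mrho}\min(t,1-t)^d(\mu^0+\mu^1)$, established in the paper via the explicit density formula for $\mu^t$ and the concavity of $\det^{1/d}$ on positive symmetric matrices; this is an essential ingredient and does not appear in your proposal. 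Second, the natural Brascamp--Lieb route gives the constant $e^{\Mphi-\mphi}$, not $(\Mphi-\mphi)$; to match \eqref{eq:Poincaré-psi} you need the scaling argument $\phi^k \mapsto \lambda\phi^k$, optimizing over $\lambda$, which replaces $e^{\Mphi-\mphi}$ with $e(\Mphi-\mphi)$. This step is absent from your plan.
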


\begin{proof} Under the assumptions on $\phi^0,\phi^1$,
Proposition \ref{prop:Poincare-smooth} ensures that the function $\phi^t$ it defines is strongly convex and belongs to $\Class^2(\Rsp^d)$  for any $t \in [0, 1]$. By the fundamental theorem of calculus, again with the notations of Proposition \ref{prop:Poincare-smooth}, we have:
\begin{align}
    \sca{\psi^0 - \psi^1}{\mu^1 - \mu^0} = \left.\frac{\dd}{\dd t} \Kant(\psi^t)\right|_{t=1} - \left.\frac{\dd}{\dd t} \Kant(\psi^t)\right|_{t=0} 
    = \int_0^1 \frac{\dd^2}{\dd t^2} \Kant(\psi^t) \dd t. 
    \label{eq:int-der-second-K}
\end{align}
From Proposition \ref{prop:Poincare-smooth}, we have the following expression for the second derivative of $\Kant$:
$$ \frac{\dd^2}{\dd t^2} \Kant(\psi^t) = \Esp_{\rho} \sca{\nabla v (\nabla \phi^t)}{ (\DD^2 \phi^t) \cdot \nabla v(\nabla \phi^t) }.$$
We introduce $\tilde{v}^t = v (\nabla \phi^t)$ for any $t \in [0, 1]$, which belongs to $\Class^1(\Rsp^d)$ as the composition of $v = \psi^1 - \psi^0 \in \Class^{2}(\Rsp^d)$ and $\nabla \phi^t$. We have $\nabla \tilde{v}^t = \DD^2 \phi^t \cdot \nabla v(\nabla \phi^t)$, where $(\DD^2 \phi^t)$ is invertible by strong convexity. Thus,
\begin{equation}
    \label{eq:2nd-derivative-Kant}
    \frac{\dd^2}{\dd t^2} \Kant(\psi^t) = \Esp_{\rho} \sca{\nabla \tilde{v}^t }{ (\DD^2 \phi^t)^{-1} \cdot \nabla \tilde{v}^t }.
\end{equation}
We now introduce $\tilde{\rho}^t = \exp(-\phi^t)/Z_t$ where $Z_t = \int_\X \exp(-\phi^t(x)) \dd x$, which is the density of a log-concave probability measure supported on $\X$. The Brascamp-Lieb inequality, recalled in \cref{th:brascamp-lieb}, then ensures that
 \begin{equation}
     \label{eq:brascamp-lieb-2}
     \Var_{\tilde{\rho}^t} (\tilde{v}^t) \leq \Esp_{\tilde{\rho}^t} \sca{\nabla \tilde{v}^t }{ (\DD^2 \phi^t)^{-1} \cdot \nabla \tilde{v}^t }.
 \end{equation}
 We assumed that for any $k \in \{0, 1\}$ and $x \in \X$, $\mphi \leq \phi^k(x) \leq \Mphi$. We claim that this property is transferred to $\phi^t$ for any $t \in [0, 1]$. Indeed,
 on the one hand for all $t \in [0,1]$,
\begin{align*}
    \phi^t &= \left( (1-t) \psi^0 + t \psi^1 \right)^* 
    \leq (1-t) (\psi^0)^* + t (\psi^1)^* 
    = (1-t) \phi^0 + t \phi^1 
    \leq \Mphi,
\end{align*}
where we used the convexity of the convex conjugation. On the other hand, for any $x \in \X$, we have by definition:
\begin{align*}
    \phi^t(x) &= \sup_{y \in \Rsp^d} \sca{x}{y} - \psi^t(y)
    \geq - \psi^t(0) 
    = -(1-t) \psi^0(0) - t\psi^1(0).
\end{align*}
But again, for $k \in \{0, 1\},$ $\psi^k(0) = \sup_{x \in \X} - \phi^k(x) \leq - \mphi$, ensuring that  $\phi^t \geq \mphi$ for all $t \in [0, 1]$.
The inequality $\mphi \leq \phi^t \leq \Mphi$ allows us to compare the densities $\rho$ and $\tilde{\rho}^t$:
$$ \left(\frac{\exp(-\Mphi)}{\Mrho Z_t}\right) \rho \leq \tilde{\rho}^t \leq \left(\frac{\exp(-\mphi)}{\mrho Z_t}\right) \rho. $$ This comparison and equation \eqref{eq:brascamp-lieb-2} thus give:
\begin{equation*}
    \left(\frac{\exp(-\Mphi)}{\Mrho Z_t}\right) \Var_{\rho} (\tilde{v}^t) \leq  \left(\frac{\exp(-\mphi)}{\mrho Z_t}\right) \Esp_{\rho} \sca{\nabla \tilde{v}^t }{ (\DD^2 \phi^t)^{-1} \cdot \nabla \tilde{v}^t },
\end{equation*}
where we used that for any absolutely continuous $\rho_1, \rho_2 \in \Prob_2(\Rsp^d)$ and $f : \Rsp^d \to \Rsp$, the density comparison $\rho_1 \leq C \rho_2$ for some $C > 0$ yields $$ \Var_{\rho_1}(f) = \min_{c \in \Rsp} \nr{f - c}^2_{\L^2(\rho_1)} \leq C \min_{c \in \Rsp} \nr{f - c}^2_{\L^2(\rho_2)} = C \Var_{\rho_2}(f). $$ Therefore, using $\tilde{v}^t = v(\nabla \phi^t)$, $\mu^t = (\nabla \phi^t)_\# \rho$, $v = \psi^1 - \psi^0$ and expression \eqref{eq:2nd-derivative-Kant}:
\begin{equation}
    \label{eq:brascamp-lieb-3}
    \Var_{\mu^t}(\psi^1 - \psi^0) \leq \frac{\Mrho}{\mrho} \exp(\Mphi - \mphi) \frac{\dd^2}{\dd t^2} \Kant(\psi^t).
\end{equation}
Note that $\mu^t = \nabla((1-t)\psi^0 + t \psi^1)^*_{\#} \rho$ interpolates between $\mu^0$ in $t=0$ and $\mu^1$ in $t=1$, but this interpolation is neither a displacement interpolation in the sense of McCann \cite{MCCANN1997153} nor a generalized geodesic in the sense of Ambrosio, Gigli, Savaré \cite{ambrosio2008gradient}. Recalling equation \eqref{eq:int-der-second-K}, this equation is similar to that of \eqref{eq:Poincaré-psi}, except that we would like to replace $\mu^t$ by $\frac{1}{2}(\mu^0+\mu^1)$. For this purpose, we  will prove that
\begin{equation}\label{eq:lb-mut}
\mu^t \geq \frac{\mrho}{\Mrho}  \min(t,1-t)^d (\mu^0+\mu^1).
\end{equation}
This will be done using an explicit expression for $\mu^t$. By smoothness and strong convexity of the function $\phi^t$, the restriction of $\nabla \phi^t$ to $\X$ is a diffeomorphism on its image. This implies that $\mu^t$ is absolutely continuous with respect to the Lebesgue measure. Moreover, by e.g. Villani~\cite[p.9]{villani2003}, for any $x\in \X$ the density of $\mu^t$ with respect to Lebesgue,  also noted $\mu^t$,  is given by
$\mu^t(\nabla \phi^t(x)) \det (\DD^2 \phi^t(x)) = \rho(x)$.
Setting $y = \nabla \phi^t(x)$ in this formula, we get
$$ \forall y \in \nabla \phi^t(\X),\quad \mu^t(y) = \rho(\nabla \psi^t(y)) \det(\DD^2 \psi^t(y)). $$

By assumption, $\nabla\phi^k$ is a diffeomorphism from $\X$ to $\Y$ and so is $\nabla \psi^k$ from $\Y$ to $\X$. Thus by convexity of $\X$, $\nabla \psi^t(\Y) \subset \X$, which entails $\Y \subset \nabla \phi^t(\X)$. The equality above then gives
$$ \forall k\in \{0,1\},\forall y\in \Y,\quad \mu^k(y) \leq \Mrho \det(\DD^2 \psi^k(y)). $$
On the other hand, the same equality gives
$$ \forall t\in[0,1], \forall y\in \Y, \quad \mu^t(y) \geq \mrho \det(\DD^2 \psi^t(y)). $$
Using the two inequalities above and the  concavity of $\det^{1/d}$ over the set of non-negative symmetric matrices, we get for every 
$y \in \Y,$
\begin{align*}
    \mu^t(y) &\geq \mrho \det(\DD^2\psi^t(y))\\
    &\geq \mrho \left((1-t) \det(\DD^2\psi^0)^{1/d} + t \det(\DD^2\psi^1)^{1/d}\right)^d\\
    &\geq \mrho \min(t,1-t)^d (\det(\DD^2\psi^0(y)) + \det(\DD^2\psi^1(y)))\\
    &\geq \frac{\mrho}{\Mrho} \min(t,1-t)^d (\mu^0(y) + \mu^1(y))
\end{align*}
Using that $\spt(\mu^0) = \spt(\mu^1) = \Y$, this directly implies \eqref{eq:lb-mut},

which in turn gives us
\begin{equation*}
    \Var_{\mu^t}(v) \geq  2\min(t,1-t)^d \frac{\mrho}{\Mrho} \Var_{\frac{1}{2}(\mu^0+\mu^1)}(v).
\end{equation*}
Combined with inequality \eqref{eq:brascamp-lieb-3}, this gives after integrating over $t \in [0, 1]$:
\begin{equation*}
    \frac{1}{(d+1)2^{d-1}} \frac{\mrho}{\Mrho} \Var_{\frac{1}{2}(\mu^0+\mu^1)} (v) \leq \frac{ \Mrho}{\mrho} \exp(\Mphi - \mphi) \int_0^1 \frac{\dd^2}{\dd t^2} \Kant(\psi^t) \dd t.
\end{equation*}
Using \eqref{eq:int-der-second-K}, we obtain the inequality
\begin{equation}
\label{eq:estimate-exp}
    \Var_{\frac{1}{2}(\mu^0+\mu^1)} (\psi^1 - \psi^0) \leq (d+1)2^{d-1} \frac{ \Mrho^2}{\mrho^2} \exp(\Mphi - \mphi) \sca{\psi^0 - \psi^1}{\mu^1 - \mu^0}.
\end{equation}
We finally leverage an in-homogeneity in the scale of the Brenier potentials in the last inequality in order to improve the dependence on $\Mphi - \mphi$. For any $\lambda>0$, introduce for $k \in \{0, 1\}$ the Brenier potential $\phi^k_\lambda = \lambda \phi^k$ and denote $\mu^k_\lambda = (\nabla \phi^k_\lambda)_\# \rho$ the corresponding probability measure and $\psi^k_\lambda = (\phi^k_\lambda)^*$ its dual potential. Then using the formula $\psi^k_\lambda = \lambda \psi^k(\cdot/\lambda)$, one can notice that for any $\lambda>0$,
\begin{gather*}
    \Var_{\frac{1}{2}(\mu^0_\lambda+\mu^1_\lambda )} (\psi^1_\lambda - \psi^0_\lambda) = \lambda^2 \Var_{\frac{1}{2}(\mu^0+\mu^1)} (\psi^1 - \psi^0), \\
    \sca{\psi^0_\lambda - \psi^1_\lambda}{\mu^1_\lambda - \mu^0_\lambda} = \lambda \sca{\psi^0 - \psi^1}{\mu^1 - \mu^0}, \\
    \forall x\in\X, \forall k\in\{0, 1\}, \quad \lambda \mphi \leq \phi^k_\lambda(x) \leq \lambda \Mphi.
\end{gather*}
Thus applying inequality \eqref{eq:estimate-exp} to $\mu^0_\lambda, \mu^1_\lambda$ and the associated potentials yields for any $\lambda>0$
\begin{equation*}
    \Var_{\frac{1}{2}(\mu^0+\mu^1)} (\psi^1 - \psi^0) \leq (d+1)2^{d-1} \frac{ \Mrho^2}{\mrho^2} \frac{\exp(\lambda(\Mphi - \mphi))}{\lambda} \sca{\psi^0 - \psi^1}{\mu^1 - \mu^0}.
\end{equation*}
Choosing $\lambda = \frac{1}{\Mphi - \mphi}$ in the last inequality finally gives
\begin{equation*}
    \Var_{\frac{1}{2}(\mu^0+\mu^1)} (\psi^1 - \psi^0) \leq e (d+1)2^{d-1} \frac{ \Mrho^2}{\mrho^2} (\Mphi - \mphi) \sca{\psi^0 - \psi^1}{\mu^1 - \mu^0}. \qedhere
\end{equation*}

\end{proof}

To deduce the general case of \cref{th:Poincare-ineq}, we need to approximate the convex potentials $\phi^0, \phi^1$ on $\X$ with strongly convex potentials $\phi^0_n, \phi^1_n$ that belong to $\Class^2(\X)$ and that are such that their gradients $\nabla \phi^0_n, \nabla \phi^1_n$ induce diffeomorphisms between $\X$ and a closed ball $\Y_n$. A regularization that uses a (standard) convolution does not seem directly feasible. Indeed, $\phi^k$ is defined on $\X$ only, and 
its gradient explodes on the boundary of $\X$ when $\mu^k$ has non-compact support, so that any convex extension of $\phi^k$ to $\Rsp^d$ has to take value $+\infty$. 

Our strategy is as follows. First, we resort to Moreau-Yosida's regularization to approximate the functions $\phi^0,\phi^1$ by regular convex functions
defined on $\Rsp^d$. Then, we regularize the target probability measures associated to the approximated potentials and resort to Caffarelli’s regularity theory  to guarantee  smoothness and strong convexity. Caffarelli’s regularity theory results  require smoothness assumptions on the source probability measure and strong convexity and smoothness assumption on the domain. We make these assumptions in the next proposition, but we will later show that these can be relaxed to get the general case of \cref{th:Poincare-ineq}.

\begin{proposition}
\label{prop:reg-psi}
Let $\mu^0, \mu^1 \in \Prob_2(\Rsp^d)$. Let $\X$ be a compact, smooth and strongly convex set, let $\rho$ be a smooth probability density on $\X$ and assume that $\rho$ is bounded away from zero and
infinity on this set.
Denote $\phi^k$ the  Brenier potentials for the quadratic optimal transport from $\rho$ to $\mu^k$, $\psi^k = (\phi^k)^*$, and assume that there exists 
$\mphi, \Mphi \in \Rsp$ such for $k\in \{0, 1\}$ and any $x \in \X$, $$ \mphi \leq \phi^k(x) \leq \Mphi.$$
Then there exists sequences of strongly convex functions $(\phi^0_n)_{n\in \Nsp}, (\phi^1_n)_{n\in\Nsp}$ in $\Class^2(\Rsp^d)$  such that if one
introduces $\mu^k_n = \nabla \phi^k_{n\#} \rho$ and $\psi^k_n = (\phi^k_n)^*$, then:
\begin{enumerate}[label=(\roman*)]
%    \item \label{it:cv-wass} for $k \in \{0, 1\}$, $\lim_{n\to +\infty } \Wass_2(\mu^k_n, \mu^k) = 0$ 
    \item \label{it:cv-potentials} $\lim_{n\to +\infty} \sca{\psi^0_n - \psi^1_n}{\mu^1_n - \mu^0_n} = \sca{\psi^0 - \psi^1}{\mu^1 - \mu^0}$,
    \item \label{it:cv-var} $\lim_{n\to +\infty}  \Var_{\frac{1}{2}(\mu^0_n+\mu^1_n)}(\psi^1_n - \psi^0_n) = \Var_{\frac{1}{2}(\mu^0+\mu^1)}(\psi^1 - \psi^0)$,
    \item \label{it:bound-phi} let ${\mphi}_n= \min_{\X} \min_k \phi^k_n$, and ${\Mphi}_n = \max_{\X} \max_k \phi^k_n$. Then,
    $$\mphi \leq \liminf_{n\to +\infty} {\mphi}_n \leq \limsup_{n\to +\infty} {\Mphi}_n \leq \Mphi$$
      \item \label{it:support} there exists a closed ball  $\Y_n$ such that 
        for  $k\in\{0,1\}$, $\nabla \phi^k_n$ is a
        diffeomorphism between $\X$ and $\Y_n$.
\end{enumerate}
\end{proposition}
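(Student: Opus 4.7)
The strategy is to perform three successive regularizations --- Moreau-Yosida smoothing of $\phi^0,\phi^1$, convolution plus truncation on the resulting target measures, and a smooth convex extension of the resulting Brenier potential to $\Rsp^d$ --- and then extract a diagonal subsequence satisfying (i)--(iv). First, extend each $\phi^k$ by $+\infty$ outside $\X$ and form its Moreau envelope $\phi^k_\epsilon(x) = \inf_y\{\phi^k(y) + \frac{1}{2\epsilon}\nr{x-y}^2\}$. Then $\phi^k_\epsilon$ is convex and $\Class^{1,1}$ on $\Rsp^d$, $\phi^k_\epsilon \uparrow \phi^k$ uniformly on $\X$ as $\epsilon \downarrow 0$, and $\tilde\mu^k_\epsilon := \nabla\phi^k_{\epsilon\#}\rho$ is compactly supported with $\Wass_2(\tilde\mu^k_\epsilon, \mu^k) \to 0$. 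Next, pick a smooth mollifier $\eta_n$ supported in $B(0,1/n)$ and a radius $r_n \to \infty$, and define
$$ \mu^k_{\epsilon, n} = \frac{1}{Z^k_{\epsilon,n}}\bigl[(\tilde\mu^k_\epsilon * \eta_n)\mathbf{1}_{\Y_n} + c_n \mathbf{1}_{\Y_n}\bigr]\,\dd x $$
for a small $c_n>0$, where $\Y_n = \overline{B(0,r_n)}$ and $Z^k_{\epsilon,n}$ normalizes. Then $\mu^k_{\epsilon, n}$ is a smooth density bounded from above and below by positive constants on the smooth strongly convex ball $\Y_n$, and $\Wass_2(\mu^k_{\epsilon, n}, \tilde\mu^k_\epsilon) \to 0$ as $n \to \infty$.

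Let $\hat\phi^k_{\epsilon, n}$ denote the Brenier potential from $\rho$ to $\mu^k_{\epsilon,n}$, normalized so that $\int_\X \hat\phi^k_{\epsilon, n}\,\dd\rho = 0$. Since $\rho$ is smooth, bounded above and below on the smooth strongly convex $\X$, and $\mu^k_{\epsilon,n}$ enjoys the same properties on the smooth strongly convex $\Y_n$, Caffarelli's regularity theorem yields $\hat\phi^k_{\epsilon, n} \in \Class^2(\bar\X)$, strongly convex, with $\nabla\hat\phi^k_{\epsilon, n} \colon \bar\X \to \bar\Y_n$ a $\Class^1$-diffeomorphism. A direct convex-extension procedure --- for example, first $\Class^2$-extending $\hat\phi^k_{\epsilon, n}$ to a slightly larger neighborhood of $\X$ by Whitney extension, then taking a smoothed maximum with a sufficiently large quadratic $\frac{M}{2}\nr{x}^2$ outside --- produces $\phi^k_{\epsilon, n} \in \Class^2(\Rsp^d)$, strongly convex and coinciding with $\hat\phi^k_{\epsilon,n}$ on $\bar\X$; in particular $\mu^k_{\epsilon,n} = \nabla\phi^k_{\epsilon,n\,\#}\rho$ and property (iv) holds by construction.

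Finally extract a diagonal sequence $\phi^k_n := \phi^k_{\epsilon_n, n}$ with $\epsilon_n \to 0$ as $n \to \infty$, so that $\Wass_2(\mu^k_n, \mu^k) \to 0$. Standard optimal transport stability (Villani's Corollary~5.23 \cite{villani2008optimal}) then implies $\hat\phi^k_n \to \phi^k$ uniformly on $\bar\X$, giving (iii) via the common normalization $\int_\X\phi^k_n\,\dd\rho = 0$. The dual potentials $\psi^k_n = (\phi^k_n)^*$ converge uniformly on compact subsets of $\Rsp^d$ to $\psi^k$ and satisfy a uniform linear growth bound $\abs{\psi^k_n(y)} \leq C(1+\nr{y})$ inherited from $\psi^k_n(y) = \sup_{x\in\X}\sca{x}{y} - \hat\phi^k_n(x)$ and the uniform boundedness of $\hat\phi^k_n$ on $\X$; combining this with the $\Wass_2$-convergence of $\mu^k_n$ to $\mu^k$ and the induced convergence of second moments, a dominated-convergence argument yields (i) and (ii). The main obstacle is arranging a single diagonal sequence that is simultaneously $\Class^2(\Rsp^d)$ strongly convex, with gradient mapping $\X$ diffeomorphically to a closed ball, and for which the non-compactly supported dual potentials are controlled uniformly enough to pass to the limit --- the combination of Moreau-Yosida, Caffarelli regularity, and a tailored convex extension is what reconciles these competing requirements.
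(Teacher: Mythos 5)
Your proof follows the same overall scheme as the paper's: Moreau--Yosida regularization of $\phi^0,\phi^1$, then mollification and truncation of the resulting target measures, then Caffarelli's regularity theory to produce $\Class^2$ strongly convex Brenier potentials that are diffeomorphisms onto a ball, then a smooth convex extension to $\Rsp^d$, and finally a diagonal argument. The differences are in execution. First, for the convex extension step the paper cites Yan's extension theorem \cite{yan2012extension}; your Whitney-extension-plus-smoothed-max construction should also work but requires you to verify strong convexity of the smoothed maximum, which is not entirely automatic. Second, and more importantly, the paper exploits a structural identity that you miss: because both $\phi^0$ and $\phi^1$ are regularized by infimal convolution with the \emph{same} quadratic, and $(f \mathbin{\square} g)^* = f^* + g^*$, one has $\psi^0_\alpha - \psi^1_\alpha = \psi^0 - \psi^1$ exactly, for every $\alpha$. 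This makes the passage to the limit in the first stage elementary: the test function in the integrals for (i)--(ii) is a \emph{fixed} Lipschitz function, and one only needs pointwise a.e.\ convergence of $\nabla\phi^k_\alpha$ plus the monotone gradient bound of Lemma~\ref{lemma:MY}.(iv) to invoke dominated convergence --- no uniform convergence of potentials is needed, and (iii) for the Moreau stage is a one-line estimate from the infimal-convolution formula. Your argument, by collapsing both regularizations into a single diagonal limit, forces you to handle a moving integrand against a moving, not-uniformly-compactly-supported measure; this requires a uniform-integrability argument using the convergence of second moments implicit in $\Wass_2$-convergence, which you gesture at but do not supply, and it also requires uniform convergence of the Brenier potentials $\hat\phi^k_n\to\phi^k$ on $\X$, which does not follow directly from Villani's Corollary~5.23 (that result concerns stability of plans/maps, not $\L^\infty$ potential convergence). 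The paper sidesteps both difficulties by performing the second regularization at \emph{fixed} $\alpha$, so that all measures $\mu^k_{\alpha,\beta}$ remain in the fixed compact $\Y_\alpha$ and the Kantorovich potential stability result (Santambrogio Theorem 1.51) applies cleanly. Finally, your choice of a growing truncation ball $\Y_n = \overline{B(0,r_n)}$ with $r_n\to\infty$ is unnecessary --- after Moreau--Yosida the support of $\tilde\mu^k_\epsilon$ is already compact because $\nabla\phi^k_\epsilon$ is Lipschitz --- and it needlessly degrades the lower bound on the mollified densities.
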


Before proving this proposition, we recall some facts regarding Moreau-Yosida's regularization of convex functions. Quoting Section 3.4 of
\cite{attouch1984variational},  the
Moreau-Yosida regularization of parameter $\lambda > 0$ of a closed
and proper convex function $f: \Rsp^d \to \Rsp \cup\{+\infty\}$ is
defined for all $x \in \Rsp^d$ by infimum convolution of the function
$f$ with $\frac{1}{2\lambda} \nr{\cdot}^2$:
\begin{align*}
    f_\lambda(x) &= \min_{u \in \Rsp^d} f(u) + \frac{1}{2\lambda} \nr{u - x}^2.%\\
\end{align*}
The next lemma gathers a few properties of the Moreau-Yosida regularisation.

\begin{lemma}\label{lemma:MY}
Let $f : \Rsp^d \to \Rsp \cup \{+\infty\}$ be a closed and proper convex function and let $\lambda > 0$. Then,
  \begin{enumerate}[label=(\roman*)]
    \item \label{it:MY-conjugate} $f_\lambda = (f^* + \frac{\lambda}{2} \nr{\cdot}^2)^*$,
    \item \label{it:MY-pointwise-lim} for all $x \in \Rsp^d,$ $\lim_{\lambda \to 0} f_\lambda (x) = f(x), $
    \item \label{it:MY-lip-gradient} $f_\lambda \in \Class^{1,1}(\Rsp^d)$ and more
      precisely, $\nabla f_\lambda$ is  $\frac{1}{\lambda}$-Lipschitz,
    \item \label{it:cv-grad-MY}if $f$ is differentiable at  $x \in \Rsp^d$, then $\lim_{\lambda \to 0} \nabla f_\lambda (x) = \nabla f(x),$
    \item \label{it:bound-grad-MY}if $f$ is differentiable at  $x \in \Rsp^d$, then  $\nr{\nabla f_\lambda(x)} \leq \nr{\nabla f(x)}$.
  \end{enumerate}
\end{lemma}

\begin{proof}
Point \ref{it:MY-conjugate} is found in Proposition 3.3 of \cite{attouch1984variational}, points \ref{it:MY-pointwise-lim} and \ref{it:MY-lip-gradient} are found in Theorem 3.24 of \cite{attouch1984variational} and \ref{it:cv-grad-MY} and \ref{it:bound-grad-MY} can be found in Proposition 2.6 of \cite{brezis1973ope}.
\end{proof}

\begin{proof}[Proof of Proposition \ref{prop:reg-psi}]
  \emph{First regularization and truncation.}  We will first
  approximate and extend the Brenier potentials $\phi^0, \phi^1$,
  which are defined on $\X$, with elements of
  $\Class^{1,1}(\Rsp^d)$. To do so, we extend $\phi^0,\phi^1$ by
  $+\infty$ outside of the set $\X$ and for any $\alpha>0$ we denote
  by $\phi^k_\alpha$ the Moreau-Yosida regularization of $\phi^k$ with
  parameter $\alpha$. We let $\mu^k_\alpha = (\nabla
  \phi^k_{\alpha})_{\#}\rho$ and define $\psi^k_\alpha$ as the convex
  convex conjugate of $\phi^k_\alpha$. By Lemma~\ref{lemma:MY},
  $\nabla\phi^k_\alpha$ is Lipschitz on the bounded domain $\X$,
  implying that the images of $\nabla \phi^k_\alpha(\X)$ are contained
  in a closed ball $\Y_\alpha = B(0,R_\alpha)$. We now prove the
  claimed convergences \ref{it:cv-potentials}---\ref{it:bound-phi}, relying mainly on the dominated convergence theorem.
  We first note that if $f: \Rsp^d\to\Rsp$ satisfies the growth 
  condition $\abs{f(x)} \leq C(1+\nr{x}^2)$ for some constant $C$,
  \begin{equation} \label{eq:dominated-convergence}
       \sca{f}{\mu^k_\alpha}  = \int_{\X} f(\nabla \phi^k_\alpha) \dd \rho \xrightarrow[\alpha\to 0]{}  
  \int_{\X} f(\nabla \phi^k) \dd \rho = \sca{f}{\mu^k}.
  \end{equation}
Indeed, \cref{lemma:MY}.\ref{it:cv-grad-MY} ensures that for every point $x\in\X$ where $\phi^k$ is differentiable, thus for $\rho$-almost every point $x$, 
one has $\lim_{\alpha\to 0} \nabla \phi^k_\alpha(x) = \nabla \phi^k(x)$. Besides,  for all such $x$, \cref{lemma:MY}.\ref{it:bound-grad-MY} gives
$$ \abs{f(\nabla \phi^k_\alpha(x))} \leq C\left(1+\nr{\nabla \phi^k_\alpha(x)}^2\right) \leq C\left(1+\nr{\nabla \phi^k(x)}^2\right).$$
Moreover, 
$$ \int_{\X} \left( 1+\nr{\nabla \phi^k(x)}^2 \right) \dd\rho(x) \leq 1+M_2(\nabla \phi^k(x)_{\#} \rho) = 1 + M_2(\mu^k) < +\infty.$$
Thus, the dominated convergence theorem ensures that \eqref{eq:dominated-convergence} holds.

\ref{it:cv-potentials} By \cref{lemma:MY}.\ref{it:MY-conjugate}, $\psi^0_\alpha - \psi^1_\alpha = \phi^{0*}_\alpha - \phi^{1*}_\alpha = \psi^0 + \frac{\alpha}{2}\nr{\cdot}^2 - \psi^1 - \frac{\alpha}{2}\nr{\cdot}^2 = \psi^0 - \psi^1$. Since $\psi^0,\psi^1$ are convex conjugates of functions defined
on the compact set $\X$, the functions $\psi^0$ and $\psi^1$ are
(globally) Lipschitz on $\Rsp^d$. Thus $f = \psi^0_\alpha - \psi^1_\alpha$  is also Lipschitz, and therefore satisfies a growth condition of 
the form $\abs{f}\leq C(1+\nr{x})$. By an application of \eqref{eq:dominated-convergence}, we get 
$$
\lim_{\alpha\to 0} \sca{\psi^0_\alpha -  \psi^1_\alpha}{\mu^1_\alpha - \mu^0_\alpha} =
 \lim_{\alpha\to 0} \sca{\psi^0 -  \psi^1}{\mu^1_\alpha - \mu^0_\alpha} = \sca{\psi^0 - \psi^1}{\mu^1 - \mu^0}. $$

\ref{it:cv-var}  We use $\Var_\mu(f) = \int f^2 \dd\mu - (\int f\dd\mu)^2$. Letting $f$ as in the previous item, we get
\begin{align*}
    \begin{aligned}
    &
    \Var_{\frac{1}{2}(\mu^0_\alpha+\mu^1_\alpha)} (\psi^1_\alpha - \psi^0_\alpha) = \sca{f^2}{\frac{1}{2}(\mu^0_\alpha+\mu^1_\alpha)}  - \sca{f}{\frac{1}{2}(\mu^0_\alpha+\mu^1_\alpha)}^2,\\
    &\Var_{\frac{1}{2}(\mu^0+\mu^1)} (\psi^1 - \psi^0) = \sca{f^2}{\frac{1}{2}(\mu^0+\mu^1)}  - \sca{f}{\frac{1}{2}(\mu^0+\mu^1)}^2.
    \end{aligned}
\end{align*}
Since $f$ is Lipschitz, both $f$ and $f^2$ satisfy the growth condition allowing us to apply \eqref{eq:dominated-convergence}. We therefore get
$$ \lim_{\alpha\to 0} \Var_{\frac{1}{2}(\mu^0_\alpha+\mu^1_\alpha)} (\psi^1_\alpha - \psi^0_\alpha) = \Var_{\frac{1}{2}(\mu^0+\mu^1)} (\psi^1 - \psi^0).$$

\ref{it:bound-phi} We note that for $k \in \{0, 1\}$ and $x \in
\X$, $\mphi \leq \phi^k_\alpha(x) \leq \Mphi$. This is a simple
consequence of the definition of the Moreau-Yosida regularization
$\phi^k_\alpha$ as an infimum convolution. Indeed for any $x \in \X$, we
have on one hand:
\begin{align*}
  \phi^k_\alpha(x) &= \inf_{x' \in \X} \left( \phi^k(x') + \frac{1}{2 \alpha} \nr{x - x'}^2 \right) \\
  &\geq \inf_{x' \in \X} \phi^k(x')  + \inf_{x' \in \X}  \frac{1}{2 \alpha} \nr{x - x'}^2  \geq \mphi.
\end{align*}
On the other hand,
\begin{align*}
    \phi^k_\alpha(x) &= \inf_{x' \in \X} \left( \phi^k(x') + \frac{1}{2 \alpha} \nr{x - x'}^2 \right) \leq \phi^k(x) + \frac{1}{2 \alpha} \nr{x - x}^2 \leq \Mphi.
\end{align*}
We have all the desired properties \ref{it:cv-potentials}-\ref{it:bound-phi} but the potentials
$\phi^k_\alpha$ are not strongly convex and $\Class^2$ on $\Rsp^d$:
they are merely $\Class^{1,1}$. Moreover, the property \ref{it:support} does not
hold. These properties will be obtained thanks to a second regularization.

\emph{Second regularization.}  From now on, we fix some $\alpha > 0$,
and we denote $\Y_\alpha = B(0,R_\alpha)$ a closed ball that contains
the supports of $\mu^0_\alpha$ and $\mu^1_\alpha$. To construct the
regularization of $\phi^k_\alpha$ we will regularize the measures
$\mu^k_\alpha$ and solve an optimal transport problem. We first note
that it is straightforward, e.g. using a simple convolution and
truncation, to approximate the probability measures $\mu^k_\alpha$ on
$\Y_\alpha$ by smooth probability densities $\mu^k_{\alpha,\beta}$
supported on $\Y_\alpha$, bounded away from zero and
infinity on $\Y_\alpha$
and such that $\lim_{\beta \to 0} \Wass_2(\mu^k_{\alpha,\beta}, \mu^k_{\alpha}) = 0$. By
Caffarelli's regularity theory (e.g. Theorem 3.3 in
\cite{de2014monge}), the optimal transport map $T_{\alpha,\beta}$
between $\rho$ and $\mu^k_{\alpha,\beta}$ is the gradient of a
strongly convex potential $\phi^k_{\alpha,\beta}$ belonging to $\Class^2(\X)$ and is actually a diffeomorphism between $\X$ and $\Y_\alpha$. By Theorem 4.4 in \cite{yan2012extension}, the potential
$\phi^k_{\alpha,\beta}$ can be extended into a $\Class^2$ strongly
convex function on $\Rsp^d$. By stability of Kantorovich potentials
(Theorem 1.51 in \cite{santambrogio2015optimal}), taking a subsequence
if necessary, we can assume that $\phi^k_{\alpha,\beta}$ converges
uniformly to $\phi^k_\alpha$ on $\X$ as $\beta\to 0$.  Since $\nabla
\phi^k_{\alpha,\beta}$ sends $\rho$ to the measure
$\mu^k_{\alpha,\beta}$, which is supported on $B(0,R_\alpha)$, we get
$\nr{\nabla\phi^k_{\alpha,\beta}} \leq R_\alpha$. Moreover, since the convex
function $\phi^k_{\alpha,\beta}$ converges uniformly to
$\phi^k_\alpha$ as $\beta\to 0$, we get
$$ \hbox{ for a.e. } x\in \X, \lim_{\beta\to 0} \nabla \phi^k_{\alpha,\beta}(x) = \nabla \phi^k_\alpha(x).$$ 
%  $$ \hbox{ for a.e. } x\in X, \nr{\nabla \phi^k_{\alpha,\beta}}\leq R_\alpha.$$
This convergence result is also induced by the stability of optimal transport maps \cite[Corollary 5.21]{villani2008optimal}, since $\lim_{\beta \to 0} \Wass_2(\mu^k_{\alpha,\beta}, \mu^k_{\alpha}) = 0$ and $\nabla \phi^k_{\alpha, \beta}$ (resp. $\nabla \phi^k_\alpha$) is the optimal transport map between $\rho$ and $\mu^k_{\alpha,\beta}$ (resp. $\rho$ and $\mu^k_{\alpha}$). From these two properties we get as above the desired convergence properties: denoting with $\psi^k_{\alpha,\beta}$ the convex conjugate of $\phi^k_{\alpha,\beta}$ for $k \in \{0,1\}$,
\begin{enumerate}[label = (\roman*)]
%\item  for $k \in \{0, 1\}$, $\lim_{\beta\to 0} \Wass_2(\mu^k_{\alpha,\beta}, \mu^k_\alpha) = 0$ 
\item  $\lim_{\beta\to 0} \sca{\psi^0_{\alpha,\beta} - \psi^1_{\alpha,\beta}}{\mu^1_{\alpha,\beta} - \mu^0_{\alpha,\beta}} = \sca{\psi^0_\alpha - \psi^1_\alpha}{\mu^1_\alpha - \mu^0_\alpha}$,
    \item  $\lim_{\beta\to 0}  \Var_{\frac{1}{2}(\mu^0_{\alpha,\beta}+\mu^1_{\alpha,\beta})}(\psi^1_{\alpha,\beta} - \psi^0_{\alpha,\beta}) = \Var_{\frac{1}{2}(\mu^0_\alpha+\mu^1_\alpha)}(\psi^1_\alpha - \psi^0_\alpha)$,
    \item $\mphi \leq \liminf_{\beta\to 0}\min_{\X} \phi^k_{\alpha,\beta}(x) \leq \limsup_{\beta\to 0}\max_{\X} \phi^k_{\alpha,\beta}(x) \leq \Mphi$.
\end{enumerate}
The sequence in the statement of the proposition is finally constructed using a diagonal argument: for $n \geq 1$, consider $\alpha = \alpha_n = \frac{1}{n}$ and denote
\begin{align*}
    \eps_n &= \abs{ \sca{\psi^0_{\alpha_n} - \psi^1_{\alpha_n}}{\mu^1_{\alpha_n} - \mu^0_{\alpha_n}} - \sca{\psi^0 - \psi^1}{\mu^1 - \mu^0} } \\
    &+ \abs{ \Var_{\frac{1}{2}(\mu^0_{\alpha_n}+\mu^1_{\alpha_n})}(\psi^1_{\alpha_n} - \psi^0_{\alpha_n}) - \Var_{\frac{1}{2}(\mu^0_n+\mu^1_n)}(\psi^1 - \psi^0) } \\
    &+ (m_\phi - \min_{\X} \min_k \phi^k_{\alpha_n})_+ + (\max_{\X} \max_k \phi^k_{\alpha_n} - M_\phi)_+,
\end{align*}
where $x_+ = \max(0, x)$. By the first regularization argument above, $\eps_n$ goes to $0$ as $n$ goes to infinity. Then for a given $n \geq 1$, choose $\beta = \beta_n$ small enough such that 
\begin{align*}
    \eps_n &\geq \abs{ \sca{\psi^0_{\alpha_n, \beta_n} - \psi^1_{\alpha_n, \beta_n}}{\mu^1_{\alpha_n, \beta_n} - \mu^0_{\alpha_n, \beta_n}} - \sca{\psi^0_{\alpha_n} - \psi^1_{\alpha_n}}{\mu^1_{\alpha_n} - \mu^0_{\alpha_n}} } \\
    &+ \abs{ \Var_{\frac{1}{2}(\mu^0_{\alpha_n, \beta_n}+\mu^1_{\alpha_n, \beta_n})}(\psi^1_{\alpha_n, \beta_n} - \psi^0_{\alpha_n, \beta_n}) - \Var_{\frac{1}{2}(\mu^0_{\alpha_n}+\mu^1_{\alpha_n})}(\psi^1_{\alpha_n} - \psi^0_{\alpha_n}) } \\
    &+ (\min_{\X} \min_k \phi^k_{\alpha_n} - \min_{\X} \min_k \phi^k_{\alpha_n, \beta_n})_+ + (\max_{\X} \max_k \phi^k_{\alpha_n, \beta_n} - \max_{\X} \max_k \phi^k_{\alpha_n})_+.
\end{align*}
By the second argument above, such a $\beta_n$ always exists. Then the sequences $(\phi^0_n)_{n\geq 1}, (\phi^1_n)_{n\geq 1}$defined for all $n \geq 1$ by $\phi^0_n = \phi^0_{\alpha_n, \beta_n}$ and $\phi^1_n = \phi^1_{\alpha_n, \beta_n}$ satisfy the regularity properties and convergence results of the statement.%\todo{vérifier}
\end{proof}

\begin{proposition} \label{prop:Poincare-smooth2} 
In addition to the assumptions of \cref{th:Poincare-ineq}, assume that $\X$ is a smooth and strongly convex set and that the density $\rho$ is smooth. Then,
\eqref{eq:Poincaré-psi} holds.
\end{proposition}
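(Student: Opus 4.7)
The plan is to combine the two preceding results: Proposition \ref{prop:Poincare-ineq-smooth} proves the strong convexity estimate under the additional regularity assumptions (strongly convex $\Class^2$ potentials with gradients inducing diffeomorphisms onto a closed ball), and Proposition \ref{prop:reg-psi} provides an approximating sequence $(\phi^0_n, \phi^1_n)$ of such regularized potentials. It remains only to apply the smooth version to each term of the approximating sequence and pass to the limit.

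More precisely, under the hypotheses of Proposition \ref{prop:Poincare-smooth2}, the set $\X$ is smooth and strongly convex and $\rho$ is a smooth density bounded away from zero and infinity, so the assumptions of Proposition \ref{prop:reg-psi} are met. Let $(\phi^0_n, \phi^1_n)$ be the sequence it produces, along with the associated $\mu^k_n = (\nabla \phi^k_n)_\# \rho$, $\psi^k_n = (\phi^k_n)^*$, $\mphi_n$, $\Mphi_n$, and closed ball $\Y_n$. By construction, for each $n$ the potentials $\phi^0_n, \phi^1_n$ are strongly convex, belong to $\Class^2(\Rsp^d)$, and their gradients realize diffeomorphisms between $\X$ and $\Y_n$. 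Moreover item \ref{it:bound-phi} of Proposition \ref{prop:reg-psi} gives $\mphi \leq \liminf_n \mphi_n$, so since $\mphi > 0$ we have $\mphi_n > 0$ for all sufficiently large $n$. Hence for such $n$ the full hypotheses of Proposition \ref{prop:Poincare-ineq-smooth} are satisfied by $\phi^0_n, \phi^1_n$, which therefore yields
$$
\Var_{\mu^0_n + \mu^1_n}(\psi^1_n - \psi^0_n) \leq C_d \frac{\Mrho^2}{\mrho^2} (\Mphi_n - \mphi_n) \sca{\psi^0_n - \psi^1_n}{\mu^1_n - \mu^0_n}.
$$

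To conclude, I pass to the limit $n \to +\infty$ in the preceding inequality. The left-hand side converges to $\Var_{\mu^0+\mu^1}(\psi^1-\psi^0)$ by item \ref{it:cv-var} of Proposition \ref{prop:reg-psi}, while $\sca{\psi^0_n - \psi^1_n}{\mu^1_n - \mu^0_n}$ converges to $\sca{\psi^0 - \psi^1}{\mu^1 - \mu^0}$ by item \ref{it:cv-potentials}. For the prefactor $\Mphi_n - \mphi_n$, item \ref{it:bound-phi} gives $\limsup_n \Mphi_n \leq \Mphi$ and $\liminf_n \mphi_n \geq \mphi$, hence $\limsup_n (\Mphi_n - \mphi_n) \leq \Mphi - \mphi$. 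Taking the $\limsup$ of the right-hand side and the limit of the left-hand side therefore yields
$$
\Var_{\mu^0 + \mu^1}(\psi^1 - \psi^0) \leq C_d \frac{\Mrho^2}{\mrho^2} (\Mphi - \mphi) \sca{\psi^0 - \psi^1}{\mu^1 - \mu^0},
$$
which is exactly \eqref{eq:Poincaré-psi}.

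Since the technical work has already been carried out in Propositions \ref{prop:Poincare-ineq-smooth} and \ref{prop:reg-psi}, the only substantive verification here is the compatibility between the convergence statements of Proposition \ref{prop:reg-psi} and the inequality of Proposition \ref{prop:Poincare-ineq-smooth}; in particular, one has to make sure that the bounds $\mphi_n, \Mphi_n$ approximating $\mphi, \Mphi$ enter multiplicatively (through $\Mphi_n - \mphi_n$) so that no cancellation or indeterminate form obstructs the passage to the limit. This is indeed the case, so no real obstacle arises.
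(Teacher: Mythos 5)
Your proof is correct and follows essentially the same route as the paper's: apply Proposition~\ref{prop:reg-psi} to obtain approximating smooth strongly convex potentials, invoke Proposition~\ref{prop:Poincare-ineq-smooth} for each $n$, and pass to the limit using items \ref{it:cv-potentials}--\ref{it:support}. Your treatment is in fact slightly more careful than the paper's terse "all the terms converge," since you explicitly observe that $\mphi_n>0$ for large $n$ (needed to apply Proposition~\ref{prop:Poincare-ineq-smooth}), and that the one-sided bounds on $\mphi_n,\Mphi_n$ from item \ref{it:bound-phi} suffice because the factor $\sca{\psi^0_n-\psi^1_n}{\mu^1_n-\mu^0_n}$ has a nonnegative limit, so taking a $\limsup$ of the right-hand side is legitimate.
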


\begin{proof}
Let $\phi^0_n,\phi^1_n$ be the sequence of $\Class^2$ and strongly convex potentials constructed by \cref{prop:reg-psi}, converging respectively to $\phi^0$ and $\phi^1$, and such that
$\nabla\phi^0_n,\nabla\phi^1_n$ are diffeomorphisms from $\X$ to a ball $\Y_n$. By Proposition~\ref{prop:Poincare-ineq-smooth}, \eqref{eq:Poincaré-psi} holds for $\phi^0_n,\phi^1_n$:
$$\Var_{\mu^0_n+\mu^1_n} (\psi^1_n - \psi^0_n) \leq C_d \frac{ \Mrho^2}{\mrho^2} ({\Mphi}_n- {\mphi}_n) \sca{\psi^0_n - \psi^1_n}{\mu^1_n - \mu^0_n}.$$
By the claims (i)-(iv) in \cref{prop:reg-psi}, all the terms in this inequality converge as $n \to +\infty$ and establish \eqref{eq:Poincaré-psi} in the limit.
\end{proof}

\begin{proof}[Proof of \cref{th:Poincare-ineq}] 
Let $\X$ be a bounded convex set and assume that $\rho$ is a probability density satisfying $\mrho \leq \rho \leq \Mrho$. We extend $\rho$ by $\mrho$ outside of $\X$.
One can construct a sequence $\X_n$ of smooth and strongly convex sets included in $\X$ and converging to $\X$  in the Hausdorff sense as $n\to +\infty$ \cite[\S3.3]{schneider}. 
Let $K$ be a smooth, non-negative and compactly supported function, $K_n(x) = n^d K(nx)$ and define
$$ \rho_n = \frac{1}{Z_n} \restr{(\rho * K_n)}_{\X_n},  {\mrho}_n = \frac{\mrho}{Z_n}, {\Mrho}_n = \frac{\Mrho}{Z_n}, $$
where $Z_n$ is a constant ensuring that $\rho_n$ belongs to $\Prob(\X_n)$. We define $\mu^k_n = \nabla \phi^k_{\#}\rho_n$. Applying  
Proposition~\ref{prop:Poincare-smooth2} to $(\X_n,\rho_n)$ and $(\phi^0,\phi^1)$, we have:
\begin{equation}\label{eq:Poincaré-psin}
    \Var_{\mu^0_{n}+\mu^1_{n}} (\psi^1 - \psi^0) \leq C_d \frac{{\Mrho}_n^2}{{\mrho}_n^2} (\Mphi - \mphi) \sca{\psi^0 - \psi^1}{\mu^1_{n} - \mu^0_{n}}.
\end{equation}  By construction, $\lim_{n\to +\infty} Z_n = 1$ and $\rho_n$ converges to $\rho$ in $\L^1(\X)$. Thus up to subsequences, $\rho_n$ converges pointwise almost everywhere to $\rho$. Setting $f = \psi^0 - \psi^1$, we have
$$\sca{\psi^0 - \psi^1}{\mu^0_{n}} 
= \int_\X f(\nabla \phi^0) \rho_n(x) \dd x \xrightarrow{n\to +\infty} \int_\X f(\nabla \phi^0) \rho(x)\dd x =  \sca{\psi^0 - \psi^1}{\mu^0}. 
$$ 
The limit in the above equation is proven as in   Proposition~\ref{prop:reg-psi}, using that $f$ is Lipschitz, that $M_2(\mu_0)<+\infty$ and applying the dominated convergence theorem.
All the terms can be dealt with in a similar manner. Taking the limit $n\to +\infty$ in \eqref{eq:Poincaré-psin} gives the desired \eqref{eq:Poincaré-psi}. 
\end{proof}

\section{Stability of potentials}

\label{sec:stab-dual}

A direct consequence of the strong convexity estimate of Theorem \ref{th:Poincare-ineq} is a quantitative stability result on the dual potential $\psi_\mu$ with respect to the target measure $\mu$. This estimate on dual potentials is readily transferred to the Brenier (primal) potentials thanks to the next proposition. We refer to \cref{def:Brenier} for a definition of the potentials.

\begin{proposition} \label{prop:bound-lp-primal-dual} 
Let $\rho$ be a probability density over a compact convex set $\X$, and let $\phi^0,\phi^1$ be convex functions on $\X$. 
Denote $\psi^k$ the convex conjugate of $\phi^k$ and $\mu^k$ the image of $\rho$ under $\nabla \phi^k$. Then for any $p>0$,  %\todo{rapporteur O demande s'il faut $p>1$, mais il me semble que non}
$$ \nr{\phi^1 - \phi^0}_{\L^p(\rho)} \leq \nr{ \psi^1 - \psi^0}_{\L^p(\mu^0 + \mu^1)}. $$
In particular, 
$$ \frac{1}{2} \Var_\rho(\phi^1 - \phi^0) \leq \Var_{\frac{1}{2}(\mu^0+\mu^1)}(\psi^1 - \psi^0).$$
\end{proposition}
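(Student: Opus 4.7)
The approach I would take is to upgrade the Fenchel-Young duality into a pointwise two-sided bound on $\phi^1-\phi^0$ and then integrate. For $\rho$-almost every $x\in\X$, both $\phi^0$ and $\phi^1$ are differentiable (convex functions are differentiable a.e., and $\rho$ is absolutely continuous with respect to Lebesgue), so Fenchel-Young equality gives
$$\phi^k(x) + \psi^k(\nabla\phi^k(x)) = \sca{x}{\nabla\phi^k(x)}, \quad k\in\{0,1\},$$
while Fenchel-Young inequality gives $\phi^k(x) + \psi^k(y) \geq \sca{x}{y}$ for any $y\in\Rsp^d$. Taking $k=1$ with $y=\nabla\phi^0(x)$ and subtracting the equality for $k=0$ produces $(\phi^0-\phi^1)(x)\leq (\psi^1-\psi^0)(\nabla\phi^0(x))$; swapping the roles of $0$ and $1$ yields the reverse inequality, so altogether
$$(\psi^1-\psi^0)(\nabla\phi^1(x)) \leq (\phi^0-\phi^1)(x) \leq (\psi^1-\psi^0)(\nabla\phi^0(x)).$$

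From this sandwich the $\L^p$ bound follows quickly. Since the middle term is a convex combination of the two outer terms, for any $C\in\Rsp$ and any $p\geq 1$ the convexity of $t\mapsto |t|^p$ gives
$$|(\phi^1-\phi^0)(x) - C|^p \leq |(\psi^1-\psi^0)(\nabla\phi^0(x)) - C|^p + |(\psi^1-\psi^0)(\nabla\phi^1(x)) - C|^p.$$
Integrating against $\rho$ and using the defining relation $\mu^k=(\nabla\phi^k)_\#\rho$, the right-hand side becomes $\int |\psi^1-\psi^0 - C|^p\,\dd(\mu^0+\mu^1)$. Setting $C=0$ and taking $p$-th roots yields the claimed inequality $\nr{\phi^1-\phi^0}_{\L^p(\rho)} \leq \nr{\psi^1-\psi^0}_{\L^p(\mu^0+\mu^1)}$.

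For the variance statement I would keep $C$ free and specialize to $p=2$. Since $\rho$ is a probability measure, $\Var_\rho(\phi^1-\phi^0) = \min_C \int (\phi^1-\phi^0 - C)^2\dd\rho$, which is bounded above by $\int (\phi^1-\phi^0 - C^*)^2\dd\rho$ for the specific $C^*$ minimizing the right-hand side of the previous display. Applying the pointwise bound at $C=C^*$ and integrating then gives $\Var_\rho(\phi^1-\phi^0) \leq \min_C \int (\psi^1-\psi^0 - C)^2 \dd(\mu^0+\mu^1) = \Var_{\mu^0+\mu^1}(\psi^1-\psi^0)$, under the natural identification of the variance of a finite measure with the minimum squared deviation.

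I do not anticipate a substantive obstacle: the sandwich is an almost immediate consequence of Fenchel-Young, and everything after it reduces to a change-of-variables together with the convexity of $|\cdot|^p$. The only care points are the $\rho$-almost everywhere differentiability of the convex functions $\phi^k$ on $\X$ (which is automatic via Alexandrov/Rademacher combined with absolute continuity of $\rho$), the fact that the $\psi^k$ are finite on all of $\Rsp^d$ so that composition with $\nabla\phi^{1-k}$ is well defined, and pinning down the convention for the variance of the non-probability measure $\mu^0+\mu^1$ so that the $\min$-of-squared-deviation identification used above is consistent with the rest of the paper.
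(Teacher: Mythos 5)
Your proof is built on the same Fenchel--Young comparisons as the paper's and is correct for $p\geq 1$, but as written it leaves a gap for $0<p<1$, a range the statement also claims: the step $|m-C|^p\leq |a-C|^p+|b-C|^p$ for $m$ between $a$ and $b$ is justified in your argument by convexity of $t\mapsto|t|^p$, which fails when $p<1$. The estimate is nevertheless true for all $p>0$, because $|m-C|\leq\max(|a-C|,|b-C|)$ whenever $m\in[a,b]$ and $\max(|a-C|,|b-C|)^p\leq |a-C|^p+|b-C|^p$ by monotonicity of $t\mapsto t^p$ on $[0,+\infty)$; replace the convexity appeal by this and your proof covers all $p>0$. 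The paper's own proof avoids the issue by a different bookkeeping: it partitions $\X$ into $A=\{\phi^1\geq\phi^0\}$ and its complement, and on $A$ retains only the contribution $\int_A\abs{\psi^1(\nabla\phi^1)-\psi^0(\nabla\phi^1)}^p\,\dd\rho$ to $\nr{\psi^1-\psi^0}_{\L^p(\mu^0+\mu^1)}^p$ (where the integrand dominates $(\phi^1-\phi^0)^p\geq 0$), symmetrically on $\X\setminus A$, so every step is monotonicity of $t^p$ on the nonnegative half-line. Finally, a cosmetic sign slip: your sandwich controls $(\phi^0-\phi^1)(x)$, not $(\phi^1-\phi^0)(x)$, so the next display should have $\abs{(\phi^0-\phi^1)(x)-C}^p$ on the left (or keep $\phi^1-\phi^0$ and replace $C$ by $-C$ in the outer terms); since $\L^p$-norms and variances are invariant under a global sign change, the final conclusion is unaffected.
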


Before proving this proposition, let us mention some consequences. The stability estimates resulting from Theorem \ref{th:Poincare-ineq} and this proposition are expressed in Corollary \ref{cor:stability-potentials} in terms of variance for the potentials and $1$-Wasserstein distance for the target measures. Assuming that one of the target measures is absolutely continuous with respect to the other, these estimates can also be expressed in term of $\chi^2$ or Kagan's divergence of the target measures. The $\chi^2$ divergence reduces to the $\chi^2$ test-statistic used for goodness of fit testing when the compared measures are finitely and commonly supported and one of them is observed empirically. Note that such divergence can be interpreted as the \emph{square of a divergence}, noting for instance that the total variation distance is only $\frac{1}{2}$-Hölder stable with respect to it \cite{comp_OT}.%\todo{rapporteur Z voulait ref sur divergence du $\chi^2$ ou au moins commentaire sur définition}.
\begin{corollary}[Stability of potentials]
\label{cor:stability-potentials}
Let $\rho$ be a probability density over a  compact convex set $\X$, satisfying $0 < \mrho \leq \rho \leq \Mrho$ and let $\mu^0, \mu^1 \in \Prob_2(\Rsp^d)$. For $k \in \{0, 1\}$, denote $\phi^k = \phi_{\mu^k}$ the Brenier potential between $\rho$ and $\mu^k$. Assume that $\phi^0,\phi^1$ satisfy \eqref{eq:hyp-phi} and denote 
$\psi^0$ and $\psi^1$  the convex conjugates of $\phi^0$ and $\phi^1$. Then,
\begin{equation*}
    \Var_{\rho}(\phi^1 - \phi^0) \leq 2\Var_{\frac{1}{2}(\mu^0+\mu^1)}(\psi^1 - \psi^0) \leq C_{d, \rho} \diam(\X)  (\Mphi - \mphi)\Wass_1(\mu^0, \mu^1) % \left( C_d \frac{ \Mrho^2}{\mrho^2} (\Mphi - \mphi) \diam(\X)\right) \Wass_1(\mu^0, \mu^1) \label{eq:stab-potentials-wass},
\end{equation*}
with $C_{d, \rho} = e(d+1)2^{d} \frac{\Mrho^2}{\mrho^2}$. Assuming additionally that $\mu^1$ is absolutely continuous w.r.t. $\mu^0$, then
\begin{equation*}
    \Var_{\rho}(\phi^1 - \phi^0) \leq 2 \Var_{\frac{1}{2}(\mu^0+\mu^1)}(\psi^1 - \psi^0) \leq C_{d, \rho}^2 (\Mphi - \mphi)^2 D_{\chi^2} (\mu^1 \vert \mu^0) %\left( C_d \frac{\Mrho^2}{\mrho^2} (\Mphi - \mphi) \right)^2 D_{\chi^2} (\mu^1 \vert \mu^0) \label{eq:stab-potentials-chi-2},
\end{equation*}
where $D_{\chi^2} (\mu^1 \vert \mu^0)$ stands for the $\chi^2$ or Kagan's divergence from $\mu^1$ to $\mu^0$.

\end{corollary}

\begin{proof} %[Proof of Corollary \ref{cor:stability-potentials}]
Proposition \ref{prop:bound-lp-primal-dual} combined with Theorem \ref{th:Poincare-ineq} give the inequalities
\begin{align*}
       \Var_\rho(\phi^1 - \phi^0) &\leq 2 \Var_{\frac{1}{2}(\mu^0+\mu^1)} (\psi^1 - \psi^0) \\
       &\leq C_{d, \rho} (\Mphi - \mphi) \sca{\psi^0 - \psi^1}{\mu^1 - \mu^0}. %C_d \frac{ \Mrho^2}{\mrho^2} (\Mphi - \mphi) \sca{\psi^0 - \psi^1}{\mu^1 - \mu^0}.
  \end{align*}
The first estimate follows from Kantorovich-Rubinstein duality result:
for any $x \in \Rsp^d$ and $k \in \{0,1\}$, for any $g^k \in \partial \psi^k(x)$, one has $g^k \in \X$ so that $\psi^1 - \psi^0$ is $\diam(\X)$-Lipschitz continuous. Kantorovich-Rubinstein duality formula then ensures
\begin{align*}
       \sca{\psi^0 - \psi^1}{\mu^1 - \mu^0} \leq \diam(\X) \Wass_1(\mu^0, \mu^1).
\end{align*}
Now notice that if $\mu^1$ is absolutely continuous with respect to $\mu^0$, then we have for any constant $c \in \Rsp$:

\begin{align*}
    \sca{\psi^0 - \psi^1}{\mu^1 - \mu^0} &= \sca{\psi^0 - \psi^1 - c}{\mu^1 - \mu^0} \\
    &=  \int_{\Rsp^d} (\psi^0 - \psi^1 - c) (\frac{\dd \mu^1}{\dd \mu^0} - 1)\dd \mu^0 \\
    &\leq \left( \int_{\Rsp^d} (\psi^0 - \psi^1 - c)^2 \dd \mu^0 \right)^{1/2} \left( \int_{\Rsp^d} (\frac{\dd \mu^1}{\dd \mu^0} - 1)^2 \dd \mu^0 \right)^{1/2} \\
    &= \nr{\psi^0 - \psi^1 - c}_{\L^2(\mu^0)} D_{\chi^2} (\mu^1 \vert \mu^0)^{1/2}\\
    &\leq \sqrt{2}\nr{\psi^0 - \psi^1 - c}_{\L^2(\frac{1}{2}(\mu^0+\mu^1))} D_{\chi^2} (\mu^1 \vert \mu^0)^{1/2}.
\end{align*}
The second estimate comes after minimizing with respect to $c$ in the last inequality:
\begin{align*}
    \sca{\psi^0 - \psi^1}{\mu^1 - \mu^0} &\leq \sqrt{2} \Var_{\frac{1}{2}(\mu^0+\mu^1)}(\psi^1 - \psi^0)^{1/2} D_{\chi^2} (\mu^1 \vert \mu^0)^{1/2}. \qedhere
\end{align*} 
\end{proof}

\begin{proof}[Proof of Proposition \ref{prop:bound-lp-primal-dual}]
Let $A = \{x \in \X \mid \phi^1(x) \geq \phi^0(x) \}$ and let $x\in A$ where $\phi^1$ is differentiable. The Fenchel-Young inequality (and equality) give:
$$ \psi^0(\nabla \phi^1(x)) \geq \sca{x}{\nabla \phi^1(x)} - \phi^0(x) = \psi^1(\nabla \phi^1(x)) + \phi^1(x) - \phi^0(x), $$
which thus ensures that for  almost every  $x \in A$, 
$$ \psi^0(\nabla \phi^1(x)) - \psi^1(\nabla \phi^1(x)) \geq \phi^1(x) - \phi^0(x) \geq 0.$$
Similarly, for almost every $x \in \X\setminus A$, we have  
$$\psi^1(\nabla \phi^0(x)) - \psi^0(\nabla \phi^0(x)) \geq \phi^0(x) - \phi^1(x) \geq 0.$$
From this, we deduce the first statement of the proposition:
\begin{align*}
    \| \psi^1 - \psi^0&\|_{\L^p(\mu^0 + \mu^1)}^p = \int_\X\left( \abs{(\psi^1 - \psi^0)\circ(\nabla \phi^0)}^p +  \abs{(\psi^1 - \psi^0)\circ (\nabla \phi^1)}^p\right)\dd\rho \\
    &\geq \int_{\X\setminus A} \big(\psi^1(\nabla \phi^0) - \psi^0(\nabla \phi^0)\big)^p\dd\rho + \int_A \big(\psi^0(\nabla \phi^1) - \psi^1(\nabla \phi^1)\big)^p\dd\rho \\
    &\geq \int_{\X\setminus A} \big(\phi^0 - \phi^1\big)^p\dd\rho + \int_A \big(\phi^1 - \phi^0\big)^p\dd\rho = \nr{\phi^1 - \phi^0}^p_{\L^p(\rho)}.
\end{align*}
Let $c \in \Rsp$. Having established the previous inequality for any convex functions $\phi^0, \phi^1$ on $\X$, we may replace $\phi^0$ with $\phi^0 - c$ in this inequality, and consequently replace $\psi^0$ with $(\phi^0 - c)^* = \psi^0 + c$. This yields thus for any $c \in \Rsp$,
$$ \nr{\phi^1 - \phi^0 + c}_{\L^p(\rho)} \leq \nr{ \psi^1 - \psi^0 - c}_{\L^p(\mu^0 + \mu^1)}. $$
Taking $c$ that achieves the minimum on the right-hand side, for $p=2$, we get
\begin{align*}
 \frac{1}{2} \Var_\rho(\phi^1 - \phi^0) &\leq \frac{1}{2} \nr{\phi^1 - \phi^0 + c}^2_{\L^2(\rho)} \\
 &\leq  \nr{ \psi^1 - \psi^0 - c}^2_{\L^2(\frac{1}{2}(\mu^0+\mu^1))} = \Var_{\frac{1}{2}(\mu^0+\mu^1)}(\psi^1 - \psi^0). \qedhere      
\end{align*}
\end{proof}

All the stability estimates that have been established so far involve the oscillation of the Brenier potentials $\Mphi-\mphi$. It is then natural to wonder 
under what assumption on a measure $\mu \in \Prob_2(\Rsp^d)$  can we control this oscillation. The next proposition, found in \cite{berman2013real}, shows that a sufficient condition is that $\mu$ admits a finite moment of order $p>d$. This assumption seems nearly tight : Remark~\ref{rem:morrey} below shows that there exists a measure $\mu$ such that $M_p(\mu)<+\infty$ with $p<d$, whose associated Brenier potential is unbounded.

\begin{proposition}[Proposition 2.22 in \cite{berman2013real}]
\label{prop:morrey-phi}
 Let $\rho$ be a probability density over a  compact convex set $\X$, satisfying $0 < \mrho \leq \rho \leq \Mrho$ and let $\mu \in \Prob_2(\Rsp^d)$. Denote $\phi$ the Brenier potential for the quadratic optimal transport between $\rho$ and $\mu$. Assume that there exists $p > d$ and $M_p < +\infty$ such that
$$ M_p(\mu) = \int_{\Rsp^d} \nr{y}^p \dd \mu(y) \leq M_p. $$
Then $\phi$ is Hölder continuous and verifies for all $x, x' \in \X$:
\begin{align*}
    \abs{\phi(x) - \phi(x')} \leq C_{d, p, \X} \left(\frac{M_p}{\mrho}\right)^{1/p}  \nr{x-x'}^{1 - \frac{d}{p}}.
\end{align*} %where $C_{d, p, \X}$ is a constant depending on $d, p$ and $\X$.
In particular, there exists 
$\mphi, \Mphi \in \Rsp$ that can be chosen such that for any $x \in \X$, $\mphi \leq \phi(x) \leq \Mphi$ and such that
$$ \Mphi - \mphi \leq C_{d, p, \X} \left(\frac{M_p}{\mrho}\right)^{1/p} \diam(\X)^{1 - \frac{d}{p}}.$$
\end{proposition}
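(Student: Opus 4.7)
The strategy is to convert the moment bound on $\mu$ into an $\L^p$ control on $\nabla \phi$ via the pushforward relation $\nabla \phi_\# \rho = \mu$, and then invoke a Morrey-type embedding: since $p > d$ and $\X$ is a bounded convex (hence Lipschitz) domain, functions with an $\L^p$ gradient are automatically $(1-d/p)$-Hölder continuous on $\bar\X$, with explicit control on the Hölder seminorm.

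The first step is a direct change of variables for the pushforward: since $\nabla\phi$ sends $\rho$ to $\mu$,
\begin{equation*}
\int_\X \nr{\nabla \phi(x)}^p \dd\rho(x) = \int_{\Rsp^d}\nr{y}^p \dd\mu(y) = M_p(\mu) \leq M_p,
\end{equation*}
so the lower bound $\rho \geq \mrho$ on $\X$ gives $\|\nabla \phi\|_{\L^p(\X,\dd x)} \leq (M_p/\mrho)^{1/p}$. The second step is Morrey's embedding on the bounded convex domain $\X$: for $p > d$, any $u$ with $\nabla u \in \L^p(\X)$ admits a representative in $\Class^{0,1-d/p}(\bar\X)$ satisfying
\begin{equation*}
|u(x) - u(x')| \leq C_{d,p,\X}\, \|\nabla u\|_{\L^p(\X)}\, \nr{x-x'}^{1 - d/p}, \quad \forall x, x' \in \bar\X.
\end{equation*}
Applying this to $\phi$ and combining with the $\L^p$ bound from the previous step immediately yields the claimed Hölder estimate. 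For the second assertion, $\phi$ is now continuous on the compact set $\bar\X$, hence attains its minimum $\mphi$ and maximum $\Mphi$; bounding $\nr{x-x'}^{1-d/p}$ by $\diam(\X)^{1-d/p}$ at an extremal pair $(x,x')$ gives the oscillation estimate.

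The point requiring most care, and the one I expect to be the main obstacle, is justifying that $\phi$ genuinely admits a continuous representative on $\bar\X$ to which Morrey's inequality applies. The Brenier potential is a priori only defined as the convex conjugate of $\psi_\mu$ restricted to $\X$, and when $\mu$ has non-compact support the gradient $\nabla\phi$ is unbounded, so $\phi$ itself could conceivably blow up near $\partial \X$. However, since $\phi$ is convex it is locally bounded on $\text{int}(\X)$, so Morrey's inequality, applied on compact subdomains exhausting the interior, produces the stated Hölder bound for a.e. pair $(x,x')$ of Lebesgue points of $\phi$ in $\text{int}(\X)$. Uniform continuity then supplies a unique continuous extension to $\bar\X$ satisfying the same bound everywhere, which one takes as the canonical representative of the Brenier potential, ensuring in particular that $\mphi$ and $\Mphi$ are finite.
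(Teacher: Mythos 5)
Your proof is correct and follows essentially the same route as the paper: push forward $\rho$ by $\nabla\phi$ to translate the moment bound into $\nr{\nabla\phi}_{\L^p(\X)} \leq (M_p/\mrho)^{1/p}$, then invoke Morrey's embedding on the bounded convex domain $\X$. The only (minor) difference is that you use the gradient-only form of the Morrey estimate directly, whereas the paper first normalizes $\int_\X \phi\,\dd x = 0$ and invokes Poincaré--Wirtinger to control $\nr{\phi}_{W^{1,p}(\X)}$ before applying the $W^{1,p}$-version of Morrey; your final paragraph about extending from the interior to $\bar\X$ is a sensible precaution but is not an issue the paper dwells on.
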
 

Corollary \ref{cor:stability-potentials} and Proposition \ref{prop:morrey-phi} together imply the following.

\begin{corollary}[Stability with enough moments] \label{cor:stab-pot-mp}
 Let $\rho$ be a probability density over a compact convex set $\X$, satisfying $0 < \mrho \leq \rho \leq \Mrho$. For any $\mu \in \Prob_2(\Rsp^d)$, denote $\phi_\mu$ the Brenier potential for the optimal transport between $\rho$ and $\mu$. Let $p > d$. Then the restriction of the mapping $\mu \mapsto \phi_\mu$ to the set of probability measures with bounded $p$-th moment is $1/2$-Hölder with respect to the
$\Wass_1$ distance. More precisely, if $\max(M_p(\mu^0),M_p(\mu^1)) \leq M_p < +\infty,$ then
$$ 
\nr{\phi_{\mu^1} - \phi_{\mu^0}}_{\L^2(\rho)} \leq C_{d, p, \X, \rho, M_p} \Wass_1(\mu^0, \mu^1)^{1/2}.
$$
\end{corollary}

\begin{remark}
\label{rk:sub-exponential}
A large class of probability distributions admit a finite moment of order $p>d$. For instance, sub-exponential measures, which encompass most of the commonly used heavy-tailed distributions fall into this class. We say that a measure $\mu \in \mathcal{P}\left(\Rsp^{d}\right)$ is sub-exponential with variance proxy $\sigma^{2}$ for $\sigma > 0$ if it has zero mean  and if for all $r>0$, $$ \mu(\{x\in\Rsp^d\mid\nr{x}\geq r\}) \leq 2 e^{-2r/\sigma}.$$ We refer to  Proposition 2.7.1 in \cite{vershynin_2018} for equivalent characterization. The moments of such a measure  are all bounded, and more precisely, 
$$ M_p(\mu) \leq 2 p! \left(\frac{\sigma}{2}\right)^p .$$
\end{remark}

We report the proof of Proposition \ref{prop:morrey-phi} from \cite{berman2013real} for completeness. 

\begin{proof}[Proof of Proposition \ref{prop:morrey-phi}]
The gradient $\nabla \phi$ corresponds to the optimal transport map between $\rho$ and $\mu$. Using that $\mu$ is the image of $\rho$ under $\nabla \phi$, the moment assumption  gives,
\begin{align*}
    \nr{\nabla \phi}_{\L^p(\X)}^p &= \int_\X \nr{\nabla \phi (x)}^p \dd x \leq \frac{1}{\mrho} \int_\X \nr{\nabla \phi (x)}^p \dd \rho(x) \leq \frac{M_p}{\mrho}.
\end{align*}
We can add a constant to $\phi$ so that $\int_\X \phi(x) \dd x = 0$ without changing its modulus of continuity. The Poincaré-Wirtinger inequality then ensures that 
$\nr{\phi}_{\L^p(\X)} \leq C_{p, \X} \nr{\nabla \phi}_{\L^p(\X)}.$
In particular, the potential $\phi$ belongs to the Sobolev space $W^{1,p}(\X)$.
Morrey's inequality (Theorem 11.34 and Theorem 12.15 in \cite{Leoni}) ensures that $\phi$ is $(1-\frac{d}{p})$-Hölder  and that there exists a constant depending only on $d, p$ and $\X$  such that

\begin{align*}
    \forall x\neq x' \in \X, \quad \frac{ \abs{\phi(x) - \phi(x')} }{ \nr{x-x'}^{1 - \frac{d}{p}} } &\leq C_{d, p, \X} \nr{\phi}_{W^{1,p}(\X)} 
    \leq C_{d, p, \X} \left(\frac{M_p}{\mrho} \right)^{1/p}. \qedhere
\end{align*}
\end{proof}

\begin{remark}[Morrey's inequality for convex functions] \label{rem:morrey}
Since the Brenier potentials $\phi$ are convex, one may wonder whether Morrey's inequality and the resulting Sobolev embedding can be improved when restrictected to the class of convex functions. However, one can show that for $\X = [0, 1]^d$ and $p < d$, for $\alpha \in \left(0, \frac{d}{p} - 1\right)$, the potential
$$
\phi :\left\{
    \begin{array}{ll}
        \X \to \Rsp \\
        (x_1, \dots, x_d) \mapsto (x_1 + \dots + x_d)^{-\alpha}
    \end{array}
\right.
$$
is convex, belongs to $W^{1, p}(\X)$, but obviously neither Hölder continuous nor even bounded. In other words, assuming that $M_p(\mu)<+\infty$ for $p<d$ does not guarantee that the Brenier potential from $\rho$ to $\mu$ is $\alpha$-Hölder, or even bounded.

\end{remark}

\section{Stability of optimal transport maps}
\label{sec:stab-ot-maps}

In this section, we derive quantitative stability estimates on optimal transport maps with respect to the target measures from the stability estimates on Brenier potentials given in the preceding section. This derivation relies on a Gagliardo–Nirenberg type inequality on the difference of convex  functions, which is reported here but will be proven in Section \ref{sec:ineg-convex-functions}.

\begin{proposition}
\label{prop:bound-diff-convex-fns}
Let $K$ be a compact domain of $\Rsp^d$ with rectifiable boundary and let $u, v: K \to \Rsp$ be two $L$-Lipschitz functions on $K$ that are convex on any
segment included in $K$. Then there exists a constant $C_{d}$ depending only on $d$ such that
\begin{equation*} \label{eq:gn}
    \nr{\nabla u - \nabla v}_{\L^2(K, \Rsp^d)}^2 \leq C_{d} \mathcal{H}^{d-1}(\partial K)^{2/3} L^{4/3}  \nr{u - v}^{2/3}_{\L^2(K)},
\end{equation*}
where $\mathcal{H}^{d-1}$ denotes the $(d-1)$-dimensional Hausdorff measure.
\end{proposition}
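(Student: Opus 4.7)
The plan is to reduce the multi-dimensional inequality to a one-dimensional version by slicing $K$ into line segments along each coordinate axis, applying a $1$D inequality on each fiber, and summing. The $1$D statement I would prove is: for convex Lipschitz $u, v$ on an interval $[a,b]$, with $w = u-v$ and $L = \nr{u'}_\infty + \nr{v'}_\infty$,
\[\int_a^b (w')^2 \, \d x \le C\, L^{4/3}\, \nr{w}_{\L^2(a,b)}^{2/3}.\]

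To prove this $1$D inequality I would combine two estimates. Since $u', v'$ are non-decreasing by convexity, $w'$ is of bounded variation with $\TV(w') \le 2L$ and $\nr{w'}_\infty \le L$; Stieltjes integration by parts (rigorously justified by mollifying $u, v$ by smooth convex approximants and passing to the limit) then gives
\[\int_a^b (w')^2\, \d x = [w w']_a^b - \int_a^b w\, \d w' \le C L\, \nr{w}_\infty.\]
To replace $\nr{w}_\infty$ by $\nr{w}_{\L^2}$ I use the Lipschitz property: if $M = \nr{w}_\infty$ and $x_0$ is a near-maximizer, then $|w| \ge M/2$ on an interval of length at least $\min(M/(4L), b-a)$ inside $[a,b]$. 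In the regime $M \le 4L(b-a)$ this gives $\nr{w}_{\L^2}^2 \gtrsim M^3/L$, hence $M \le C L^{1/3} \nr{w}_{\L^2}^{2/3}$, closing the argument via the previous display. In the regime $M > 4L(b-a)$, the same reasoning yields $\nr{w}_{\L^2} \gtrsim L(b-a)^{3/2}$, and the trivial estimate $\int (w')^2 \le L^2(b-a)$ is directly bounded by $L^{4/3}\nr{w}_{\L^2}^{2/3}$.

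For the multi-dimensional inequality, I would fix a coordinate direction $e_i$ and, for $y$ in the orthogonal hyperplane $e_i^\perp$, write $I_y := K \cap (y + \Rsp e_i)$ as a finite union of intervals $I_{y,1}, \ldots, I_{y,k(y)}$. The hypothesis on $u, v$ makes the restrictions of $u, v$ to each $I_{y,j}$ convex and Lipschitz with constants at most $\nr{\nabla u}_\infty, \nr{\nabla v}_\infty$, so the $1$D inequality applies on each $I_{y,j}$. Summing over $j$ via the discrete Hölder bound $\sum_j a_j \le k(y)^{2/3}(\sum_j a_j^3)^{1/3}$ with $a_j = \nr{w|_{I_{y,j}}}_{\L^2(I_{y,j})}^{2/3}$ yields $\int_{I_y} (\partial_i w)^2 \, \d x_i \le C L^{4/3} k(y)^{2/3} \nr{w|_{I_y}}_{\L^2(I_y)}^{2/3}$; a second Hölder inequality in $y$ (exponents $3/2$ and $3$) together with Fubini gives
\[\int_K (\partial_i w)^2\, \d x \le C L^{4/3}\Bigl(\int k(y)\, \d y\Bigr)^{2/3}\nr{w}_{\L^2(K)}^{2/3}.\]
The coarea (Cauchy–Crofton) formula for the rectifiable $\partial K$ bounds $\int k(y)\, \d y = \tfrac{1}{2}\int_{\partial K} |\nu \cdot e_i|\, \d \mathcal{H}^{d-1} \le \tfrac{1}{2}\mathcal{H}^{d-1}(\partial K)$; summing over $i = 1, \ldots, d$ absorbs the dimensional factor into $C_d$.

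The main technical obstacle I anticipate is the $1$D case: specifically, the two-regime analysis of $\nr{w}_\infty$ versus $L(b-a)$ and the rigorous justification of the Stieltjes integration by parts when $u', v'$ are only monotone rather than smooth. Both should be tractable via standard mollification by smooth convex approximants and a routine case analysis. The multi-dimensional reduction is a direct application of Fubini, Hölder, and the coarea formula.
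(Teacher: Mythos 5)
Your proposal is correct and proves the statement. The one-dimensional core is essentially identical to the paper's Lemma~\ref{lemma:bound-diff-convex-fns}: integrate by parts against the second-derivative measures of $u,v$ to obtain $\int (w')^2 \lesssim L\, \nr{w}_\infty$, and then upgrade $\nr{w}_\infty$ to $\nr{w}_{\L^2}^{2/3}L^{1/3}$ using the Lipschitz property, with a two-regime split according to whether $\nr{w}_\infty \lesssim L\cdot\mathrm{length}(I)$ or not. Your proposal and the paper match step for step here.

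Where you diverge is in the reduction from $d$ dimensions to $1$ dimension. The paper expresses $\nr{\nabla u - \nabla v}_{\L^2(K)}^2$ as an integral over the full space $\mathcal{L}^d$ of oriented affine lines using the integral-geometric identity of Lemma~\ref{lemma:int-geom}, applies the $1$D lemma on the (a.e.\ finitely many) segments of $\ell\cap K$, controls the component count via Jensen and H\"older, and finally bounds $\int n_\ell\,\d\mathcal{L}^d$ by $\mathcal{H}^{d-1}(\partial K)$ via the Crofton formula \eqref{eq:crofton-d-1}. You replace this by the more elementary identity $\abs{\nabla w}^2 = \sum_{i=1}^d (\partial_i w)^2$ and a Fubini slicing along each coordinate axis, replacing Crofton by the one-direction coarea bound $\int_{e_i^\perp} \#\bigl(\partial K \cap (y+\Rsp e_i)\bigr)\,\d y = \int_{\partial K} \abs{\nu\cdot e_i}\,\d\mathcal{H}^{d-1} \leq \mathcal{H}^{d-1}(\partial K)$, and summing over $i$. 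This avoids the need for a separate integral-geometric lemma at the price of an extra factor of $d$ absorbed into $C_d$ (harmless for the statement). One small imprecision: you state $\int k(y)\,\d y = \tfrac{1}{2}\int_{\partial K}\abs{\nu\cdot e_i}\,\d\mathcal{H}^{d-1}$; in general a line can meet $\partial K$ at more than $2k(y)$ points, so this should be an inequality $\int k(y)\,\d y \leq \tfrac{1}{2}\int_{\partial K}\abs{\nu\cdot e_i}\,\d\mathcal{H}^{d-1}$. Since you only need the upper bound, the argument goes through unchanged. Both routes are sound and of comparable difficulty; the paper's is coordinate-free, yours is slightly more elementary.
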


With this proposition at hand, the stability result for Brenier potentials can readily be transferred to stability of the corresponding optimal transport maps --  that is, to their gradient -- at least when the target measures are  compactly supported. Indeed, Proposition \ref{prop:bound-diff-convex-fns} together with Corollary \ref{cor:stability-potentials}  directly imply:

\begin{theorem}[Stability of the Brenier map, compact case]
\label{th:stability-ot-maps-compact-case}
Let $\X,\Y$ be compact subsets of $\Rsp^d$ with $\X$ convex, let $\rho$ be a probability density over $\X$ bounded from above and below by positive constants 
and let $\mu^0,\mu^1 \in \Prob(\Y)$. Denoting $T_{\mu^k}$ the Brenier map from $\rho$ to $\mu^k$, we have
$$ \Wass_2(\mu^0,\mu^1) \leq \nr{T_{\mu^0} - T_{\mu^1}}_{\L^2(\rho, \Rsp^d)} \leq C_{d,\rho,\X,\Y}  \Wass_1(\mu^0, \mu^1)^{\frac{1}{6}}.$$
In particular, the embedding $\mu \in \Prob_2(\Y) \to T_\mu\in \L^2(\rho,\Rsp^d)$ is bi-Hölder continuous.
\end{theorem}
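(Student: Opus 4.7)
The plan is to chain two results already established: the stability of Brenier potentials from Corollary~\ref{cor:stability-potentials}, which controls $\nr{\phi_{\mu^0} - \phi_{\mu^1}}_{\L^2(\rho)}$ by $\Wass_1(\mu^0,\mu^1)^{1/2}$, and the Gagliardo--Nirenberg type inequality of Proposition~\ref{prop:bound-diff-convex-fns}, which converts an $\L^2$-bound on the difference of two convex Lipschitz functions into an $\L^2$-bound on the difference of their gradients.

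First, I check the hypotheses. Setting $R := \max_{y\in\Y}\nr{y}$, since $\mu^0,\mu^1 \in \Prob(\Y)$ the Brenier maps $\nabla\phi_{\mu^k}$ take values in $\Y$ for $\rho$-almost every $x \in \X$, hence $\phi_{\mu^k}$ is $R$-Lipschitz on $\X$ and has oscillation at most $R\cdot\diam(\X)$. After adding a common positive constant to both potentials --- which changes neither the Brenier maps, the Wasserstein distance, nor the variances appearing in the stability statement --- one can arrange that \eqref{eq:hyp-phi} holds with $\Mphi - \mphi$ bounded by a multiple of $R \cdot \diam(\X)$, so the constant in \eqref{eq:stab-potentials-wass} depends only on $d, \rho, \X, \Y$.

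Next I combine the two inequalities. The normalization $\int_\X \phi_{\mu^k}\,\dd\rho = 0$ forces $\Var_\rho(\phi_{\mu^1}-\phi_{\mu^0}) = \nr{\phi_{\mu^1}-\phi_{\mu^0}}_{\L^2(\rho)}^2$, so Corollary~\ref{cor:stability-potentials} combined with the lower bound $\rho \geq \mrho$ on $\X$ gives
\begin{equation*}
    \nr{\phi_{\mu^0} - \phi_{\mu^1}}_{\L^2(\X)}^2 \leq \tfrac{1}{\mrho}\nr{\phi_{\mu^0} - \phi_{\mu^1}}_{\L^2(\rho)}^2 \leq C_{d,\X,\Y,\rho}\,\Wass_1(\mu^0,\mu^1).
\end{equation*}
I then apply Proposition~\ref{prop:bound-diff-convex-fns} with $K=\X$ (a compact convex body, so $\mathcal{H}^{d-1}(\partial\X) < \infty$) and $u=\phi_{\mu^0}$, $v=\phi_{\mu^1}$, both convex and $R$-Lipschitz by the previous step, yielding
\begin{equation*}
    \nr{T_{\mu^0} - T_{\mu^1}}_{\L^2(\X)}^2 \leq C'_{d,\X,\Y}\,\nr{\phi_{\mu^0} - \phi_{\mu^1}}_{\L^2(\X)}^{2/3} \leq C''_{d,\X,\Y,\rho}\,\Wass_1(\mu^0,\mu^1)^{1/3}.
\end{equation*}
Bounding $\nr{\cdot}_{\L^2(\rho)} \leq \sqrt{\Mrho}\,\nr{\cdot}_{\L^2(\X)}$ and taking square roots produces the announced exponent $\tfrac16 = \tfrac12 \cdot \tfrac13$. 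The lower bound $\Wass_2(\mu^0,\mu^1)\leq\nr{T_{\mu^0}-T_{\mu^1}}_{\L^2(\rho)}$ is the one recorded in the introduction, since $(T_{\mu^0},T_{\mu^1})_\#\rho$ is a coupling of $\mu^0$ and $\mu^1$.

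There is no genuine difficulty here: the two nontrivial ingredients --- the strong convexity of the Kantorovich functional proved in Section~\ref{sec:strong-convexity-K} and the Gagliardo--Nirenberg type inequality for differences of convex functions --- have been established separately, and this theorem is essentially a bookkeeping composition in which the Hölder exponent $1/2$ coming from the stability of potentials and the exponent $1/3$ coming from the Gagliardo--Nirenberg inequality multiply to give the final exponent $1/6$.
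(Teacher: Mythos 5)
Your proof is correct and takes exactly the route the paper intends: the paper itself gives no detailed argument, stating only that the result ``follows directly'' from Corollary~\ref{cor:stability-potentials} and Proposition~\ref{prop:bound-diff-convex-fns}, and your write-up carries out precisely that composition. The only content you add beyond the paper is the explicit verification of the hypotheses (the $R$-Lipschitz bound on $\phi_{\mu^k}$ coming from $\spt\mu^k\subset\Y$, and the harmless shift by a constant to ensure $\mphi>0$ in \eqref{eq:hyp-phi}), both of which the paper leaves implicit and both of which you handle correctly.
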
 

\begin{remark}[bi-Hölder embedding via potentials]
The previous theorem and Proposition \ref{prop:bound-diff-convex-fns} together with Corollary \ref{cor:stability-potentials} also ensure the following
bi-Hölder behavior for the Brenier potentials (with zero mean against $\rho$ on $\X$):
\begin{equation*}
  \forall \mu^0,\mu^1\in\Prob(\Y), \quad \Wass_2(\mu^0, \mu^1)^3 \lesssim \nr{\phi^1 - \phi^0}_{\L^2(\rho)} \lesssim \Wass_1(\mu^0, \mu^1)^{\frac{1}{2}},
\end{equation*}
where the $\lesssim$ notation hides multiplicative constants depending on $d,\rho,\X,\Y$.
\end{remark}

We now phrase a similar stability result for probability measures whose Brenier potential is Hölder continuous and that admit a bounded fourth order moment. 
This includes a large class of probability measures, as noticed in Proposition \ref{prop:morrey-phi} and Remark \ref{rk:sub-exponential}.

\begin{theorem}[Stability of the Brenier map]
\label{th:stability-ot-maps}
Let $\rho$ be a probability density over a  compact convex set $\X \subset \Rsp^d$, satisfying $0 < \mrho \leq \rho \leq \Mrho$.
Let $\mu^0, \mu^1 \in \Prob_2(\Rsp^d)$ and denote $\phi^0, \phi^1$ the Brenier potentials for the quadratic optimal transport between $\rho$ and $\mu^0, \mu^1$ respectively. Assume that there exists $M_\alpha > 0$ and $\alpha \in (0, 1)$ such that for all $x, x' \in \X$ and $k \in \{0,1\}$,
$$ \abs{\phi^k(x) - \phi^k(x')} \leq M_\alpha \nr{x - x'}^\alpha.$$
Assume that there exists $0 < M < +\infty$ such that for $ k \in \{0, 1\}$, $ M_4(\mu^k) \leq M.$
Then 
\begin{equation}
    \label{eq:stab-ot-maps}
    \Wass_2(\mu^0, \mu^1) \leq \nr{\nabla \phi^1 - \nabla \phi^0}_{\L^2(\rho, \Rsp^d)} \leq C_{d,\rho,\X,\alpha,M_\alpha,M} \Wass_1(\mu^0, \mu^1)^{\frac{1}{2(11-8\alpha)}}.
\end{equation}
\end{theorem}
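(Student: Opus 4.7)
The strategy mirrors that of \cref{th:stability-ot-maps-compact-case} (the compact case), but the fact that $\mu^0, \mu^1$ need not be compactly supported means the Brenier maps $\nabla \phi^k$ lie only in $\L^4(\rho)$ rather than in $\L^\infty(\rho)$, so \cref{prop:bound-diff-convex-fns} cannot be applied directly on $\X$. The plan is therefore to \emph{localize}: I will apply Gagliardo--Nirenberg on an interior region of $\X$ where $\nabla \phi^k$ is controlled via convexity and the Hölder hypothesis, then handle the complementary boundary layer using the fourth moment assumption.

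For a parameter $r \in (0, 1)$ to be chosen, I would introduce the inner parallel body
\[ K_r = \{ x \in \X : d(x, \partial \X) \geq r \}, \]
which is compact and convex. Standard convex body estimates (Steiner formula) yield $\mathcal{H}^{d-1}(\partial K_r) \leq \mathcal{H}^{d-1}(\partial \X)$ and $\vol(\X \setminus K_r) \leq r \, \mathcal{H}^{d-1}(\partial \X)$. For $x \in K_r$ with $\nabla \phi^k(x) \neq 0$, the point $y = x + r\, \nabla \phi^k(x)/\nr{\nabla \phi^k(x)}$ lies in $\X$, so the subgradient inequality combined with the Hölder hypothesis gives $r \, \nr{\nabla \phi^k(x)} \leq \phi^k(y) - \phi^k(x) \leq M_\alpha r^\alpha$, hence $\nr{\nabla \phi^k}_{\L^\infty(K_r)} \leq M_\alpha r^{\alpha - 1} =: L_r$ and $\phi^k$ is $L_r$-Lipschitz on $K_r$. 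The Hölder hypothesis also yields $\Mphi - \mphi \leq M_\alpha \diam(\X)^\alpha$, so \cref{cor:stability-potentials}, together with $\int (\phi^1 - \phi^0)\dd \rho = 0$, gives $\nr{\phi^1 - \phi^0}_{\L^2(\rho)}^2 = \Var_\rho(\phi^1 - \phi^0) \leq C \, \Wass_1(\mu^0, \mu^1)$.

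I would then split
\[ \nr{\nabla \phi^1 - \nabla \phi^0}_{\L^2(\rho)}^2 = \int_{K_r} \nr{\nabla \phi^1 - \nabla \phi^0}^2 \dd \rho + \int_{\X \setminus K_r} \nr{\nabla \phi^1 - \nabla \phi^0}^2 \dd \rho. \]
On $K_r$, \cref{prop:bound-diff-convex-fns} applied to the $L_r$-Lipschitz convex functions $\phi^0, \phi^1$, after comparing $\rho$ and Lebesgue measure via $\mrho, \Mrho$, yields
\[ \int_{K_r} \nr{\nabla \phi^1 - \nabla \phi^0}^2 \dd \rho \leq C \, L_r^{4/3} \, \nr{\phi^1 - \phi^0}_{\L^2(\rho)}^{2/3} \leq C' \, L_r^{4/3} \, \Wass_1(\mu^0, \mu^1)^{1/3}. \]
On the boundary layer, Cauchy--Schwarz combined with $\rho(\X \setminus K_r) \leq \Mrho r \mathcal{H}^{d-1}(\partial \X)$ and $\int \nr{\nabla \phi^k}^4 \dd \rho = M_4(\mu^k) \leq M$ produces
\[ \int_{\X \setminus K_r} \nr{\nabla \phi^1 - \nabla \phi^0}^2 \dd \rho \leq C \, r^{1/2} M^{1/2}. \]

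Substituting $L_r = M_\alpha r^{\alpha - 1}$ and summing gives
\[ \nr{\nabla \phi^1 - \nabla \phi^0}_{\L^2(\rho)}^2 \leq C_1 \, r^{4(\alpha - 1)/3} \, \Wass_1(\mu^0, \mu^1)^{1/3} + C_2 \, r^{1/2}, \]
and balancing the two terms by choosing $r = \Wass_1(\mu^0, \mu^1)^{2/(11 - 8\alpha)}$ yields the announced exponent $1/(2(11 - 8\alpha))$ after taking a square root. The lower bound $\Wass_2 \leq \nr{\nabla \phi^1 - \nabla \phi^0}_{\L^2(\rho)}$ is standard since $(\nabla \phi^0, \nabla \phi^1)_\# \rho$ is a coupling of $\mu^0$ and $\mu^1$. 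The main technical nuisance is controlling the perimeter of and the volume outside of $K_r$ uniformly in $r$: these are standard facts for compact convex bodies but are the only step that cannot be read off directly from the prior sections.
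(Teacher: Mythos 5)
Your proof is correct and follows essentially the same route as the paper's: decompose $\X$ into an erosion $\X_{-r}$ (where the Hölder hypothesis, via the subgradient inequality, yields a Lipschitz bound on $\nabla\phi^k$ so that \cref{prop:bound-diff-convex-fns} and \cref{cor:stability-potentials} apply) and a boundary layer $\X\setminus\X_{-r}$ (handled by Cauchy--Schwarz, the fourth-moment bound and the linear volume estimate of \cref{prop:volume-boundary-slice-cvx}), then optimize over $r$. The only cosmetic difference is that the paper parametrizes by the Lipschitz bound $R$ and sets $\eta_R = (M_\alpha/R)^{1/(1-\alpha)}$ via \cref{prop:bound-gradient-phi-t}, whereas you parametrize directly by the erosion depth $r$; the substitution $r = \eta_R$ converts one into the other and yields the identical exponent.
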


\begin{remark}
The assumption $M_4(\mu^k) < + \infty$ comes from a use of the Cauchy-Schwarz inequality in the proof of Theorem \ref{th:stability-ot-maps}. However, one could use Hölder's inequality instead, under different moment assumption and show that for any $q \geq 1$, assuming that $M_{2q}(\mu^k) \leq M_{2q} <+ \infty$ for $k \in \{0, 1\}$, one has
$$ \nr{\nabla \phi^1 - \nabla \phi^0}_{\L^2(\rho)} \leq C_{d, \rho, \X, M_\alpha, \alpha, M_{2q}} \Wass_1(\mu^0, \mu^1)^{\frac{q-1}{2(q(7-4\alpha) - 3)}}. $$
Since the exponent is an increasing function of $q$, a stronger stability can be obtained at the cost of stronger moment assumptions.
\end{remark}

Theorem \ref{th:stability-ot-maps} and Proposition \ref{prop:morrey-phi} directly imply the following.

\begin{corollary}[Stability with enough moments]
\label{cor:embedding-BWp-Mp}
Let $\rho$ be a probability density over a  compact convex set $\X \subset \Rsp^d$, satisfying $0 < \mrho \leq \rho \leq \Mrho$.
For $\mu \in \Prob_2(\Rsp^d)$, denote $\nabla \phi_\mu$ the optimal transport map for the quadratic optimal transport between $\rho$ and $\mu$. Let $p \in \Rsp$ and assume $p \geq 4$ and $p>d$. Then, the map $\mu\mapsto T_\mu$ is Hölder when restricted to the set of probability measures with bounded $p$-th moment. More precisely, if $\max(M_p(\mu^0),M_p(\mu^1)) \leq M_p < +\infty,$ then
$$  \Wass_2(\mu^0, \mu^1) \leq \nr{\nabla \phi_{\mu^1} - \nabla \phi_{\mu^0}}_{\L^2(\rho, \Rsp^d)} \leq C_{d, p, \X, \rho, M_p} \Wass_1(\mu^0, \mu^1)^{\frac{p}{6p + 16d}}.$$
\end{corollary}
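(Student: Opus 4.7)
The plan is to combine the Hölder regularity of Brenier potentials provided by \cref{prop:morrey-phi} with the stability estimate of \cref{th:stability-ot-maps}, and then simply compute the resulting exponent. Since the left inequality $\Wass_2(\mu^0,\mu^1) \leq \nr{T_{\mu^1} - T_{\mu^0}}_{\L^2(\rho)}$ is the standard comparison recalled in the introduction, only the right inequality requires work.

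First, I would verify that the two hypotheses of \cref{th:stability-ot-maps} are met. On one hand, since $p > d$, \cref{prop:morrey-phi} applies and yields, for each $k \in \{0,1\}$, that $\phi_{\mu^k}$ is $\alpha$-Hölder on $\X$ with exponent
\begin{equation*}
  \alpha = 1 - \tfrac{d}{p} \in (0,1)
\end{equation*}
and Hölder constant $M_\alpha \leq C_{d,p,\X} (M_p/\mrho)^{1/p}$, which depends only on $d,p,\X,\rho,M_p$. On the other hand, since $p \geq 4$, Hölder's inequality applied to the probability measure $\mu^k$ gives
\begin{equation*}
    M_4(\mu^k) = \int \nr{y}^4 \,\dd \mu^k(y) \leq \Bigl(\int \nr{y}^p\,\dd\mu^k(y)\Bigr)^{4/p} \leq M_p^{4/p},
\end{equation*}
so that the fourth moment bound required by \cref{th:stability-ot-maps} is satisfied with a constant depending only on $p$ and $M_p$.

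With these two inputs, \cref{th:stability-ot-maps} applies and produces
\begin{equation*}
    \nr{\nabla \phi_{\mu^1} - \nabla \phi_{\mu^0}}_{\L^2(\rho)} \leq C_{d,\rho,\X,\alpha,M_\alpha, M_p^{4/p}}\, \Wass_1(\mu^0,\mu^1)^{\frac{1}{2(11-8\alpha)}}.
\end{equation*}
All constants entering the prefactor ultimately depend only on $d,p,\X,\rho,M_p$. The only remaining task is to simplify the exponent using $\alpha = 1 - d/p$:
\begin{equation*}
  \frac{1}{2(11 - 8\alpha)} \;=\; \frac{1}{2\bigl(11 - 8 + \tfrac{8d}{p}\bigr)} \;=\; \frac{1}{2\bigl(3 + \tfrac{8d}{p}\bigr)} \;=\; \frac{p}{6p + 16d},
\end{equation*}
which yields the announced inequality. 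I expect no real obstacle here: the statement is essentially a direct splicing together of the two quoted results, and the only subtlety is to keep track of the fact that the constants produced by \cref{prop:morrey-phi,th:stability-ot-maps} depend on $\mu^0,\mu^1$ only through the uniform moment bound $M_p$ and the ambient data $d,p,\X,\rho$, which is exactly what was packaged in the corollary's constant $C_{d,p,\X,\rho,M_p}$.
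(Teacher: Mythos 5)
Your proof is correct and takes exactly the route the paper intends: the paper itself states that \cref{th:stability-ot-maps} and \cref{prop:morrey-phi} "directly imply" the corollary, and your derivation, including the use of Hölder's inequality to bound $M_4$ from $M_p$ and the algebraic simplification $\frac{1}{2(11-8\alpha)} = \frac{p}{6p+16d}$ with $\alpha = 1 - d/p$, is the obvious unpacking of that implication.
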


To prove Theorem~\ref{th:stability-ot-maps}, we first show that whenever a Brenier potential defined on the compact and convex set $\X$ is Hölder continuous, it is possible to control its Lipschitz constant on erosions of $\X$. We recall that for $\eta>0$, the $\eta$-erosion of $\X$, denoted $\X_{-\eta}$, corresponds to the set of points of $\X$ that are at least at a distance $\eta$ from $\partial \X$.  
The proof of this proposition is inspired by Proposition~3.3 in \cite{klartag2014logarithmically}.

\begin{proposition}[Lipschitz behavior on erosion]
\label{prop:bound-gradient-phi-t}
Let $\rho$ be a probability density over a  compact convex set $\X \subset \Rsp^d$, satisfying $0 < \mrho \leq \rho \leq \Mrho$.
Let $\mu \in \Prob_2(\Rsp^d)$ and denote $\phi$ the Brenier potential for the quadratic optimal transport between $\rho$ and $\mu$. Assume that there exists $M_\alpha > 0$ and $\alpha \in (0, 1)$ such that for all $x, x' \in \X$,
$$ \abs{\phi(x) - \phi(x')} \leq M_\alpha \nr{x - x'}^\alpha.$$
Then, $\phi$ is $R$-Lipschitz on the erosion $\X_{-\eta_R}$ with $\eta_R = \left( \frac{M_\alpha}{R} \right)^{\frac{1}{1 - \alpha}}$.
\end{proposition}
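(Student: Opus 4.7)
The plan is to use a classical convexity trick: for a convex function $\phi$, a one-sided growth estimate from a point $x$ into a ball around $x$ immediately controls $\abs{\nabla \phi(x)}$ by (growth) divided by (radius). The Hölder assumption supplies such a growth estimate, and the defining property of the erosion $\X_{-\eta_R}$ ensures that a ball of radius $\eta_R$ around any of its points is contained in $\X$, so that the Hölder estimate is actually applicable.

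Concretely, I would fix $x\in\X_{-\eta_R}$, a unit vector $v \in \Rsp^d$, and a subgradient $g \in \partial \phi(x)$. Since $x+\eta_R v \in \X$ by definition of the erosion, the Hölder assumption yields $\phi(x+\eta_R v)-\phi(x)\leq M_\alpha \eta_R^{\alpha}$, while convexity of $\phi$ along the segment $[x,x+\eta_R v]$ gives $\phi(x+\eta_R v)-\phi(x)\geq \eta_R\sca{g}{v}$. Combining these two bounds, dividing by $\eta_R$, and taking the supremum over unit vectors $v$ yields
\[
\abs{g} \leq M_\alpha \eta_R^{\alpha-1}.
\]
With the choice $\eta_R = (M_\alpha/R)^{1/(1-\alpha)}$ the right-hand side is exactly $R$. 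Since a convex function is almost everywhere differentiable and in particular $\partial\phi(x)=\{\nabla \phi(x)\}$ for a.e.\ $x$, this gives $\abs{\nabla \phi(x)} \leq R$ for almost every $x \in \X_{-\eta_R}$.

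To upgrade this a.e.\ gradient bound into a genuine Lipschitz estimate on $\X_{-\eta_R}$, I would invoke the standard fact that the Minkowski erosion of a convex set is itself convex, so that the segment joining any two points $x_0, x_1 \in \X_{-\eta_R}$ remains in $\X_{-\eta_R}$. Integrating $\abs{\nabla \phi}$ along this segment then delivers $\abs{\phi(x_1)-\phi(x_0)}\leq R \nr{x_1-x_0}$, which is the claimed Lipschitz bound. There is no real obstacle in this argument; the only point that deserves a word of justification is the convexity of the erosion, which is immediate from the representation $\X_{-\eta_R}=\{x\in\Rsp^d : B(x,\eta_R)\subseteq \X\}$ together with the convexity of $\X$ (if $\eta_R$ exceeds the inradius of $\X$ the set is empty and the statement is vacuous).
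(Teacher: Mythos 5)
Your proof is correct and follows essentially the same route as the paper: both bound the subgradient at a point of $\X_{-\eta_R}$ by combining the convexity (subgradient/Fenchel--Young) inequality with the Hölder modulus along a well-chosen direction, then conclude Lipschitz continuity. The only cosmetic difference is that the paper aims the comparison point at $\partial\X$ along the ray $x+\Rsp^+g$ (using $\nr{x-x'}\geq\eta_R$), whereas you take $x'=x+\eta_R v$ and optimize over the unit direction $v$; also, your detour through a.e.\ differentiability and convexity of the erosion is unnecessary, since the subgradient inequality alone gives $\abs{\phi(x_1)-\phi(x_0)}\leq R\nr{x_1-x_0}$ directly for any two points where the subgradients are bounded by $R$.
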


\begin{proof}
Let $x \in \X$ be such that $d(x, \partial \X) \geq \eta_R$, and let $g \in \partial \phi(x)$. We will show that $\nr{g} \leq R$, thus implying the statement. Denoting $\psi = (\phi)^*$, the Fenchel-Young equality and inequality ensures that
$$
\begin{cases}
    \psi(g) = \sca{g}{x} -  \phi(x), \\
    \psi(g) \geq \sca{g}{x'} - \phi(x') &\hbox{ for all } x'\in \X.
\end{cases}
$$
Putting these equations together, we get that for any $x' \in \X$,
\begin{align}
    \label{eq:psi-bound-holder}
    \sca{g}{x' - x} \leq \phi(x') - \phi(x) \leq M_\alpha \nr{x'-x}^\alpha, 
\end{align}
where we used  the Hölder continuity assumption on $\phi$. We now choose $x'$ to be the unique point in the intersection between the ray
$x + \Rsp^+ g$ and $\partial \X$, so that $\sca{g}{x'-x} =  \nr{x-x'}\nr{g}$ and in \eqref{eq:psi-bound-holder},
\begin{equation*}
    \nr{g} \leq \frac{M_\alpha}{\nr{x - x'}^{1-\alpha}}. 
\end{equation*}
Now using $\nr{x'-x} \geq d(x,\partial \X) \geq \eta_R$ in this last inequality yields $\nr{g} \leq R$.

\end{proof}

Proposition \ref{prop:bound-gradient-phi-t} allows to control the Lipschitz constant of the restriction $\phi^k$ to $\X_{-\eta}$ assuming that $\phi^k$ is $\alpha$-Hölder continuous. Combining it with the inequality of Proposition \ref{prop:bound-diff-convex-fns}, we get a stability estimate for the restriction of the transport map to $\X_{-\eta}$. To conclude the proof of the theorem, we will rely on an upper bound on the volume of the symetric difference betwen $\X$ and its erosion $\X_{-\eta}$ given in the next proposition.

\begin{proposition}[Volume of boundary slices]
\label{prop:volume-boundary-slice-cvx}
Let $\X \subset \Rsp^d$ be a compact convex set containing the origin, and denote $r_\X>0$ and $R_\X>0$ the largest and smallest radii
such that $B(0, r_\X) \subseteq \X \subseteq B(0, R_\X)$.
Then, for all $\eta  \geq 0$, %\in [0, r_\X)
$$ \vol^d(\X \backslash \X_{-\eta}) \leq 2 S_{d-1} (R_\X + r_\X)^{d-1} \frac{R_\X}{r_\X}  \eta,$$
where $S_{d-1}$ denotes the surface area of the $(d-1)$-dimensional unit sphere. % $\mathcal{S}^{d-1}$: $S_{d-1} = \frac{2 \pi^{d/2}}{\Gamma(d/2)}$.
\end{proposition}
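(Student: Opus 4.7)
The plan is to use the polar parametrization of $\X$ about the origin. Since $\X$ is convex and contains $0$, it is star-shaped with respect to the origin and admits a radial function $\rho_\X : \mathbb{S}^{d-1}\to \Rsp_+$ defined by $\rho_\X(u)=\max\{t \geq 0 \mid tu \in \X\}$, with $r_\X \leq \rho_\X(u) \leq R_\X$ for every $u$. The erosion $\X_{-\varepsilon}$ of a convex set is itself convex; moreover $\X_{-\varepsilon}$ contains $B(0, r_\X - \varepsilon)$ and thus the origin (since $\varepsilon < r_\X$), so it too admits a radial function $\rho_{\X_{-\varepsilon}}$ with $\rho_{\X_{-\varepsilon}} \leq \rho_\X$. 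Polar integration then yields
\begin{equation*}
  \vol^d(\X \setminus \X_{-\varepsilon}) \;=\; \frac{1}{d}\int_{\mathbb{S}^{d-1}} \bigl(\rho_\X(u)^d - \rho_{\X_{-\varepsilon}}(u)^d\bigr) \dd u,
\end{equation*}
so it suffices to control the integrand pointwise in $u$.

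The key step is the radial comparison $\rho_{\X_{-\varepsilon}}(u) \geq (1 - \varepsilon/r_\X)\,\rho_\X(u)$, or equivalently the inclusion $(1 - \varepsilon/r_\X)\,\X \subseteq \X_{-\varepsilon}$. I would obtain this via a one-line convex-combination argument using the inner-ball assumption: for any $x \in \X$, convexity of $\X$ together with $B(0,r_\X) \subseteq \X$ gives
\begin{equation*}
  B\!\left((1 - \varepsilon/r_\X)\,x,\,\varepsilon\right) \;=\; (1 - \varepsilon/r_\X)\,x + (\varepsilon/r_\X)\,B(0,r_\X) \;\subseteq\; \X,
\end{equation*}
so $(1-\varepsilon/r_\X)\,x$ lies at distance at least $\varepsilon$ from $\partial \X$, i.e. in $\X_{-\varepsilon}$. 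In particular $\rho_\X(u) - \rho_{\X_{-\varepsilon}}(u) \leq (\varepsilon/r_\X)\,\rho_\X(u) \leq \varepsilon R_\X/r_\X$.

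To conclude, I would apply the elementary inequality $a^d - b^d \leq d(a-b)\,a^{d-1}$ (valid for $0 \leq b \leq a$) with $a = \rho_\X(u) \leq R_\X$ and $b = \rho_{\X_{-\varepsilon}}(u)$, which gives
\begin{equation*}
  \rho_\X(u)^d - \rho_{\X_{-\varepsilon}}(u)^d \;\leq\; d\,\frac{\varepsilon R_\X}{r_\X}\,R_\X^{d-1} \;\leq\; d\,\frac{\varepsilon R_\X}{r_\X}\,(R_\X + r_\X)^{d-1}.
\end{equation*}
Integrating over $\mathbb{S}^{d-1}$ (whose surface measure is $S_{d-1}$) and dividing by $d$ yields the claimed estimate. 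There is no real obstacle here: the proof hinges entirely on the convex scaling lemma $(1 - \varepsilon/r_\X)\X \subseteq \X_{-\varepsilon}$, and everything else is a routine radial integration plus the elementary bound on $a^d - b^d$.
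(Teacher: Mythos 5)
Your proof is correct, and it takes a cleaner route than the paper's. The paper first invokes Matheron's lemma to pass from the erosion to the dilation, $\vol^d(\X \setminus \X_{-\varepsilon}) \leq \vol^d(\X_{+\varepsilon} \setminus \X)$, and then bounds the radial function of the \emph{dilated} set (writing $\rho_{\X_{+\varepsilon}}(u) - \rho_\X(u) \leq \varepsilon R_\X / r_\X$). You skip the erosion-to-dilation comparison entirely: the convex-scaling inclusion $(1 - \varepsilon/r_\X)\X \subseteq \X_{-\varepsilon}$, proved in one line from $B(0,r_\X) \subseteq \X$ and convexity, gives a direct lower bound $\rho_{\X_{-\varepsilon}}(u) \geq (1 - \varepsilon/r_\X)\rho_\X(u)$ on the radial function of the \emph{eroded} set, and the rest is the same polar integration plus the elementary bound $a^d - b^d \leq d(a-b)a^{d-1}$. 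This is self-contained (no external lemma), and in fact your argument naturally yields the slightly sharper constant $S_{d-1} R_\X^{d-1}(R_\X/r_\X)\varepsilon$; you only loosen $R_\X^{d-1}$ to $(R_\X+r_\X)^{d-1}$ to match the stated form, whereas the paper genuinely needs $(R_\X+r_\X)^{d-1}$ because it must bound $\rho_{\X_{+\varepsilon}} \leq R_\X + \varepsilon$. One small point worth making explicit in a final write-up: the implication ``$B(y,\varepsilon) \subseteq \X \Rightarrow d(y,\partial\X) \geq \varepsilon$'' uses compactness and convexity of $\X$ (e.g.\ via a supporting hyperplane at any boundary point), since $\partial\X \subset \X$ for a closed set and one must rule out a nearby boundary point lying inside the ball.
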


We  quote a lemma extracted from \cite{Matheron2005THESF} that allows to control the volume of the  difference between a convex $\X$ and its $\eta$-erosion $X_{-\eta}$ using the volume of $\eta$-dilation of $\X$, denoted $\X_{+\eta} = \{ x \in \Rsp^d \mid  d(x, \X) \leq \eta\}. $

\begin{lemma}[Lemma 1 in \cite{Matheron2005THESF}]
\label{lemma:dilated-erosion}
For all $\eta \leq r_\X$,
$\vol^d(\X \backslash \X_{-\eta}) \leq \vol^d(\X_{+\eta} \backslash \X).$
\end{lemma}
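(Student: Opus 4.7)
My strategy is to compare both volumes via the perimeters of the inner and outer parallel bodies (which remain convex), reducing the claim to the classical monotonicity of perimeter under inclusion within the class of convex bodies.

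The first step is to apply the coarea formula to the distance function $d_\partial(x) := d(x,\partial\X)$, which is $1$-Lipschitz on $\Rsp^d$ with $\vert\nabla d_\partial\vert = 1$ almost everywhere off $\partial\X$. Since $\X$ is convex, for every $y\notin\X$ the nearest point of $\X$ lies on $\partial\X$, so $d_\partial(y)=d(y,\X)$ outside $\X$. For $s\in(0,r_\X)$ one has $d_\partial^{-1}(s)\cap\X = \partial\X_{-s}$, and for $s>0$ the outer level set is $d_\partial^{-1}(s)\cap\X^c = \partial\X_{+s}$. The coarea formula, applied separately on $\X$ and on $\X_{+\varepsilon}\setminus\X$, then yields the tube identities
\begin{equation*}
    \vol^d(\X\setminus\X_{-\varepsilon}) = \int_0^\varepsilon \mathcal{H}^{d-1}(\partial\X_{-s})\,\dd s, \qquad
    \vol^d(\X_{+\varepsilon}\setminus\X) = \int_0^\varepsilon \mathcal{H}^{d-1}(\partial\X_{+s})\,\dd s.
\end{equation*}

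The second step is a pointwise comparison of the integrands. For every $s\in[0,r_\X]$, the inclusions $\X_{-s}\subseteq\X\subseteq\X_{+s}$ hold between convex bodies. I would then invoke the classical inclusion-monotonicity of perimeter for convex bodies: if $K\subseteq L$ are convex, then $\mathcal{H}^{d-1}(\partial K)\leq\mathcal{H}^{d-1}(\partial L)$. This is a direct consequence of Cauchy's formula expressing the perimeter of a convex body as a constant multiple of the average over $u\in\mathbb{S}^{d-1}$ of the $(d-1)$-volume of its orthogonal projection onto $u^\perp$, since each such projection is obviously monotone under inclusion. Applying this twice gives $\mathcal{H}^{d-1}(\partial\X_{-s})\leq \mathcal{H}^{d-1}(\partial\X)\leq \mathcal{H}^{d-1}(\partial\X_{+s})$ for every $s\in[0,\varepsilon]$, and integrating this over $s\in[0,\varepsilon]$ combined with the tube identities concludes the proof.

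The principal subtlety is ensuring both classical tools apply in the non-smooth convex setting, since $\partial\X$ (and hence $\partial\X_{\pm s}$) need not be $\Class^1$. The coarea formula is unaffected by this limitation, as it only requires $d_\partial\in W^{1,\infty}_{\loc}(\Rsp^d)$ with $\vert\nabla d_\partial\vert=1$ a.e., both of which are standard properties of the distance function to a closed set. The Cauchy projection formula is likewise valid for arbitrary convex bodies. Once these two ingredients are in place, the argument is short and avoids any smoothness or regularity hypothesis on $\partial\X$.
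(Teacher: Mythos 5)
Your proof is correct. Note that the paper does not reproduce a proof of this lemma at all — it simply cites Matheron's work — so there is no paper argument to compare against line by line; your write-up supplies a self-contained proof.

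Your route (coarea formula for the signed distance $d_\partial$ to obtain the two tube identities, then pointwise comparison of the integrands via perimeter monotonicity for nested convex bodies) is clean and avoids the Minkowski-algebra/Steiner-polynomial machinery one might otherwise reach for. A convenient way to phrase the key observation is that the function $f(t)$ defined by $f(t)=\vol^d(\X_{+t})$ for $t\ge 0$ and $f(t)=\vol^d(\X_{-|t|})$ for $-r_\X\le t<0$ has derivative $f'(t)=\mathcal{H}^{d-1}(\partial\X_t)$ a.e.\ by your tube identities, and this derivative is nondecreasing by perimeter monotonicity of convex bodies under inclusion; the lemma is then exactly the midpoint-convexity inequality $f(\varepsilon)+f(-\varepsilon)\ge 2f(0)$. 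The two ingredients you rely on are both standard in the generality needed: $d_\partial$ is $1$-Lipschitz with unit-norm gradient at every point of differentiability off $\partial\X$ (and the nondifferentiability set — the inner medial axis and the set $\partial\X$ itself — is Lebesgue-null), and Cauchy's projection formula does give perimeter monotonicity for arbitrary convex bodies with no smoothness hypothesis. The identifications $d_\partial^{-1}(s)\cap\X=\partial\X_{-s}$ for $0<s<r_\X$ and $d_\partial^{-1}(s)\setminus\X=\partial\X_{+s}$ for $s>0$ are also correct, using that for $s<r_\X$ the erosion $\X_{-s}$ is a convex body with nonempty interior. No gaps.
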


This lemma, together with Steiner's formula already implies that $\vol^d(\X\setminus \X_{-\eta})$ grows linearly in $\eta$ for small values of $\eta$. We provide a direct proof below. 

\begin{proof}[Proof of \cref{prop:volume-boundary-slice-cvx}]
This result is proven using the radial function of $\X$, $\Lambda_\X(x) = \max \{ \lambda \geq 0 \vert \lambda x \in \X \}.$
Since $x\in\Sph^{d-1} \mapsto \Lambda_\X(x) x$ is a radial parametrization of $\partial \X$,  we have:
\begin{align*}
    \vol^d(\X) = \int_{\X} 1 \dd x =  \int_{\mathcal{S}^{d-1}} \int_0^{\Lambda_\X(u)} r^{d-1} \dd r \dd u = \frac{1}{d} \int_{\mathcal{S}^{d-1}} \Lambda_\X(u)^d \dd u.
\end{align*}
Combined with Lemma \ref{lemma:dilated-erosion}, this implies that for any $0 \leq \eta \leq r_\X$,
\begin{align*}
    \vol^d(\X \backslash \X_{-\eta}) &\leq \vol^d(\X_{+\eta} \backslash \X)
    %&= \vol^d(\X_{+\eta}) - \vol^d(\X)\\
    = \frac{1}{d} \int_{\mathcal{S}^{d-1}} \left( \Lambda_{\X_{+\eta}}(u)^d - \Lambda_\X(u)^d \right) \dd u \\
    &= \frac{1}{d} \int_{\mathcal{S}^{d-1}} \left( \Lambda_{\X_{+\eta}}(u) - \Lambda_\X(u) \right) \left( \sum_{k=0}^{d-1} \Lambda_{\X_{+\eta}}(u)^{d-1-k} \Lambda_\X(u)^{k} \right) \dd u \\
    &\leq \frac{1}{d} \int_{\mathcal{S}^{d-1}} \left( \Lambda_{\X_{+\eta}}(u) - \Lambda_\X(u) \right) d \cdot(R_\X + r_\X)^{d-1} \dd u.
\end{align*}
Using the inclusions $B(0, r_\X) \subseteq \X \subseteq B(0, R_\X)$, one can prove that for any $\eta>0$ and  for any unit vector $u$, $$0 \leq \Lambda_{\X_{+\eta}}(u) - \Lambda_\X(u) \leq \frac{(r_\X^2 + R_\X^2)^{1/2}}{r_\X} \eta \leq \frac{2 R_\X}{r_\X} \eta.$$ This can be seen from the \emph{worst case} where $\X$ is an \emph{ice cream cone} made from the convex hull of $B(0, r_\X)$ and a point at distance $R_\X$ of the origin. 
This finally gives, for $\eta \in[0,r_\X],$
\begin{align*}
    \vol^d(\X \backslash \X_{-\eta}) \leq  \int_{\mathcal{S}^{d-1}} \frac{2R_\X}{r_\X} \eta (R_\X + r_\X)^{d-1} \dd u 
    = 2 S_{d-1} (R_\X + r_\X)^{d-1} \frac{R_\X}{r_\X}  \eta. 
\end{align*} 
One can easily check that in the case $\eta \geq r_\X$ the inequality also holds.

\end{proof}

\begin{proof}[Proof of Theorem \ref{th:stability-ot-maps}] In the following, the $\lesssim$ notation hides multiplicative constants that might depend on $d,\rho,\X,\alpha, M_\alpha, M$.
We get the left  inequality of \eqref{eq:stab-ot-maps} by recalling that
$$ \Wass_2(\mu^0, \mu^1)^2 = \min_{\gamma \in \Pi(\mu^0, \mu^1)} \int_{\Rsp^d \times \Rsp^d} \nr{x - y}^2 \dd \gamma(x,y), $$
and by noticing that the optimal transport maps $\nabla \phi^0, \nabla \phi^1$ between $\rho$ and $\mu^0, \mu^1$ yield an admissible coupling $\gamma^{0,1} := (\nabla \phi^0, \nabla \phi^1)_\# \rho \in \Pi(\mu^0, \mu^1)$, which leads to:
$$ \Wass_2(\mu^0, \mu^1)^2 \leq \int_{\Rsp^d \times \Rsp^d} \nr{x - y}^2 \dd \gamma^{0,1}(x,y) = \int_\X \nr{\nabla \phi^1 - \nabla \phi^0}^2 \dd \rho. $$
We  now prove the right  inequality of \eqref{eq:stab-ot-maps}. We recall that $\eta_R = \left( \frac{M_\alpha}{R} \right)^\frac{1}{1 - \alpha}$. Then, denoting $\rho_R$ the restriction of $\rho$ to $\X_{-\eta_R}$ and $\rho_R^\perp = \rho - \rho_R$,
\begin{samepage}
\begin{align*}
%\label{eq:stability-arbitrage}
 \nr{\nabla \phi^1 - \nabla \phi^0}_{\L^2(\rho, \Rsp^d)}^2 &= \nr{\nabla \phi^1 - \nabla \phi^0 }_{\L^2(\rho_R,  \Rsp^d)}^2  + \nr{\nabla \phi^1 - \nabla \phi^0 }_{\L^2(\rho_R^\perp,  \Rsp^d)}^2.
\end{align*}
\end{samepage}
On $\X_{-\eta_R}$, Proposition \ref{prop:bound-gradient-phi-t} ensures that $\nr{\nabla \phi^k} \leq R$ for $k \in \{0, 1\}$. This fact thus ensures with Proposition \ref{prop:bound-diff-convex-fns} that for any $c \in \Rsp$:
\begin{align*}
     \nr{\nabla \phi^1 - \nabla \phi^0}_{\L^2(\rho_R,  \Rsp^d)}^2 &\lesssim R^{4/3} \nr{\phi^1 - \phi^0 - c}_{\L^2(\rho_R)}^2. %_{\L^2(\X_{-\eta_R})}^{2/3} \lesssim R^{4/3} \nr{\phi^1 - \phi^0 - c}_{\L^2(\rho)}^{2/3}.
\end{align*}
Note that we used the inequality $\Haus^{d-1}(\partial \X_{-\eta_R}) \leq \Haus^{d-1}(\partial \X)$ obtained from the inclusion of the convex set $\X_{-\eta_R}$ into $\X$, where the convexity of  $\X_{-\eta_R}$ is visible from $\X_{-\eta_R} = \bigcap_{\nr{e} = \eta_R} (\X - e)$. Minimizing over $c$ in the last inequality thus ensures
\begin{equation}
\label{eq:stability-X(R)}
    \nr{\nabla \phi^1 - \nabla \phi^0}_{\L^2(\rho_R,  \Rsp^d)}^2 \lesssim R^{4/3} \Var_\rho (\phi^1 - \phi^0)^{1/3} \lesssim R^{4/3} \Wass_1(\mu^0, \mu^1)^{1/3},
\end{equation}
where we used Corollary \ref{cor:stability-potentials} to get the second inequality. 
On the other hand, notice that
\begin{align*}
    \nr{\nabla \phi^1 - \nabla \phi^0}_{\L^2(\rho_R^\perp,  \Rsp^d)}^2 \leq 2 \nr{\nabla \phi^1}_{\L^2(\rho_R^\perp,  \Rsp^d)}^2 + 2\nr{\nabla \phi^0}_{\L^2(\rho_R^\perp,  \Rsp^d)}^2.
\end{align*}
By the Cauchy-Schwartz inequality we have for $k \in \{0, 1\}$
\begin{align*}
    \nr{\nabla \phi^k}_{\L^2(\rho_R^\perp,  \Rsp^d)}^2 &= \int_{ \X \setminus \X_{-\eta_R}} \nr{\nabla \phi^k}^2 \dd \rho \\
    &\leq \left( \int_{ \X \setminus \X_{-\eta_R}} \nr{\nabla \phi^k}^4 \dd \rho \right)^{1/2} \left( \int_{ \X \setminus \X_{-\eta_R}} 1^2 \dd \rho \right)^{1/2} \\
    &\lesssim M_4(\mu^k)^{1/2} \vol^d(\X \setminus \X_{-\eta_R})^{1/2}.
\end{align*}
\cref{prop:volume-boundary-slice-cvx} ensures that for any $R \geq 0$, we have
$$ \vol^d(\X \setminus \X_{-\eta_R}) \lesssim \eta_R = \left( \frac{M_\alpha}{R} \right)^{1/(1-\alpha)}. $$
This gives thus the estimation
\begin{equation}
\label{eq:stability-X-X(R)}
    \nr{\nabla \phi^1 - \nabla \phi^0}_{\L^2(\rho_R^\perp,  \Rsp^d)}^2 \lesssim R^{-{1/2(1-\alpha)}}
\end{equation}
Estimations \eqref{eq:stability-X(R)} and \eqref{eq:stability-X-X(R)} thus give for $R \geq 0$ 
\begin{equation}
\label{eq:stability-arbitrage-2}
    \nr{\nabla \phi^1 - \nabla \phi^0}_{\L^2(\rho, \Rsp^d)}^2 \lesssim R^{4/3} \Wass_1(\mu^0, \mu^1)^{1/3} + R^{-{1/2(1-\alpha)}}.
\end{equation}
Solving for $R^{4/3} \Wass_1(\mu^0, \mu^1)^{1/3} = R^{-{1/2(1-\alpha)}}$ yields $R =\Wass_1(\mu^0, \mu^1)^{\frac{-2(1 - \alpha)}{11 - 8\alpha}}$. Injecting this value of $R$ in \eqref{eq:stability-arbitrage-2} yields the desired estimate. % finally

\end{proof}

We finally prove that if the target measures $\mu^0, \mu^1$ are supported on a compact set $\Y \subset \Rsp^d$, if they are absolutely continuous and if their  densities are bounded away from zero and infinity, then the Hölder exponents can be slightly improved.

\begin{corollary}
\label{cor:stab-target-ac-compact-set}
Let $\X,\Y$ be compact subsets of $\Rsp^d$, and assume that $\X$ is convex and that $\Y$ has a rectifiable boundary. Let $\rho$ be a probability density over $\X$ satisfying $0 < \mrho \leq \rho \leq \Mrho < +\infty$ and let $\mu^0,\mu^1$ be probability densities over $\Y$ satisfying 
$$\forall k \in \{0, 1\}, \quad 0 < c_\mu \leq \mu^k \leq C_\mu < +\infty. $$
Then, if $\phi^k$ (resp. $T^k$) is the Brenier potential (resp. Brenier map) from $\rho$ to $\mu^k$, we have
\begin{gather*}
    \Wass_2(\mu^0, \mu^1)^6 \lesssim \Var_\rho(\phi^1 - \phi^0) \leq 2 \Var_{\frac{1}{2}(\mu^0+\mu^1)}(\psi^1 - \psi^0) \lesssim \Wass_2(\mu^0, \mu^1)^{\frac{6}{5}}, \\
    \Wass_2(\mu^0,\mu^1) \leq \nr{T^1 - T^0}_{\L^2(\rho, \Rsp^d)} \lesssim \Wass_2(\mu^0, \mu^1)^{\frac{1}{5}},
\end{gather*}
where the $\lesssim$ notation hides multiplicative constants depending on $d,\rho,\X,\Y, c_\mu$ and $C_\mu$.
\end{corollary}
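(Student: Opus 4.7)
The goal is to establish five estimates chaining $\Wass_2(\mu^0,\mu^1)$, $\Var_\rho(\phi^1-\phi^0)$, $\Var_{\mu^0+\mu^1}(\psi^1-\psi^0)$ and $\nr{\nabla(\phi^1-\phi^0)}_{\L^2(\rho)}$. Two of them are essentially free: the middle inequality $\Var_\rho(\phi^1-\phi^0) \leq \Var_{\mu^0+\mu^1}(\psi^1-\psi^0)$ is Proposition~\ref{prop:bound-lp-primal-dual}, and $\Wass_2(\mu^0,\mu^1) \leq \nr{\nabla\phi^1-\nabla\phi^0}_{\L^2(\rho)}$ follows from observing that $(\nabla\phi^0,\nabla\phi^1)_{\#}\rho$ is a coupling of $\mu^0$ and $\mu^1$. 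The real work is in the remaining three estimates.

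For the lower bound $\Wass_2(\mu^0,\mu^1)^6 \lesssim \Var_\rho(\phi^1-\phi^0)$, I apply Proposition~\ref{prop:bound-diff-convex-fns} to $\phi^0,\phi^1$ on the convex set $\X$. Since $\nabla\phi^k(x) \in \Y$ for a.e.\ $x \in \X$ and $\Y$ is compact, $\nr{\nabla\phi^k}_{\L^\infty(\X)}$ is bounded by a constant depending on $\Y$, and the Gagliardo--Nirenberg-type inequality gives $\nr{\nabla(\phi^1-\phi^0)}_{\L^2(\X)}^2 \lesssim \nr{\phi^1-\phi^0-c}_{\L^2(\X)}^{2/3}$ for any $c$. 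The two-sided density bound $m_\rho \leq \rho \leq M_\rho$ lets me swap Lebesgue and $\rho$-weighted $\L^2$ norms in both directions, so together with the Brenier coupling estimate this yields $\Wass_2^2 \leq \nr{\nabla(\phi^1-\phi^0)}_{\L^2(\rho)}^2 \lesssim \Var_\rho(\phi^1-\phi^0)^{1/3}$, which after cubing gives the desired $\Wass_2^6 \lesssim \Var_\rho(\phi^1-\phi^0)$.

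The upper bound $\Var_{\mu^0+\mu^1}(\psi^1-\psi^0) \lesssim \Wass_2(\mu^0,\mu^1)^{6/5}$ is the main technical step. Since $\mu^0,\mu^1$ are supported on the compact set $\Y$, the Brenier potentials $\phi^k$ are Lipschitz on $\X$, so assumption~\eqref{eq:hyp-phi} holds with $\Mphi-\mphi$ controlled by a constant depending only on $\X,\Y$; Theorem~\ref{th:Poincare-ineq} therefore reduces the problem to bounding $\sca{\psi^0-\psi^1}{\mu^1-\mu^0}$. To improve on the naive estimate $\lesssim \Wass_1$ (which would only yield $\lesssim \Wass_2$), I introduce the Brenier map $S$ from $\mu^0$ to $\mu^1$ and Taylor-expand $(\psi^0-\psi^1)\circ S - (\psi^0-\psi^1)$ along the segment from $y$ to $S(y)$; Cauchy--Schwarz gives
\begin{equation*}
   \sca{\psi^0-\psi^1}{\mu^1-\mu^0} \leq \Wass_2(\mu^0,\mu^1)\left(\int_0^1 \nr{\nabla(\psi^0-\psi^1)}_{\L^2(\mu_t)}^2 \dd t\right)^{1/2},
\end{equation*}
where $\mu_t = ((1-t)\id + tS)_{\#}\mu^0$ is the displacement interpolation. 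Because $S=\nabla\Phi$ for a convex $\Phi$, the standard Jacobian bounds (applied from both ends of the geodesic) give that $\mu_t$ has density at most $2^d C_\mu$ uniformly in $t$. Proposition~\ref{prop:bound-diff-convex-fns}, applied to the globally convex Lipschitz functions $\psi^0,\psi^1$ on $\Y$ with $\nr{\nabla\psi^k}_{\infty}\leq\diam(\X)$, combined with the lower density bound $\mu^0+\mu^1 \geq 2c_\mu$ on $\Y$ used to pass from $\nr{\psi^0-\psi^1-c}_{\L^2(\Y)}^2$ to $\Var_{\mu^0+\mu^1}(\psi^1-\psi^0)$, then gives $\nr{\nabla(\psi^0-\psi^1)}_{\L^2(\mu_t)}^2 \lesssim \Var_{\mu^0+\mu^1}(\psi^1-\psi^0)^{1/3}$ uniformly in $t$. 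Closing the loop through Theorem~\ref{th:Poincare-ineq} yields $\Var_{\mu^0+\mu^1}(\psi^1-\psi^0) \lesssim \Wass_2 \cdot \Var_{\mu^0+\mu^1}(\psi^1-\psi^0)^{1/6}$, which rearranges to $\Var_{\mu^0+\mu^1}(\psi^1-\psi^0) \lesssim \Wass_2^{6/5}$.

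The last estimate $\nr{\nabla\phi^1-\nabla\phi^0}_{\L^2(\rho)} \lesssim \Wass_2^{1/5}$ follows immediately: applying Proposition~\ref{prop:bound-diff-convex-fns} to $\phi^0,\phi^1$ on $\X$ exactly as in the lower-bound step gives $\nr{\nabla(\phi^1-\phi^0)}_{\L^2(\rho)}^2 \lesssim \Var_\rho(\phi^1-\phi^0)^{1/3} \leq \Var_{\mu^0+\mu^1}(\psi^1-\psi^0)^{1/3} \lesssim \Wass_2^{2/5}$. The main obstacle in this plan is the displacement-interpolation step: when $\Y$ is not convex, the support of $\mu_t$ can leak into $\Conv(\Y)\setminus\Y$ where $\mu^0+\mu^1$ vanishes, so the passage between $\nr{\psi^0-\psi^1-c}_{\L^2(\Y)}$ and the $\L^2$ norm over the actual support of $\mu_t$ requires either applying Gagliardo--Nirenberg directly on $\Y$ itself (exploiting the rectifiability of $\partial\Y$ and the global convexity of $\psi^k$) or cutting off using the Lipschitz bound $\nr{\nabla(\psi^0-\psi^1)}_\infty \leq \diam(\X)$ to handle the leak region.
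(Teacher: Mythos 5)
Your proof follows the same overall skeleton as the paper's: the middle and lower-left inequalities come from \cref{prop:bound-lp-primal-dual} and the coupling $(\nabla\phi^0,\nabla\phi^1)_\#\rho$; the bound $\Wass_2^6 \lesssim \Var_\rho(\phi^1-\phi^0)$ and the final $\nr{\nabla\phi^1-\nabla\phi^0}\lesssim\Wass_2^{1/5}$ both come from applying \cref{prop:bound-diff-convex-fns} on $\X$; and the central estimate $\Var_{\mu^0+\mu^1}(\psi^1-\psi^0)\lesssim\Wass_2^{6/5}$ comes from closing a self-improving loop through \cref{th:Poincare-ineq}, the key inequality $\sca{\psi^0-\psi^1}{\mu^1-\mu^0}\lesssim\nr{\nabla(\psi^1-\psi^0)}_{\L^2}\Wass_2$, and \cref{prop:bound-diff-convex-fns} on $\Y$. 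Your exponent bookkeeping ($V \lesssim \Wass_2\,V^{1/6}$ giving $V\lesssim\Wass_2^{6/5}$) matches the paper's exactly.

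The one place where you deviate is the key inequality in the previous sentence. The paper does not prove it: it cites it as \cref{lemma:ineq-cs-grad-W2} (Lemma~3.5 of Maury--Roudneff-Chupin--Santambrogio), which gives $\int_\Y f\,\dd(\mu^1-\mu^0)\le\sqrt{C_\mu}\,\nr{\nabla f}_{\L^2(\Y)}\Wass_2(\mu^0,\mu^1)$ directly, with the $\L^2$ norm over $\Y$ and no intermediate interpolation appearing in the statement. You instead try to re-derive it via displacement interpolation, and — as you yourself flag in your final paragraph — this runs into the genuine issue that $\mu_t = ((1-t)\id+tS)_\#\mu^0$ can leak into $\Conv(\Y)\setminus\Y$ when $\Y$ is not convex. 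Your proposed fixes are not carried out: ``applying Gagliardo--Nirenberg directly on $\Y$'' does not address the fact that $\nr{\nabla(\psi^0-\psi^1)}_{\L^2(\mu_t)}$ integrates over the leak region, and ``cutting off with the Lipschitz bound'' would require a quantitative estimate on $\mu_t(\Conv(\Y)\setminus\Y)$, which you do not produce and which does not appear easy to obtain. So as written the step $\nr{\nabla(\psi^0-\psi^1)}^2_{\L^2(\mu_t)}\lesssim V^{1/3}$ is not justified for general rectifiable-boundary $\Y$. The clean fix is simply to invoke \cref{lemma:ineq-cs-grad-W2} as the paper does, which packages the interpolation argument and hands you exactly the norm $\nr{\nabla f}_{\L^2(\Y)}$ that \cref{prop:bound-diff-convex-fns} wants. (It is worth noting, though, that your worry is a legitimate one: the quoted lemma is itself proven by displacement interpolation, so one should in principle check the hypotheses under which it holds for non-convex $\Y$; the paper passes over this silently.)
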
   
This corollary will be a consequence of the following lemma from \cite{bertr2010macroscopic}, which we will use as a replacement of the Kantorovich-Rubinstein inequality.

\begin{lemma}[Lemma 3.5 in \cite{bertr2010macroscopic}]
\label{lemma:ineq-cs-grad-W2}
Assume that $\mu^0$ and $\mu^1$ are absolutely continuous measures on the compact $\Y$, whose densities are bounded by a common constant $C_\mu$. Then, for any function $f \in H^1(\Y)$, we have the following inequality:
$$ \int_\Y f \dd(\mu^1 - \mu^0) \leq \sqrt{C_\mu} \| \nabla f \|_{\L^2(\Y)} \Wass_2(\mu^0, \mu^1). $$
\end{lemma}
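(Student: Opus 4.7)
The plan is to argue along McCann's displacement interpolation between $\mu^0$ and $\mu^1$ and apply Cauchy-Schwarz. I first establish the inequality for smooth $f$ and extend to $f \in H^1(\Y)$ by density. Since $\mu^0$ is absolutely continuous, Brenier's theorem produces a unique optimal transport map $T$ pushing $\mu^0$ to $\mu^1$, so that $\gamma := (\id, T)_\# \mu^0 \in \Pi(\mu^0, \mu^1)$ realizes $\Wass_2(\mu^0, \mu^1)^2 = \int \nr{y - x}^2 \dd \gamma(x, y)$. For $t \in [0, 1]$, set $T_t := (1-t)\id + t T$ and define the displacement interpolant $\mu_t := (T_t)_\# \mu^0$.

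For $f \in \Class^1$ on a neighborhood of $\Conv(\Y)$, the fundamental theorem of calculus yields
\begin{equation*}
f(y) - f(x) = \int_0^1 \nabla f\bigl((1-t) x + t y \bigr) \cdot (y - x) \dd t.
\end{equation*}
Integrating against $\gamma$, applying Cauchy-Schwarz in $\L^2(\gamma)$ pointwise in $t$, and using the change of variables $\int g\bigl((1-t)x + ty\bigr) \dd \gamma(x,y) = \int g \dd \mu_t$, one obtains
\begin{equation*}
\int_\Y f \dd(\mu^1 - \mu^0) \leq \Wass_2(\mu^0, \mu^1) \int_0^1 \nr{\nabla f}_{\L^2(\mu_t)} \dd t.
\end{equation*}

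The crux of the argument is the $\L^\infty$ bound $\nr{\mu_t}_\infty \leq C_\mu$ for every $t \in [0, 1]$, a classical consequence of McCann's displacement convexity. Since $T = \nabla \phi$ for a convex potential, the Jacobian $\DD T_t(x) = (1-t) I + t\, \DD T(x)$ is symmetric positive semi-definite, and concavity of $A \mapsto \det(A)^{1/d}$ on this cone yields $\det(\DD T_t)^{1/d} \geq (1-t) + t \det(\DD T)^{1/d}$. Combining this with the Monge-Ampere identity $\det(\DD T(x)) = \mu^0(x)/\mu^1(T(x))$ and the change of variables $\mu_t(T_t(x)) \det(\DD T_t(x)) = \mu^0(x)$, an AM-HM manipulation gives
\begin{equation*}
\mu_t(T_t(x)) \leq \bigl[(1-t)\, \mu^0(x)^{1/d} + t \, \mu^1(T(x))^{1/d}\bigr]^d \leq C_\mu.
\end{equation*}
This bound yields $\nr{\nabla f}_{\L^2(\mu_t)}^2 \leq C_\mu \nr{\nabla f}_{\L^2}^2$, and since the $t$-integrand is then independent of $t$, the desired inequality follows for smooth $f$.

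For general $f \in H^1(\Y)$, one extends $f$ to a compactly supported $H^1$ function on $\Rsp^d$ (using that $\Y$ has a rectifiable boundary in the regime where the lemma is applied) and invokes density of smooth functions in $H^1$ to pass to the limit. The main obstacle is the $\L^\infty$ control of $\mu_t$ along the displacement interpolant: this is where the $\Wass_2$ structure (rather than $\Wass_1$) is crucially exploited, through the Monge-Ampere equation and the concavity of $\det^{1/d}$.
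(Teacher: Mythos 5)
Your argument reproduces the standard proof of this lemma — and, I believe, the one in the cited reference — via McCann's displacement interpolation, a pointwise Cauchy--Schwarz step in $t$, and the $\L^\infty$ bound on the interpolating density, which follows from concavity of $\det^{1/d}$ on the cone of positive semi-definite matrices. The chain of estimates and the constant $\sqrt{C_\mu}$ are correct.

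One point needs repair. The interpolant $\mu_t=(T_t)_\#\mu^0$ is supported in $\Conv(\Y)$, not in $\Y$; your estimate therefore gives $\nr{\nabla f}_{\L^2(\Conv(\Y))}$, which equals $\nr{\nabla f}_{\L^2(\Y)}$ only when $\Y$ is convex. Without convexity the lemma as stated is in fact false: take $\Y$ a disjoint union of two balls, $\mu^0,\mu^1$ uniform on each ball respectively, and $f\in H^1(\Y)$ locally constant with distinct values on the two components; then $\nabla f\equiv 0$ on $\Y$ while $\int_\Y f\,\dd(\mu^1-\mu^0)\neq 0$. The original Lemma~3.5 in \cite{bertr2010macroscopic} is stated for a convex domain, and your proof inherits that hypothesis at the step where $\nabla f$ is evaluated at $(1-t)x+ty$; you half-notice this by requiring $f\in\Class^1$ on a neighborhood of $\Conv(\Y)$, but you do not reconcile it with the claimed bound over $\Y$. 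Either add the convexity assumption on $\Y$, or observe that in the application the function $f$ is a difference of globally defined Lipschitz dual potentials, so that the $\L^2$ norm can legitimately be taken over $\Conv(\Y)$.

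Two smaller technical remarks: the Monge--Amp\`ere change of variables for $T=\nabla\phi$ and the Jacobian bound for $T_t=(1-t)\id+tT$ should be understood in the Alexandrov almost-everywhere sense (McCann's interpolation theorem), since $\phi$ is not $\Class^2$ in general; and the passage from smooth to $H^1$ functions relies on a Sobolev extension, for which rectifiability of $\partial\Y$ alone does not suffice — this is harmless in the present paper, where the relevant $f$ is globally Lipschitz, but it should not be waved through as a routine density argument in general.
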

\begin{proof}[Proof of Corollary \ref{cor:stab-target-ac-compact-set}]
Because $\Y$ is compact, the Brenier potentials $\phi^0, \phi^1$ are $R_\Y$-Lipschitz continuous for any $R_\Y \in \Rsp_+$ such that $\Y \subset B(0, R_\Y)$. One can thus find $m_\phi, M_\phi \in \Rsp$ such that for $k \in \{0,1\}, m_\phi \leq \phi^k \leq M_\phi$ on $\X$ and $M_\phi - m_\phi \leq R_\Y \diam(\X)$. Setting $\psi^0 = (\phi^0)^*, \psi^1 = (\phi^1)^*$, we thus have from \eqref{eq:Poincaré-psi} in Theorem \ref{th:Poincare-ineq}:
\begin{equation}
\label{eq:Poincaré-psi-bis}
\Var_{\frac{1}{2}(\mu^0+\mu^1)} (\psi^1 - \psi^0) \lesssim \sca{\psi^0 - \psi^1}{\mu^1 - \mu^0}.
\end{equation}
For $c \in \Rsp$ such that $\nr{\psi^1 - \psi^0 - c}_{\L^2(\frac{1}{2}(\mu^0+\mu^1))}^2 = \Var_{\frac{1}{2}(\mu^0+\mu^1)}(\psi^1 - \psi^0)$, estimation \eqref{eq:Poincaré-psi-bis} and Lemma \ref{lemma:ineq-cs-grad-W2} ensure that:
\begin{equation}
    \label{eq:step-1}
    \nr{\psi^1 - \psi^0 - c}_{\L^2(\frac{1}{2}(\mu^0+\mu^1))}^2 \lesssim \nr{\nabla \psi^1 - \nabla \psi^0}_{\L^2(\Y)} \Wass_2(\mu^0, \mu^1).
\end{equation}
But Proposition \ref{prop:bound-diff-convex-fns} applied to the convex and Lipschitz functions $\psi^0 + c, \psi^1$ ensures that
$$ \nr{\nabla \psi^1 - \nabla \psi^0}_{\L^2(\Y)} \lesssim \nr{\psi^1 - \psi^0 - c}_{\L^2(\frac{1}{2}(\mu^0+\mu^1))}^{1/3}.$$
Injecting this estimation into \eqref{eq:step-1} yields

\begin{equation*}
    \nr{\psi^1 - \psi^0 - c}_{\L^2(\frac{1}{2}(\mu^0+\mu^1))}^{2} \lesssim  \Wass_2(\mu^0, \mu^1)^{6/5}.
\end{equation*}
This gives thus with Proposition \ref{prop:bound-lp-primal-dual}
\begin{equation*}
\Var_\rho(\phi^1 - \phi^0) \leq 2 \Var_{\frac{1}{2}(\mu^0+\mu^1)} (\psi^1 - \psi^0) \lesssim \Wass_2(\mu^0, \mu^1)^{6/5}.
\end{equation*}
Finally, a last use of Proposition \ref{prop:bound-diff-convex-fns} also ensures that under these assumptions on the targets $\mu^0, \mu^1$ we have
\begin{align*} %\Wass_2(\mu^0,\mu^1) \leq 
    \Wass_2(\mu^0,\mu^1) \leq  \|\nabla \phi^1 - \nabla \phi^0 \|_{\L^2(\rho, \Rsp^d)} &\lesssim \Var_\rho(\phi^1 - \phi^0)^{\frac{1}{6}} \lesssim \Wass_2(\mu^0, \mu^1)^{\frac{1}{5}}. \qedhere 
\end{align*}
\end{proof}

\section{Gagliardo–Nirenberg type inequality for difference of convex functions}
\label{sec:ineg-convex-functions}

We prove here Proposition \ref{prop:bound-diff-convex-fns}, a sort of reverse Poincaré inequality which allows to control the $\L^2$ distance $\nr{\nabla u - \nabla v}_{\L^2(K, \Rsp^d)}$ between the gradients of Lipschitz convex functions $u,v$ using the $\L^2$ distance beween these functions $\nr{u - v}_{\L^2(K)}$. This proposition is a refinement of Theorem~3.5 in \cite{chazal2010boundary}, in which the upper bound involved the uniform distance $ \nr{u - v}_{\infty}$. 
Proposition~\ref{prop:bound-diff-convex-fns} is first proven in dimension $d=1$ and on a segment (\cref{lemma:bound-diff-convex-fns}) and then generalized to higher dimensions using arguments from integral geometry.% (\cref{lemma:int-geom}). 

\begin{remark}[Relation to the Gagliardo–Nirenberg inequality] 
Although the estimate of Proposition \ref{prop:bound-diff-convex-fns} resembles the Gagliardo–Nirenberg inequality, it cannot be deduced form it. More precisely, we note that without convexity of $u$ and $v$, the inequality in \eqref{eq:gn} does not hold.
One can see this by taking $u=0$ and $v_n(x) = \frac1n \sin(nx)$ on $K = [0,1]$. 
\end{remark}

\begin{remark}[Optimality of exponents]
The inequality proposed in Proposition \ref{prop:bound-diff-convex-fns} is sharp in term of the exponents of $L$ and $\nr{u - v}_{\L^2(K)}$ in the right-hand side. In the case $d=1$, let $L >0, \varepsilon > 0$ and define on $K = [0,1]$, $ u(x) =  L |x - \frac{1}{2}|$ and $v=\max(u,\eps)$.
Then $u, v$ are convex and $L$-Lipschitz and we have:
\begin{gather*}
    \nr{u - v}^2_{\L^2([0, 1])} = \frac{2}{3} \frac{\varepsilon^3}{L} \quad \text{and} \quad \nr{u' - v'}^2_{\L^2([0, 1])} = 2 L \varepsilon.
\end{gather*}
so that $\nr{u' - v'}^2_{\L^2([0, 1])} = 12^{1/3} L^{4/3} \nr{u - v}^{2/3}_{\L^2([0, 1])}.$
\end{remark}

\begin{lemma}
\label{lemma:bound-diff-convex-fns}
Let $I \subset \Rsp$ be a compact segment and let $u, v : I \to \Rsp$ be two convex functions with uniformly bounded gradients on $I$. Then
\begin{equation}
    \label{eq:stability-convex-fn-1d}
    \nr{u' - v'}_{\L^2(I)}^2 \leq 8 ( \nr{u'}_{\L^\infty(I)} + \nr{v'}_{\L^\infty(I)} )^{4/3} \nr{u - v}_{\L^2(I)}^{2/3}. 
\end{equation}
\end{lemma}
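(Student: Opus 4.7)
The plan is to set $w = u - v$, $L = \nr{u'}_{\L^\infty(I)} + \nr{v'}_{\L^\infty(I)}$, $I = [a,b]$, $\ell = b-a$, and $M = \nr{w}_{\L^\infty(I)}$, and then to proceed in two steps: an $\L^\infty$-based bound $\int_I (w')^2\,\dd x \leq 4LM$ coming from integration by parts (this is where convexity enters), followed by an interpolation between $\L^\infty$ and $\L^2$ that uses only that $w$ is $L$-Lipschitz.

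For the first step, I would first reduce to the case $u, v \in \Class^2(I)$ by a standard mollification preserving convexity and the gradient bounds. Since $u, v$ are convex, $u'', v'' \geq 0$, and integration by parts gives
\begin{equation*}
\int_a^b (w')^2\,\dd x \;=\; \bigl[w\, w'\bigr]_a^b \;-\; \int_a^b w\, w''\,\dd x .
\end{equation*}
Writing $|w''| \leq u'' + v''$ and using $\int_I (u'' + v'')\,\dd x = (u'(b) - u'(a)) + (v'(b) - v'(a)) \leq 2L$, together with $|w'| \leq L$ at the boundary, this yields the claimed bound $\int_I (w')^2\,\dd x \leq 4LM$.

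For the interpolation step, pick $x_0 \in I$ with $|w(x_0)| = M$ and distinguish two cases. If $M \leq 2L\ell$ (Case A), the set $\{x \in I : |w(x)| \geq M/2\}$ contains $I \cap [x_0 - M/(2L),\, x_0 + M/(2L)]$, whose length is at least $M/(2L)$ (even if $x_0$ is near an endpoint of $I$, one side remains of length $\geq M/(2L)$ since $M/(2L) \leq \ell$). Therefore $\nr{w}_{\L^2(I)}^2 \geq (M/2)^2\cdot M/(2L) = M^3/(8L)$, i.e. $M \leq (8L)^{1/3}\nr{w}_{\L^2(I)}^{2/3}$, which combined with Step~1 gives
\begin{equation*}
\int_I (w')^2\,\dd x \;\leq\; 4L\cdot (8L)^{1/3}\nr{w}_{\L^2(I)}^{2/3} \;=\; 8\,L^{4/3}\nr{w}_{\L^2(I)}^{2/3}.
\end{equation*}
If $M > 2L\ell$ (Case B), then $|w(x) - w(x_0)| \leq L\ell < M/2$ for every $x \in I$, so $|w| \geq M/2$ on all of $I$ and $\nr{w}_{\L^2(I)}^2 \geq M^2\ell/4$; combining this with $L\ell \leq M/2$ gives $\ell \leq (\nr{w}_{\L^2(I)}/L)^{2/3}$, and the trivial bound $\int_I (w')^2\,\dd x \leq L^2 \ell$ then yields $\int_I (w')^2\,\dd x \leq L^{4/3}\nr{w}_{\L^2(I)}^{2/3}$, which is stronger than needed.

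The main obstacle is essentially technical: justifying the integration by parts for merely convex Lipschitz functions, for which $u''$ and $v''$ are only non-negative Radon measures. A mollification argument, with the mollified potentials inheriting convexity and the $\L^\infty$ gradient bound, handles this; the boundary term passes to the limit using uniform convergence on $I$ and the one-sided derivatives $u'(a^+), u'(b^-)$ and similarly for $v$. The geometric case split $M \lessgtr 2L\ell$ then takes care of all endpoint effects in the interpolation.
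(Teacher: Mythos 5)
Your proposal is correct and follows essentially the same strategy as the paper: an integration-by-parts step exploiting $u'',v''\geq 0$ to obtain $\int_I (u'-v')^2\,\dd x \leq 4(\nr{u'}_\infty+\nr{v'}_\infty)\nr{u-v}_\infty$, followed by an interpolation $\nr{u-v}_\infty \lesssim L^{1/3}\nr{u-v}_{\L^2}^{2/3}$ using only the Lipschitz bound, with a case split to handle the situation where the maximal deviation is large relative to $L\cdot|I|$. The only cosmetic difference is that you carry out the case split directly on $[a,b]$ via $M\lessgtr 2L\ell$, whereas the paper first normalizes to $I=[0,1]$ (case split $\eps\lessgtr 2L$) and then rescales by an affine change of variables; the two are equivalent.
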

\begin{proof}
We first assume that $I = [0, 1]$. Using a simple approximation, we may assume that $u,v$ are $\Class^2$ on $I$ to get the following integration by part:
\begin{align*}
     \nr{u' - v'}_{\L^2([0, 1])}^2 &=  [(u - v)(u' - v')]_0^1 - \int_{[0, 1]} (u - v)(u'' - v''). %\int_{[0, 1]} (u' - v')^2 
\end{align*}
The convexity hypothesis then allows to get a $\L^\infty$ estimate. Indeed, 
\begin{align*}
\abs{ [(u - v)(u' - v')]_0^1 } \leq 2(\nr{u'}_{\L^\infty} + \nr{v'}_{\L^\infty}) \nr{u - v}_{\L^\infty},
\end{align*}
and by convexity
\begin{align*}
    \abs{ \int_{[0, 1]} (u - v)(u'' - v'') } &\leq \nr{u - v}_{\L^\infty} \left( \int_{[0, 1]} \abs{u''} + \int_{[0, 1]} \abs{v''} \right) \\
    &=  \nr{u - v}_{\L^\infty} \left( \int_{[0, 1]} u'' + \int_{[0, 1]} v'' \right) \\
    &\leq 2(\nr{u'}_{\L^\infty} + \nr{v'}_{\L^\infty}) \nr{u - v}_{\L^\infty}.
\end{align*}
This gives
\begin{align}
\label{eq:bound-cvx-functions-1d}
     \nr{u' - v'}_{\L^2([0, 1])}^2 &\leq 4(\nr{u'}_{\L^\infty} + \nr{v'}_{\L^\infty}) \nr{u - v}_{\L^\infty}.
\end{align}
We now  bound the $\L^\infty$ norm of $u-v$ with its $\L^2$ norm using that the Lipschitz constant of  $u-v$ is less than 
$L = \nr{u'}_{\L^\infty} + \nr{v'}_{\L^\infty}$. 
Let $\epsilon = \nr{u-v}_{\L^\infty}$ and let $x^* \in [0, 1]$ where the maximum of $\abs{u-v}$ is attained. Since $\Lip(u-v)\leq L$, one gets 
$\abs{u(x)-v(x)} \geq \frac{\eps}{2}$ on the interval $I_* = I \cap [x^* - \frac{\eps}{2L}, x^* + \frac{\epsilon}{2L}].$
%If $\eps \geq 2L$, we have 
%$w\geq \eps - L \geq \frac{\eps}{2}$ on $I$, so that 
%$$\nr{w}_{\L^2([0,1]} \geq \eps.$$
%If on the other hand $\eps \leq 2L$, 
The length of $I_*$ is at least $\min(\frac{\eps}{2L},1)$, so that
\begin{equation} \label{eq:compL2linf}
\nr{u - v}_{\L^2([0, 1])}^2 \geq \frac{1}{4}\min(\frac{\epsilon}{2L}, 1) \eps^2.
\end{equation}
Assume first that $\eps \leq 2L$. Then, equation~\eqref{eq:compL2linf} gives
$\eps^3 = \nr{u - v}_\infty^3 \leq 8L \nr{u - v}_{\L^2([0, 1])}^2$, thus implying

\begin{equation*}
    \nr{u - v}_{\L^\infty} \leq 2 ( \nr{u'}_{\L^\infty} + \nr{v'}_{\L^\infty} )^{1/3} \nr{u - v}_{\L^2([0, 1])}^{2/3}.
\end{equation*}
This gives, with equation \eqref{eq:bound-cvx-functions-1d}:
\begin{align}
\label{eq:stability-convex-fn-1d-[0,1]}
    \nr{u' - v'}_{\L^2([0, 1])}^2 &\leq 8 ( \nr{u'}_{\L^\infty} + \nr{v'}_{\L^\infty} )^{4/3} \nr{u - v}_{\L^2([0, 1])}^{2/3}.
\end{align}
On the other hand, if $\eps\geq 2L$, then $\nr{u - v}_{\L^2([0, 1])} \geq \frac{\eps}{2}$  by equation~\eqref{eq:compL2linf},
so that  
$$ 8 ( \nr{u'}_{\L^\infty} + \nr{v'}_{\L^\infty} )^{4/3} \nr{u - v}_{\L^2([0, 1])}^{2/3} \geq 8 L^{4/3} \left(\frac{\eps}{2}\right)^{2/3} \geq L^{4/3+2/3} = L^2,$$
which allows to conclude using $L^2 \geq \nr{u' - v'}_{\L^2([0,1])}^2$.
We get inequality \eqref{eq:stability-convex-fn-1d} for a general interval $I = [a, b]$ by an affine change of variable.

\end{proof}

The one-dimensional result from Lemma \ref{lemma:bound-diff-convex-fns} is generalized to higher dimensions thanks to two formulas from integral geometry that allow to rewrite the $\L^2$ norms of the scalar-field $u-v$ and vector-field $\nabla u - \nabla v$ over  set $K \subset \Rsp^d$ using integrals over lines intersecting  $K$.

\subsection*{Integral geometry} Denote $\Sph^{d-1}$ the unit sphere in $\Rsp^d$, and let $\sigma$ be the uniform probability measure on it. We denote $\{e\}^\perp$ the hyperplane orthogonal to a unit vector $e$. Using a simple change of variable formula, we note that for any square-integrable function $f$ on $\Rsp^d$ and any square-integrable vector field $F$ on $\Rsp^d$, one has for any $e \in \Sph^{d-1}$
\begin{align*}
    \int_{\Rsp^d} f(x)^2 \dd x &= \int_{\{e\}^\perp}\int_\Rsp f(y+te)^2 \dd t \dd \Haus^{d-1}(y),\\
    \int_{\Rsp^d} \sca{F(x)}{e}^2 \dd x &= \int_{\{e\}^\perp}\int_\Rsp \sca{F(y+te)}{e}^2 \dd t \dd \Haus^{d-1}(y),
\end{align*}
where $\Haus^k$ denotes the $k$-th dimensional Hausdorff measure.
Integrating these equalities over $e\in \Sph^{d-1}$ one gets
\begin{align*} 
\int_{\Rsp^d} f(x)^2 \dd x &= \int_{e\in\Sph^{d-1}} \left[\int_{\{e\}^\perp}\int_\Rsp  f(y+te)^2 \dd t \dd\Haus^{d-1}(y) \right]\dd \sigma(e) \\ %\label{eq:int-geom-scalar-field}\\
\int_{\Rsp^d} \nr{F(x)}^2 \dd x &= C_d \int_{e\in\Sph^{d-1}} 
\left[\int_{\{e\}^\perp}\int_\Rsp  \sca{F(y+te)}{e}^2 \dd t \dd\Haus^{d-1}(y) \right]\dd \sigma(e).% \label{eq:int-geom-vector-field}
\end{align*}
To get the second equality, in addition to the change of variable, we used  Fubini's theorem and the existence of a dimensional constant $C_d$ such that for any vector $V\in\Rsp^d$,
$$ C_d \int_{e\in \Sph^{d-1}} \sca{V}{e}^2 \dd \sigma(e) =  \nr{V}^2. $$
Finally, we will rely on  Crofton's formula -- see for instance the first paragraph of Chapter 5 in \cite{weil-hug} -- which states that for any $\mathcal{H}^{d-1}$-rectifiable subset $S$ of $\Rsp^d$ the $(d-1)$-dimensional Hausdorff measure of $S$ is proportional to the average number of intersections of a line with $S$. More precisely, there exists a dimensional constant $C'_d$ such that 
\begin{equation}
    \label{eq:crofton-d-1}
    \mathcal{H}^{d-1}(S) = C'_d \int_{e\in\Sph^{d-1}} 
    \int_{\{e\}^\perp} \#((y + \Rsp e) \cap S) \dd \Haus^{d-1}(y) \d\sigma(e),
\end{equation}
where $\# X$ is the cardinality of the set $X$. We are now ready to prove the Gagliargo-Nirenberg type inequality of Proposition \ref{prop:bound-diff-convex-fns}.

\begin{proof}[Proof of Proposition \ref{prop:bound-diff-convex-fns}]

For any $e \in \Sph^{d-1}$ and $y \in \{e\}^\perp$, denote $\ell_e^y$ the oriented line $y + e\Rsp$. Then, denoting $u_{\ell^y_e} = u|_{\ell^y_e \cap K}, v_{\ell^y_e} = v|_{\ell^y_e  \cap K}$, we know by the previous paragraph, setting $F=\nabla u - \nabla v$, that there is a dimensional constant $C_d$ such that:
\begin{equation*}
    \nr{\nabla u - \nabla v}_{\L^2(K, \Rsp^d)}^2 = C_d \int_{e\in\Sph^{d-1}} 
\int_{\{e\}^\perp}  \nr{u_{\ell^y_e}' - v_{\ell^y_e}'}^2_{\L^2(\ell^y_e \cap K)}  \dd\Haus^{d-1}(y) \dd \sigma(e).
\end{equation*}
Given any oriented line $\ell^y_e$, denote $n_{\ell^y_e} \in \Nsp\cup\{+\infty\}$ the number of connected components of $\ell^y_e\cap K$. Then, 
$n_{\ell^y_e} \leq \#(\ell^y_e \cap \partial K)$ so  that by Crofton's formula,% \eqref{eq:crofton-d-1}, 
\begin{equation*}
    \int_{e \in \Sph^{d-1}} \int_{ \{e\}^\perp } n_{\ell^y_e} \dd\Haus^{d-1}(y) \dd \sigma(e) < +\infty.
\end{equation*}
This implies that for almost every $e \in \Sph^{d-1}$ and $y \in \{e\}^\perp$, the set $\ell^y_e \cap K$ may be decomposed as a finite union of $n_{\ell^y_e}$ segments, i.e. $\ell^y_e \cap K = \bigcup_{i = 1}^{n_{\ell^y_e}} I_{\ell^y_e}^i$. This gives
\begin{align*}
     \nr{u_{\ell^y_e}' - v_{\ell^y_e}'}_{\L^2(\ell^y_e \cap K)}^2 &= \sum_{i=1}^{n_{\ell^y_e}} \nr{u_{\ell^y_e}' - v_{\ell^y_e}'}_{\L^2(I_{\ell^y_e}^i)}^2 ,\\
     \nr{u_{\ell^y_e} - v_{\ell^y_e}}_{\L^2(\ell^y_e \cap K)}^2 &= \sum_{i=1}^{n_{\ell^y_e}} \nr{u_{\ell^y_e} - v_{\ell^y_e}}_{\L^2(I_{\ell^y_e}^i)}^2.
\end{align*}
Lemma \ref{lemma:bound-diff-convex-fns} combined with Jensen's inequality then ensure that we have for almost every $e \in \Sph^{d-1}$ and $y \in \{e\}^\perp$:
\begin{align*}
    \nr{u_{\ell^y_e}' - v_{\ell^y_e}'}_{\L^2(\ell^y_e \cap K)}^2 &\leq 8 (2 L)^{4/3} \sum_{i=1}^{n_{\ell^y_e}} \nr{u_{\ell^y_e} - v_{\ell^y_e}}_{\L^2(I_{\ell^y_e}^i)}^{2/3} \\
    &\leq  8 ( 2 L )^{4/3}  n_{\ell^y_e}^{2/3} \nr{u_{\ell^y_e} - v_{\ell^y_e}}_{\L^2(\ell^y_e \cap K)}^{2/3}.
\end{align*}
The quantity $\nr{\nabla u - \nabla v}_{\L^2(K, \Rsp^d)}^2$ is thus upper bounded by the integral
\begin{equation*}
    8 C_d (2L)^{4/3} \int_{e\in\Sph^{d-1}} 
\int_{\{e\}^\perp} n_{\ell^y_e}^{2/3} \nr{u_{\ell^y_e} - v_{\ell^y_e}}_{\L^2(\ell^y_e \cap K)}^{2/3}  \dd\Haus^{d-1}(y) \dd \sigma(e).
\end{equation*}
But Hölder's inequality together with the change of variable formula for $\nr{u - v}_{\L^2(K)}$ give
\begin{align*}
     \int_{e\in\Sph^{d-1}} &\int_{\{e\}^\perp} n_{\ell^y_e}^{2/3} \nr{u_{\ell^y_e} - v_{\ell^y_e}}_{\L^2(\ell^y_e \cap K)}^{2/3} \dd\Haus^{d-1}(y) \dd \sigma(e) \\
& \leq \left( \int_{e\in\Sph^{d-1}} \int_{\{e\}^\perp}  n_{\ell^y_e} \dd\Haus^{d-1}(y) \dd \sigma(e) \right)^{2/3} \nr{u - v}_{\L^2(K)}^{2/3}.
\end{align*}

The conclusion comes after using again that  $n_{\ell^y_e} \leq \#(\ell^y_e \cap \partial K)$ and  Crofton's formula \eqref{eq:crofton-d-1}
\begin{equation*}
    \int_{e\in\Sph^{d-1}} \int_{\{e\}^\perp}  n_{\ell^y_e} \dd\Haus^{d-1}(y) \dd \sigma(e) \leq \frac{1}{C_d'} \mathcal{H}^{d-1}(\partial K). \qedhere
\end{equation*}

\end{proof}

\subsection*{Acknowledgement} 
The authors warmly thank Bo'az Klartag, Max Fathi, Hugo Lavenant, Filippo Santambrogio, and Dorian Le Peutrec for interesting discussions related 
to this article. They acknowledge the support of the Agence national de la recherche through the project MAGA (ANR-16-CE40-0014).

\bibliographystyle{plain}
\bibliography{ref}

\end{document}